\newcommand\DistTo{\xrightarrow{
   \,\smash{\raisebox{-0.65ex}{\ensuremath{\scriptstyle\sim}}}\,}}
\newtheorem{theo}{Theorem}
\newtheorem{lem}[theo]{Lemma}
\newtheorem{prop}[theo]{Proposition}
\newtheorem{defi}[theo]{Definition}
\newtheorem{defi-prop}[theo]{Definition-Proposition}
\newtheorem{re}[theo]{Remark}
\newtheorem{ex}[theo]{Example}
\newtheorem{formu}[theo]{Formula}
\newtheorem{note}[theo]{Notation}
\numberwithin{theo}{section}
\newenvironment{altenumerate}
   {\begin{list}
      {\textup{(\theenumi)} }
      {\usecounter{enumi}
       \setlength{\labelwidth}{0pt}
       \setlength{\labelsep}{0pt}
       \setlength{\leftmargin}{0pt}
       \setlength{\itemsep}{\the\smallskipamount}
       \renewcommand{\theenumi}{\roman{enumi}}
      }}
   {\end{list}}
\newenvironment{altitemize}
   {\begin{list}
      {$\bullet$}
      {\setlength{\labelwidth}{0pt}
	   \setlength{\itemindent}{5pt}
       \setlength{\labelsep}{5pt}
       \setlength{\leftmargin}{0pt}
       \setlength{\itemsep}{\the\smallskipamount}
      }}
   {\end{list}}
  \sbox\z@{\let\label\@gobble$\displaystyle\BODY$}
\begin{document}

\author{XIAOHUA AI}

\address{Max-Planck-Institut für Mathematik, Vivatsgasse 7,
53111 Bonn, Germany}
\email{xiaohua.ai@mpim-bonn.mpg.de}

\subjclass[2010]{Primary 11M06, 11M32; Secondary 11F03, 11F55, 11R42}
\keywords{The Hecke formula; L-functions; Multiple zeta values; Multiple polylogarithms; Green currents; Hodge correlators; Plectic principle}
\date{\today}

\title{GENERALIZED MULTIPLE ZETA VALUES OVER NUMBER FIELDS  I}

\begin{abstract}
Inspired by the theory of Hodge correlators due to Goncharov and by the plectic principle of 
Nekov\'a\v{r} and Scholl, we construct higher plectic Green functions and give a higher order generalization of Hecke's formula for abelian $L$-functions over arbitrary number fields. We hence provide a potential method to generalize multiple zeta values over number fields. We recover classical multiple zeta values and multiple polylogrithms evaluated at roots of unity,  when the number field in consideration is the rational field $\mathbb{Q}$.
\end{abstract}

\maketitle
\tableofcontents

\section{Introduction}
   The objective of this article is to give a potential definition of multiple \lq\lq Dedekind\rq\rq\ zeta values over an arbitrary number field. Classically, the Riemann zeta function is defined as 
\[\zeta(s) = \sum_{\substack{n >0 \\ n\in\mathbb{Z}}}\frac{1}{n^{s}}, \quad \mathrm{Re}(s) > 1.\]
The Riemann zeta function is a special case of the Dirichlet $L$-function associated to the trivial character. There are two directions to generalize the Riemann zeta function. Instead of considering one variable, we have the multiple zeta function defined as
\[\zeta(s_1, \ldots, s_k) = \sum_{\substack{0 < n_1< n_2 < \cdots < n_k\\ n_j \in \mathbb{N} }}\frac{1}{n_1^{s_1}\cdots n_k^{s_k}}, \quad s_j \in \mathbb{C}\]
which converges when $\mathrm{Re}(s_j) + \ldots + \mathrm{Re}(s_k) >k- j +1 $ for all $j$. 
When $s_1, \ldots, s_k$ are all positive integers (with $s_k > 1$) these sums are often called multiple zeta values (MZVs). The $k$ in the definition is called the depth (or length) of a MZV, and the sum 
$m = \sum^{k}_{j=1} s_j$ is called the weight.

   The second direction of generalization is to replace the rational field $\mathbb{Q}$ by an arbitrary number field $K$. The Dedekind zeta function of $K$ is defined as 
\[\zeta_{K} (s) = \sum_{a\subset O_{K}} \frac{1}{(N_{K/\mathbb{Q}}(a))^{s}},\]
where $a$ ranges through the non-zero ideals of the ring of integers $O_{K}$ of $K$ and $N_{K/\mathbb{Q}}(a)$ denotes the absolute norm of $a$. This sum is absolutely convergent for $\mathrm{Re}(s)>1$.

  Moreover when we fix an ideal $I$ of $K$, we can define the partial Dedekind zeta function
  \[\zeta_{K,I}(s) = \sum_{a\in I/O^{\times}_{K}}\frac{1}{\vert N_{K/\mathbb{Q}}(a)\vert ^{s}},\]
  where $a$ still runs through all non-zero elements in $I/O^{\times}_{K}$.

   However little is known if we combine the two directions of generalization. It is natural to ask what we should put in the missing place in the following diagram to complete this diagram.
\begin{diagram}
\mathrm{Classical}\  \mathrm{zeta}\ \mathrm{values}/\mathbb{Q} &\rTo &  (\mathrm{partial}) \mathrm{Dedekind}\ \mathrm{zeta}\ \mathrm{functions}/K \\
 \dTo &     & \dDotsto \\
\mathrm{Mutiple}\ \mathrm{zeta}\ \mathrm{values}/\mathbb{Q} & \rDotsto &  \textbf{?} 
\end{diagram}

 In this paper, we will provide a potential answer to this question. Our starting point is to generalize the Hecke formula. Hecke's formula is one of the typical examples within the theory of automorphic periods, which relates the $L$-functions and period integrals. 
 
\subsection{Hecke's formula} 
 In 1917, Hecke \cite{Hec17} proved that the integral of the restriction of a suitable Eisenstein series on $GL(n)$ over $\mathbb{Q}$ to the idele class group of a given number field (of degree $n$) multiplied by an idele class character $\chi$ of finite order is equal to the $L$-functions of $\chi$, up to some $\Gamma$- factors. 

 More precisely, Let $K$ be a number field of degree $[K: \mathbb{Q}] = r_1+ 2 r_2$,
\[K_{\mathbb{R}} = K\otimes \mathbb{R} \DistTo \mathbb{R}^{r_1}\times\mathbb{C}^{r_2}. \]
Define the norm map 
\[N = N_{K/\mathbb{Q}}\otimes id : K^{\times}_{\mathbb{R}} \longrightarrow \mathbb{R}^{\times}.\]
In order to state Hecke's formula, we will need the following data:
\begin{altenumerate}
\item  Let $U \subset O^{\times}_{K,+}$ be a subgroup of finite index, where 
$$O^{\times}_{K,+} = O^{\times}_{K} \cap \left( K^{\times}_{\mathbb{R}}\right)_{+}, \quad  \left( K^{\times}_{\mathbb{R}}\right)_{+} = \left(\mathbb{R}^{\times}_{+} \right)^{r_1}\times \left( \mathbb{C}^{\times}\right)^{r_2}.$$ 
\item  Let $I \subset K$ be a fractional $O_{K}$-ideal. 
\item  $\exists\ m\in \mathbb{N}\smallsetminus \{0\}$, $ \quad \phi: I/m I  \longrightarrow \mathbb{C}$ be a function such that 
\[\forall \epsilon \in U, \quad \forall \alpha \in I\smallsetminus\{0\}, \qquad \phi(\epsilon \alpha) = \phi(\alpha) .\]
\end{altenumerate}
We consider the following embedding (defined up to a conjugation) 
\[GL_{K}(1) \hookrightarrow GL_{\mathbb{Q}}([K:\mathbb{Q}]).\]
Let $E(g,s,\phi)$ be the Eisenstein series defined by 
\[E(g, s,\phi) = \sum_{x \in I \smallsetminus \{0\}}\frac{\phi(x)}{\Vert g\cdot x \Vert^{s}}, \]
where $g \in GL_{\mathbb{Z}}(I)(\mathbb{R}) \cong GL_{\mathbb{Q}}([K:\mathbb{Q}])(\mathbb{R})$ and $\Vert \cdot \Vert $ is the standard hermitian norm on $\mathbb{R}^{r_1} \times \mathbb{C}^{r_2} = K_R$.
 
 Hecke proved the following formula
  \begin{theo}[\textbf{The Hecke Formula} \cite{Hec17}]  
 If $$U_{\mathbb{R}} = \mathrm{Ker}\left(N_{K/\mathbb{Q}}\otimes 1: \left( K^{\times}_{\mathbb{R}}\right)_{+} \longrightarrow \mathbb{R}^{\times}_{+} \right),$$ then
\[\int_{U_{\mathbb{R}}/U}E(u, [K:\mathbb{Q}]s, \phi)\ \mathrm{d}^{\times}\mu(u) = \frac{2^{1-r_1}\pi^{r_2}}{[K:\mathbb{Q}]} \frac{\Gamma(s/2)^{r_1}\Gamma(s)^{r_2}}{\Gamma([K:\mathbb{Q}]s/2)}\sum_{\alpha \in (I\setminus \{0\})/U}\frac{\phi(\alpha)}{\mid N_{K/\mathbb{Q}}(\alpha) \mid ^{s}},\]
 where $\mathrm{d}^{\times}\mu(u)$ is a suitably normalized Haar measure on $U_{\mathbb{R}}/U$.
\end{theo}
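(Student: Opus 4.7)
The plan is a classical unfolding of the Eisenstein series followed by a Mellin-transform reduction to a Gaussian integral that factors archimedean-place by archimedean-place. Setting $n=[K:\mathbb{Q}]$, I would substitute the definition of $E(u,ns,\phi)$ into the integral and group the sum over $x\in I\smallsetminus\{0\}$ by $U$-orbits. The invariance $\phi(\epsilon\alpha)=\phi(\alpha)$ pulls $\phi$ outside the orbit sum; Dirichlet's unit theorem implies $U_{\mathbb{R}}/U$ is compact (the non-compact radial directions are quotiented by the rank-$(r_1+r_2-1)$ image of $U$ under the logarithmic embedding, while the angular directions at complex places are already compact), and absolute convergence of the Eisenstein series for $\mathrm{Re}(s)>1$ justifies Fubini, so the sum over $\epsilon\in U$ combines with the integral over $U_{\mathbb{R}}/U$ into a single integral over $U_{\mathbb{R}}$:
$$\int_{U_{\mathbb{R}}/U}E(u,ns,\phi)\,d^{\times}\mu(u)=\sum_{\alpha\in(I\smallsetminus\{0\})/U}\phi(\alpha)\,I(\alpha),\qquad I(\alpha):=\int_{U_{\mathbb{R}}}\frac{d^{\times}\mu(u)}{\Vert u\alpha\Vert^{ns}}.$$

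To evaluate $I(\alpha)$ I would apply the Mellin representation $\Vert y\Vert^{-ns}=2\pi^{ns/2}\Gamma(ns/2)^{-1}\int_0^{\infty}t^{ns-1}e^{-\pi t^{2}\Vert y\Vert^{2}}\,dt$ with $y=u\alpha$, interchange the $t$- and $u$-integrals, and use that the diagonal inclusion $\mathbb{R}_+^{\times}\hookrightarrow(K_{\mathbb{R}}^{\times})_+$ commutes with $U_{\mathbb{R}}$ and has trivial intersection with it, so $(K_{\mathbb{R}}^{\times})_+\cong\mathbb{R}_+^{\times}\times U_{\mathbb{R}}$ via $x=tu$ with $|N(tu)|=t^n$. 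The pair $(t,u)$ thus unfolds onto the full positive cone; since $\Vert x\alpha\Vert$ depends on $\alpha$ only through the archimedean absolute values $|\alpha_v|$, the Haar-invariant translation $y=x\tilde\alpha$, where $\tilde\alpha\in(K_{\mathbb{R}}^{\times})_+$ has components $|\alpha_v|$ at every place (embedded in $\mathbb{R}_+^{\times}\subset\mathbb{C}^{\times}$ at complex places), pulls out $|N_{K/\mathbb{Q}}(\alpha)|^{-s}$ and leaves the reference integral $\int_{(K_{\mathbb{R}}^{\times})_+}e^{-\pi\Vert y\Vert^{2}}\,|N(y)|^{s}\,d^{\times}y$.

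The reference integral factors over archimedean places into one-variable Gamma integrals: each real place contributes $\tfrac{1}{2}\pi^{-s/2}\Gamma(s/2)$, and each complex place (with $\Vert z\Vert^2=|z|^2$ and $d^{\times}z=dx\,dy/|z|^2$) contributes $\pi^{1-s}\Gamma(s)$, totaling $2^{-r_1}\pi^{r_2-ns/2}\Gamma(s/2)^{r_1}\Gamma(s)^{r_2}$ after using $r_1/2+r_2=n/2$. Multiplying by the Mellin prefactor $2\pi^{ns/2}/\Gamma(ns/2)$ cancels the extra power of $\pi$, and a further factor of $1/n$ appears from the Haar-measure relation $d^{\times}x=n\,(dt/t)\,d^{\times}\mu(u)$ under the decomposition above (because $|N(t)|=t^n$ for the diagonal scalar action, so $d(t^n)/t^n=n\,dt/t$), yielding exactly $\frac{2^{1-r_1}\pi^{r_2}}{n}\cdot\frac{\Gamma(s/2)^{r_1}\Gamma(s)^{r_2}}{\Gamma(ns/2)}$ as the constant multiplying $|N_{K/\mathbb{Q}}(\alpha)|^{-s}\phi(\alpha)$ in the orbit sum. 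The principal technical obstacle is this constant-chasing: fixing the normalizations of $d^{\times}\mu$, $dt/t$, $d^{\times}x$, and the hermitian-norm convention at complex places consistently so that the degree-$n$ twist of the norm map yields the $1/n$ and the complex-place factors assemble into $\pi^{r_2}$; once the conventions are pinned down, the remaining steps reduce to standard Fubini applications and elementary Gamma integrals.
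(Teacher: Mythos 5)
Your proposal is correct, and the constant-chasing works out: the real-place factor $\tfrac12\pi^{-s/2}\Gamma(s/2)$, the complex-place factor $\pi^{1-s}\Gamma(s)$, the relation $r_1/2+r_2=n/2$, and the Jacobian $n$ from $|N(t,\dots,t)|=t^{n}$ do assemble into $\frac{2^{1-r_1}\pi^{r_2}}{n}\frac{\Gamma(s/2)^{r_1}\Gamma(s)^{r_2}}{\Gamma(ns/2)}$. The route, however, is not the one the paper takes, because the paper does not prove this theorem at all: it is stated in the introduction with a citation to Hecke, and the paper's own treatment (Theorem \ref{thm:hecke}) performs only your first step --- unfolding the sum over $I\smallsetminus\{0\}$ into $U$-orbits times an integral over $U_{\mathbb{R}}$ --- and then invokes the \emph{Hecke transform} (Definition-Proposition \ref{def:hec}), a closed-form evaluation of $\int_{U_{\mathbb{R}}}\Vert ux\Vert^{-2s}\prod_j u_j^{-2p_j}\,\mathrm{d}^{\times}u$ imported without proof from Nekov\'a\v{r}--Scholl. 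Your Mellin--Gaussian argument (unfold $(t,u)$ onto the full cone $(K_{\mathbb{R}}^{\times})_+$, translate by $\tilde\alpha$, factor the Gaussian integral over archimedean places) is in effect a proof of the $p_j=0$ case of that black-box lemma, extended to mixed signature; what it buys is self-containedness, while the paper's formulation buys the extra parameters $p_j$ needed later for twisted integrands. Two small points to pin down if you write this up: the interchange of $\sum_{\epsilon\in U}$ with $\int_{U_{\mathbb{R}}/U}$ tacitly assumes $U$ acts freely on $I\smallsetminus\{0\}$, which can fail when $K$ is not totally real since $(K^{\times}_{\mathbb{R}})_+$ contains full $\mathbb{C}^{\times}$ factors and $U$ may then contain roots of unity (the resulting finite stabilizer is what the phrase ``suitably normalized Haar measure'' absorbs); and the convergence justifying Fubini holds for $\mathrm{Re}(s)>1$, matching the region where the orbit sum on the right converges.
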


  This formula is a starting point of our attempt to define the \lq\lq multiple Dedekind zeta values\rq\rq \  in terms of certain period integral in the spirit of Hecke. The problem now is how to write a period integral for multiple variables. The idea is to find an appropriate object to replace the Eisenstein series in Hecke's formula. For this purpose, we get inspiration from the theory of A.~Goncharov \cite{Gon16} on Hodge correlators. 
  
\subsection{Hodge correlators}  
  In his survey of European Congress of Mathematics\cite{Gon00}, Goncharov discussed the problems of the study of the Lie algebra of the image of the motivic Galois group acting on the motivic fundamental group of $\mathbb{P}^{1}\backslash \{0, \mu_{N}, \infty\}$, and mentioned a surprising and mysterious connection between these problems and the geometry of modular varieties. The Hodge realization of these problems is related to the arithmetic problem of multiple zeta values. 
  

  At the end of this survey Goncharov stated that he expected a similar theory for an arbitrary number field. He also gave an example when the number field is an imaginary quadratic field and he considered the motivic fundamental group of a CM elliptic curve, and constructed the multiple Hecke $L$-values as values at torsion points of multiple elliptic polylogarithms. His intriguing insight is that one can define the multiple polylogarithms for arbitrary curves as correlators for certain Feynman integrals. This idea has been realized in his recent work \cite{Gon16} about the Hodge correlators. 
  
  The Hodge correlators are constructed from just one fundamental object, namely, the Green function, integrated along some Feynman diagrams in Hodge-theoretic setting. In Goncharov's work \cite{Gon16}, Hodge correlators are periods of motivic correlators. $L$-values can be interpreted as Hodge correlators (e.g. the Rankin-Selberg integrals). It seems quite reasonable to take Goncharov's path for our purpose. 
  
  However, Goncharov's construction is carried out over complex field $\mathbb{C}$. In order to apply Goncharov's idea, we need to find the appropriate analogue for number fields. That's the moment when Nekov\'a\v r and Scholl's plectic principle comes in. 
  
\subsection{Plectic principle} 

  Let $F$ be a totally real number field. J. Nekov\'a\v{r} and A. Scholl \cite{NS16} formulated what they call the plectic conjecture. The geometric objects in this conjecture are Shimura varieties/stacks whose definition groups are restrictions of scalars from an algebraic group over $F$. More concretely, they work with abelian varieties with real multiplication by $O_{F}$, where $O_{F}$ is the ring of integers of $F$. The plectic principle includes, among others, Oda's conjecture about factorization of periods of Hilbert modular forms. However, we only use the weak version of equivariant cohomology to construct the plectic Green functions. We now explain the plectic principle of this version.

  Let $B$ be a connected complex manifold, $X /B $ a family of abelian varieties with real multiplication, and $s: B \longrightarrow X$ a nonzero torsion section fixed by a subgroup of finite index $U\subset O^{\times}_{F,+}$, which is the group of totally positive units. This subgroup $U$ acts naturally on $X$ and acts trivially on $B$, then we should consider the 
following diagram 
\begin{diagram}
X\times EU &\rTo   &X\times_{U}EU         &\rTo   &X\\
\dTo &       &  \dTo  &           &\dTo\\
B\times EU &\rTo    & B\times_{U}EU       &\rTo^{\pi}  &B,
\end{diagram}
where $EU$ is the topological total space over the classifying space $BU$ of the group $U$ and 
$$B\times_{U}EU = B\times \left ( EU/U\right), \quad X\times_{U}EU = (X\times EU)/U$$ 

Since $U$ is just a discrete group, then
$EU \cong U_{\mathbb{R}}$. 

  If $B = \{ pt \}$, then $X$ is a variety and we have the following situation 
\begin{diagram} \label{diag:plec}
X\times U_{\mathbb{R}} & \rTo &X\times_{U}U_{\mathbb{R}} &\rTo & X \\   
\dTo &   &\dTo &  &\dTo \\
pt\times U_{\mathbb{R}} &\rTo & pt\times U_{\mathbb{R}}/U & \rTo & pt.         
\end{diagram}
  
   Nekov\'a\v r and Scholl \cite{NS16} constructed in their work a $U$-equivariant current $\tilde{\theta}(\cdot , \cdot)$ on $\tilde{X} = X\times U_{\mathbb{R}}$. In fact $\tilde{\theta}(\cdot ,\cdot)$ is a plectic generalization of the (slightly modified) $\log | \theta (\tau, z)|$ of the absolute value of the standard Theta function on the elliptic curve $E = \mathbb{C}/(\mathbb{Z}\tau + \mathbb{Z})$, which is the Green function on $E$.
 
  So $s^{\ast}(\tilde{\theta})$ is $U$-equivariant on $\tilde{B} = B\times U_{\mathbb{R}}$. Then $s^{\ast}(\tilde{\theta})$ can descend to a current on $B\times (U_{\mathbb{R}}/U)$, and we can compute the trace 
\[ \pi_{\ast}(s^{\ast}(\tilde{\theta}))  = \int_{U_{\mathbb{R}}/U}s^{\ast}(\tilde{\theta}),\]
which gives very interesting functions, such as generalized Eisenstein-Kronecker-Lerch series. 

  The above integral, as well as its variants involving more complicated functions than
$\tilde\theta(\cdot ,\cdot)$, can be computed by integrating suitable expressions depending on
$\Vert ux\Vert$ over $U_{\mathbb{R}}$. For this purpose, the Hecke transform (see Definition-Proposition \ref{def:hec} in section 3) is 
introduced and used in their work. Here is a typical example. We will give a new interpretation of the Hecke formula in the spirit of the plectic principle.
  
  Combining the idea of Hodge correlators and the plectic principle, we construct the higher plectic Green functions (see Definition \ref{defi:1}) $G_{I, \nu,\Gamma, S}(\{x_{v}\}_{v\in S}, u)$ on $\left( F_{\mathbb{R}}/I\right)^{|S|}\times U_{\mathbb{R}}$. These functions depend on a fixed fractional ideal $I$ and some combinatorial data, namely a non-oriented graph $\Gamma$ and a finite subset $S$ of the set of all vertices.  
  
  The higher plectic Green functions constitute the key of our attempt to generalize the Hecke formula. 
  We can therefore define a multivariable function (see Definition \ref{defi:2}) as 
  \[ \mathscr{F}_{I, \nu,\Gamma, S}(\{x_{v}\}_{v\in S}) =   \frac{1}{[O^{\times}_{F,+}: U]}\int_{U_{\mathbb{R}}/U} G_{I, \nu, \Gamma, S}(\{x_{v}\}_{v\in S}, u)\ \mathrm{d}^{\times}u, \]
  and define the \lq\lq generalized multiple zeta value\rq\rq \ (see Definition \ref{defi:3}) as
  \[ Z_{I, \nu}(\Gamma, S) = \mathscr{F}_{I, \nu, \Gamma, S}(\{0\}_{v\in S}). \]
 
 \paragraph{\textbf{Main results}} After general construction, in this article we will focus on the rational field $\mathbb{Q}$ and show that 

\begin{theo} [See Theorem \ref{theo:1}]
If $F = \mathbb{Q}, I^{\ast}=\mathbb{Z}$, then the generalized multiple zeta value $ Z_{I, \nu}(\Gamma, S)$ associated to an arbitrary tree $\Gamma$ is a finite $\mathbb{Z}$-linear combination of classical MZVs of depth and weight determined by the given tree. 
 \end{theo}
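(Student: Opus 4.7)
The plan is to exploit the triviality of the plectic apparatus when $F=\mathbb{Q}$, unfold the higher plectic Green function on a tree as a Feynman integral built from the classical circle Green function, and then use Fourier expansion together with tree-induction to identify the result as a $\mathbb{Z}$-linear combination of MZVs.

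When $F=\mathbb{Q}$ the totally positive unit group $O^{\times}_{F,+}$ is trivial, so every finite-index subgroup $U$ is trivial and the norm-one torus $U_{\mathbb{R}}$ reduces to a point. Consequently, the Hecke average $\frac{1}{[O^{\times}_{F,+}:U]}\int_{U_{\mathbb{R}}/U}(\cdot)\,\mathrm{d}^{\times}u$ that defines $\mathscr{F}_{I,\nu,\Gamma,S}$ collapses to evaluation, and $Z_{I,\nu}(\Gamma,S)$ equals the value of $G_{I,\nu,\Gamma,S}(\{x_v\}_{v\in S},\ast)$ at $x_v=0$ for all $v\in S$. Because $I^{\ast}=\mathbb{Z}$, the quotient $F_{\mathbb{R}}/I$ is $\mathbb{R}/\mathbb{Z}$, and the plectic Green function attached to each edge $e$ reduces to the classical circle Green function of weight $\nu(e)$, with Fourier expansion
\[
G_{\nu}(x,y) \;=\; \sum_{n\in\mathbb{Z}\setminus\{0\}} \frac{e^{2\pi i n(x-y)}}{(2\pi i n)^{\nu}}.
\]
Next, I would write the tree correlator as a Feynman integral. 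With $V_{\mathrm{int}}(\Gamma) = V(\Gamma)\setminus S$,
\[
G_{I,\nu,\Gamma,S}\bigl(\{x_v\}_{v\in S},\ast\bigr) \;=\; \int_{(\mathbb{R}/\mathbb{Z})^{V_{\mathrm{int}}(\Gamma)}} \prod_{e\in E(\Gamma)} G_{\nu(e)}\bigl(y_{s(e)},y_{t(e)}\bigr)\prod_{v\in V_{\mathrm{int}}(\Gamma)} \mathrm{d}y_v,
\]
where $y_v=x_v$ for $v\in S$. Inserting the Fourier expansion on each edge and interchanging sum and integral (after a mild convergence-factor regularization, discussed below), each integration over an internal vertex $v$ enforces the Kirchhoff condition that the signed sum of the frequencies on the edges incident to $v$ vanishes. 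Since $\Gamma$ is a tree, after rooting $\Gamma$ these linear constraints can be solved uniquely, leaving a sum parametrized by independent integer frequencies attached to the leaves. Setting all $x_v=0$ yields an expression of the shape
\[
\sum_{n_1,\ldots,n_k\ge 1} \frac{(\pm 1)}{\prod_{e\in E(\Gamma)}\bigl(2\pi i\, N_e(n_1,\ldots,n_k)\bigr)^{\nu(e)}},
\]
where each $N_e$ is a signed sum of the $n_j$'s whose path to the root traverses $e$.

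The third step is to extract classical MZVs by induction on $|V_{\mathrm{int}}(\Gamma)|$. The base case is a path, for which the above sum is by inspection a classical MZV up to an explicit power of $2\pi i$. For the inductive step, any degree-two internal vertex is eliminated via the convolution identity
\[
\int_0^1 G_{a}(x-y)\,G_{b}(y-z)\,\mathrm{d}y \;=\; G_{a+b}(x-z),
\]
which collapses two consecutive edges into a single edge of weight $a+b$; at internal vertices of higher valence I would invoke the standard stuffle-type partial fraction identities expressing $\tfrac{1}{m^a n^b}$ as a $\mathbb{Z}$-linear combination of nested sums in $m$, $n$, and $m+n$. Iterating reduces any tree to a sum of paths, hence to a $\mathbb{Z}$-linear combination of classical MZVs whose depth and weight are read off from the combinatorics of $\Gamma$ and from $\nu$.

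The main obstacle is analytic: the Fourier interchange is only conditionally convergent, and one must justify it either via a Goncharov-style heat-kernel regularization or by truncation followed by term-by-term manipulation and passage to the limit. A secondary combinatorial point is to verify that the stuffle-type recursion preserves integer coefficients throughout; this is where the tree hypothesis is crucial, since the absence of cycles prevents loop integrations from appearing, so that no transcendental or genuinely rational (as opposed to integer) corrections arise and every binomial coefficient produced by the partial-fraction step is an honest integer.
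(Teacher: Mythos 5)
Your proposal follows essentially the same route as the paper: the plectic integral collapses over $\mathbb{Q}$, the Fourier expansion of the edge Green functions turns the internal-vertex integrations into Kirchhoff-type constraints that the tree structure solves in terms of independent positive frequencies, and the resulting Mordell--Tornheim-type sums are converted into nested sums (hence classical MZVs) by iterating the integer-coefficient partial-fraction identity --- the paper's ``Eisenstein trick'' (Formula \ref{for:1}) --- in an induction on the number of internal vertices. The only discrepancy is notational: in the paper the edge exponents come from the subdivision map $\underline{k}$ and the denominators are $|n_e|^{k_e}$ with a sign condition $\mathrm{sgn}(n_e)=(-1)^{\nu_e}$ (which is precisely what restricts all summation indices to positive integers), so no powers of $2\pi i$ enter and the coefficients are honestly in $\mathbb{Z}$.
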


\begin{theo} [See Theorem \ref{thm:poly}]
If $F = \mathbb{Q}, I^{\ast}=\mathbb{Z}$, $x_{v} \in \frac{1}{N}\mathbb{Z}/\mathbb{Z}$, then
$G_{I,\nu,\Gamma, \partial \Gamma}(\{x_{v}\}_{v\in \partial \Gamma}, 1)$ is a finite $\mathbb{Z}$-linear combination of the 
values of multiple polylogarithms of depth $d = \mathrm{rank}( H_1(\Gamma, S))$ evaluated at some $N$-th roots of unity.
\end{theo}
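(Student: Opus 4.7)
The plan is to reduce the claim to Theorem~\ref{theo:1} by unwinding the Feynman amplitude defining \(G_{I,\nu,\Gamma,\partial\Gamma}\), but this time carrying along the external characters encoded by \(\{x_v\}\).

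\textbf{Setup.} When \(F=\mathbb{Q}\) the quotient \(F_{\mathbb{R}}/I\) is \(\mathbb{R}/\mathbb{Z}\) and \(U_{\mathbb{R}}=\{1\}\), so the plectic variable trivializes and \(G_{I,\nu,\Gamma,\partial\Gamma}(\{x_v\},1)\) is a function of the external arguments alone. By Definition~\ref{defi:1} it is the Feynman amplitude of \(\Gamma\): to each edge \(e=\{v,w\}\) one attaches a circle Green function \(G^{(\nu_e)}(x_v-x_w)\), one forms the product over all edges, and one integrates \(\mathrm{d}x_v\) over the internal vertices \(v\in V(\Gamma)\setminus\partial\Gamma\).

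\textbf{Fourier expansion.} On \(\mathbb{R}/\mathbb{Z}\) the Green function admits an explicit Fourier series \(G^{(\nu_e)}(t)=c_{\nu_e}\sum_{n\neq 0}|n|^{-\nu_e}e^{2\pi i n t}\). Plugging these in and integrating out the internal vertices collapses each interior integration to a Kronecker delta imposing Kirchhoff's law at that vertex. The joint kernel of these constraints on the lattice \(\mathbb{Z}^{E(\Gamma)}\) of edge momenta is canonically \(H_1(\Gamma,\partial\Gamma;\mathbb{Z})\), of rank \(d\) by hypothesis. Fixing an integral basis \((m_1,\dots,m_d)\) writes each edge momentum as an integer linear form \(L_e(m_1,\dots,m_d)\) and produces boundary characters \(\prod_{v\in\partial\Gamma}\exp(2\pi i\lambda_v(m)x_v)\), the integer linear forms \(\lambda_v\) being dictated by the incidences of \(\Gamma\).

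\textbf{Reduction to polylogarithms.} Specializing \(x_v=a_v/N\) with \(\zeta_v=e^{2\pi i a_v/N}\) an \(N\)-th root of unity, one is left with
\[G_{I,\nu,\Gamma,\partial\Gamma}(\{x_v\},1)\;=\;C\sum_{\substack{(m_1,\dots,m_d)\in\mathbb{Z}^d\\ L_e(m)\neq 0\ \forall e}}\ \prod_{e\in E(\Gamma)}\frac{1}{|L_e(m)|^{\nu_e}}\ \prod_{v\in\partial\Gamma}\zeta_v^{\lambda_v(m)}\]
for an explicit \(C\in\mathbb{Z}\). Partitioning the summation lattice into the finitely many simplicial cones on which the signs and linear ordering of the integers \(L_e(m)\) are constant turns each contribution into an iterated sum of the shape \(\sum_{0<n_1<\dots<n_d}\prod\xi_i^{n_i}/n_i^{s_i}\) after the same integer partial-fraction manipulations used in Theorem~\ref{theo:1}; each such iterated sum is by definition a multiple polylogarithm of depth \(d\) evaluated at \(N\)-th roots of unity.

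\textbf{Main obstacle.} The delicate point is ensuring that the combinatorial decomposition yields depth exactly \(d\) rather than larger, with strictly \(\mathbb{Z}\)-integral coefficients. I expect this to follow by induction on \(\Gamma\) via the same reductions — contracting an edge or deleting a pendant external vertex — that power Theorem~\ref{theo:1}; the new ingredient is to track how the twisting characters \(\zeta_v^{\lambda_v(m)}\) transform under these contractions, which is the main technical bookkeeping of the argument.
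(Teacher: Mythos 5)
Your overall route is the paper's: Fourier-expand the edge Green functions, integrate out the internal vertices to impose Kirchhoff constraints identifying the momentum lattice with $H_1(\Gamma,\partial\Gamma)\otimes\mathbb{Z}$ of rank $d$, and then feed the resulting lattice sum into the Eisenstein-trick machinery already built for Theorem \ref{theo:1}. But the step you defer at the end --- ensuring that the twisting characters survive the partial-fraction reduction in a form that assembles into multiple polylogarithms of depth exactly $d$ --- is not peripheral bookkeeping; it \emph{is} the content of this theorem, since everything before it is a restatement of the proof of Theorem \ref{theo:1}. Your proposed substitute (an induction on $\Gamma$ by edge contraction, tracking how the characters $\zeta_v^{\lambda_v(m)}$ transform under contractions) is left entirely unexecuted, and it is also unnecessary: no new induction or cone decomposition is needed beyond what Theorem \ref{theo:1} already supplies.

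The paper closes the gap with two short observations. First, Eisenstein's trick only rewrites denominators, so the numerator $e^{2\pi i\sum_{j=1}^{d+1} n_j x_{v_j}}$ rides through the entire reduction of Theorem \ref{theo:1} untouched; one therefore obtains $G_{I,\nu,\Gamma,\partial\Gamma}(\{x_v\},1)$ as the same $\mathbb{Z}$-linear combination of nested-denominator sums as in Theorem \ref{theo:1}, each multiplied by this fixed character. Second, since the boundary constraint forces $\sum_{j=1}^{d+1} n_j = 0$, Abel summation gives
\[
\sum_{j=1}^{d+1} n_j x_{v_j} \;=\; \sum_{l=1}^{d}\Bigl(\sum_{j=1}^{l} n_j\Bigr)\bigl(x_{v_l}-x_{v_{l+1}}\bigr),
\]
so in the nested variables $k_l = n_{\gamma\cdot 1}+\cdots+n_{\gamma\cdot l}$ the character factors as $\prod_{l=1}^{d} z_{\gamma\cdot l}^{\,k_l}$ with $z_{\gamma\cdot l}=e^{2\pi i(x_{v_{\gamma\cdot l}}-x_{v_{\gamma\cdot(l+1)}})}$ an $N$-th root of unity; each term is then literally the summand of $Li_{t^{\gamma}_1,\dots,t^{\gamma}_d}(z_{\gamma\cdot 1},\dots,z_{\gamma\cdot d})$, with the same integer coefficients $C_{\gamma,t^{\gamma}_i}$ as in Theorem \ref{theo:1}. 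This is the missing idea you would need to supply. (A minor slip: the Fourier coefficient of the edge Green function should be $|n|^{-k_e}$ with $k_e$ the subdivision weight, the sign datum $\nu_e\in\{0,1\}$ only restricting the sign of $n$; your formulas use $\nu_e$ as the exponent.)
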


If the number field in consideration is an arbitrary number field of degree $d$, the generalized multiple zeta values involve highly non-trivial iterated integrals, which are higher dimensional generalizations of polylogarithms. Their properties will be discussed in a separate article.


\subsection*{Notations and conventions}
\begin{altitemize}
 \item Let $F$ be a totally real field of degree $[F:\mathbb{Q}] = r$ and $O_{F}$ the ring of integers. Let $O^{\times}_{F,+}$ be the group of totally positive units and $U$ a subgroup of finite index of $O^{\times}_{F,+}$. Let
 \[U_{\mathbb{R}} = \mathrm{Ker}(N:(F^{\times}_{\mathbb{R}})_{+}\longrightarrow \mathbb{R}^{\times}_{+}) = \{(u_{1},\ldots,u_{r})\in \mathbb{R}^{r}_{+}\mid u_{1}\cdots u_{r} = 1\}.\]
 
 Let $I$ be an ideal of $F$. $\mathrm{Tr} = \mathrm{Tr}_{F/\mathbb{Q}}$ and $N = N_{F/\mathbb{Q}}$ denote respectively the trace and norm map. Let $\mathscr{D}$ be the different ideal and $I^{\ast}$ the dual ideal of $I$
 \[ I^{\ast} = \{ a\in F | \mathrm{Tr}(a I) \in \mathbb{Z}\} = \mathscr{D}^{-1}I^{-1}.\]
 
 Let $\mid\mid\cdot \mid\mid: F_{\mathbb{R}} = F\otimes \mathbb{R} \DistTo \mathbb{R}^{\mathrm{Hom}(F,\mathbb{R})}\longrightarrow \mathbb{R}_{+} \cup \{0\}$ be the standard euclidean norm. 
 
 \item Let $K$ be an arbitrary number field of degree $[K:\mathbb{Q}] = r_1 + 2r_2 = r$.  
 \item Let $C^a_b $ denote ${b}\choose {a}$ the binomial coefficient.
\end{altitemize}

\subsection*{Acknowledgments} The author thanks J. Nekov\'a\v r for introducing this interesting problem and for helpful discussions. This paper is part of the author's thesis of Sorbonne Universit\'e. The author thanks M. Kaneko and D. Zagier for careful reading and inspiring comments. The author also wants to thank Y. Manin for helpful discussions. This paper was written during the author's stay at the Max Planck Institute for Mathematics in Bonn, whose hospitality and financial support is greatly appreciated.

\section{Plectic Green functions}
Inspired by Goncharov's work, we look for an analogue of Green's function over number fields. In this section, we will focus on $F$. We need to consider a generalization of $\log |1- e^{2\pi ix}|^{2}$ on the compact real torus 
\[S^{1} = \{e^{2\pi ix} | x \in \mathbb{R}/\mathbb{Z}\}\subset \mathbb{C}^{\times}.\]
This function is the restriction of the Green function $G(1,y) = \log |1 - y|^2$ of the origin of $\mathbb{C}^\times$ to $S^1$.
We are going to consider corresponding objects on tori with real multiplication.

\subsection{Plectic Green functions}
Let us recall Diagram (\ref{diag:plec}) for plectic philosophy. In our case, $B$ is just a point and $X = F_{\mathbb{R}}/I$ the real torus with real multiplication by $O_{F}$.  The condition of a nonzero torsion section $s: B \longrightarrow X$ fixed by a subgroup of finite index $U\subset O^{\times}_{F,+}$ in the plectic picture implies that $x\in F_{\mathbb{R}}/I$ is a torsion point. Such a subgroup $U$ exists if and only if $x$ lies in the torsion subgroup of $F_{\mathbb{R}}/I$. 
The picture for plectic principle now turns out to be
\begin{diagram} 
(F_{\mathbb{R}}/I)\times U_{\mathbb{R}} & \rTo &(F_{\mathbb{R}}/I)\times_{U}U_{\mathbb{R}} &\rTo & F_{\mathbb{R}}/I \\   
\dTo &   &\dTo &  &\dTo \\
pt\times U_{\mathbb{R}} &\rTo & pt\times (U_{\mathbb{R}}/U) & \rTo & pt.         
\end{diagram}

  In order to apply the plectic principle, we will firstly construct objects on $\left(F_{\mathbb{R}}/I\right)\times U_{\mathbb{R}}$, namely plectic Green functions.
  \begin{defi}[\textbf{Plectic Green function}] \label{defi:plecticGreen} 

  The plectic Green function associated to the ideal $I$ is defined as
\[ g_{I}(x,u) = \lim_{\eta \rightarrow 0^{+}} \sum_{n\in I^{\ast}\backslash \{0\}}\frac{e^{2\pi i\mathrm{Tr}(nx)}}{||un||^{r+\eta}}, \quad  x\in F_{\mathbb{R}}/I, \ u\in U_{\mathbb{R}}.\]
This function can be viewed as a distribution on $(F_{\mathbb{R}}/I)\times U_{\mathbb{R}}$. 
  \end{defi}

  \subsubsection*{\textbf{Modified plectic Green functions}}
  Let $J_{F} = Hom(F, \mathbb{R})$ be the set of all field embeddings 
  \[ \{ \varsigma :F \hookrightarrow \mathbb{R} \}.\] 
  We can modify the plectic Green function by adding an additional choice of multisigns 
  $$\nu : J_{F} \longrightarrow \{0,1\}.$$
  Let us denote 
  \[\mathrm{sgn}(n)^{\nu} = \prod_{\varsigma \in J_{F}}\left(\mathrm{sgn}(\varsigma(n)) \right)^{\nu(\varsigma)}, \]
  and we make a convention of notation
  \[ \mathrm{sgn}(n) = (-1)^{\nu} \Longleftrightarrow \ \mathrm{sgn}(\varsigma(n)) = (-1)^{\nu(\varsigma)}, \qquad  \forall \varsigma \in J_{F}.\]
  There are two ways of modification of the plectic Green function, one is defined as
\[ g^{\nu}_{I}(x,u) = \lim_{\eta \rightarrow 0^{+}} \sum_{n\in I^{\ast}\backslash \{0\}}\mathrm{sgn}(n)^{\nu}\frac{e^{2\pi i\mathrm{Tr}(nx)}}{||un||^{r+\eta}}, \qquad  x\in F_{\mathbb{R}}/I, \ u\in U_{\mathbb{R}},\]
and another is defined as
\[ g_{I,\nu}(x,u) = \lim_{\eta \rightarrow 0^{+}} \sum_{\substack{n\in I^{\ast}\backslash \{0\} \\ \mathrm{sgn}(n) = (-1)^{\nu}}}\frac{e^{2\pi i\mathrm{Tr}(nx)}}{||un||^{r+\eta}}, \qquad  x\in F_{\mathbb{R}}/I, \ u\in U_{\mathbb{R}}.\]

  In order to descend the plectic Green function to $ U_{\mathbb{R}}/U $ and then to compute the trace, we need to verify that the plectic Green function is 
  $U$-equivariant. On the other hand, it is natural to ask how our definition depends on the choice of ideal $I$. The following lemmas answer these questions. 
 \begin{lem}[$O^{\times}_{F,+}-$equivariance]
$\forall \epsilon \in O^{\times}_{F,+}$, we have 
\[ g_{I}(\epsilon x, \epsilon u) = g_{I}(x,u).\]
This is also true for $g^{\nu}_{I}(\cdot,\cdot)$ and $g_{I,\nu}(\cdot,\cdot)$.
\end{lem}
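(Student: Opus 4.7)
The plan is to verify the identity term-by-term in the defining Dirichlet series, i.e. for fixed $\eta > 0$, and then take the limit $\eta \to 0^+$.

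First I would fix $\eta > 0$ and consider the absolutely convergent sum
$$g_I(\epsilon x, \epsilon u; \eta) = \sum_{n \in I^{\ast}\setminus\{0\}} \frac{e^{2\pi i \mathrm{Tr}(n\epsilon x)}}{\|\epsilon u n\|^{r+\eta}}.$$
I would then perform the change of summation variable $m = \epsilon n$. Because $\epsilon \in O^{\times}_{F,+} \subset O_F^{\times}$ is a unit in $O_F$, multiplication by $\epsilon$ is an $O_F$-module automorphism of $F$ preserving $I^{\ast}$ (note $I^{\ast}=\mathscr{D}^{-1}I^{-1}$ is a fractional ideal, hence stable under multiplication by units of $O_F$), so $n \mapsto \epsilon n$ is a bijection of $I^{\ast}\setminus\{0\}$ onto itself.

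Under this substitution the exponential becomes $e^{2\pi i \mathrm{Tr}(m x)}$ by $F$-linearity of the trace, and the denominator becomes $\|u m\|^{r+\eta}$ since $\|\epsilon u n\| = \|u \cdot \epsilon n\| = \|u m\|$ (the norm $\|\cdot\|$ on $F_{\mathbb{R}}$ is multiplicative in each embedding coordinate). Thus the series for $g_I(\epsilon x, \epsilon u;\eta)$ is literally a rearrangement of the series for $g_I(x,u;\eta)$; absolute convergence for $\eta>0$ justifies the rearrangement. Taking $\eta \to 0^+$ gives the claim for $g_I$.

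For the two modifications $g_I^{\nu}$ and $g_{I,\nu}$, the same change of variable applies, and the extra fact needed is that $\epsilon$ is \emph{totally positive}: for every embedding $\varsigma \in J_F$, $\varsigma(\epsilon) > 0$, hence $\mathrm{sgn}(\varsigma(\epsilon n)) = \mathrm{sgn}(\varsigma(n))$. This shows $\mathrm{sgn}(m)^{\nu} = \mathrm{sgn}(n)^{\nu}$ in the case of $g_I^{\nu}$, and that the sign condition $\mathrm{sgn}(n) = (-1)^{\nu}$ is preserved by $n \mapsto \epsilon n$ in the case of $g_{I,\nu}$, so again the sums are rearrangements of one another.

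I do not expect any genuine obstacle; the only point needing minor care is absolute convergence at $\eta > 0$, which follows from the standard estimate that $\sum_{n \in I^{\ast}\setminus\{0\}} \|un\|^{-(r+\eta)}$ converges by comparison with a lattice zeta sum in $\mathbb{R}^r$ (for $u$ in a fixed compact subset of $U_{\mathbb{R}}$). Once this is in hand, the argument is a one-line change of variables.
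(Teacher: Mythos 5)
Your proposal is correct; the paper in fact states this lemma without proof, and the intended argument is exactly your change of variables $m=\epsilon n$ (a bijection of $I^{\ast}\setminus\{0\}$ since $\epsilon$ is a unit), combined with $\mathrm{Tr}((\epsilon n)x)=\mathrm{Tr}(n(\epsilon x))$, the identity $(\epsilon u)n=u(\epsilon n)$ in $F_{\mathbb{R}}$, and total positivity of $\epsilon$ for the signed variants. One trivial wording caveat: the Euclidean norm $\Vert\cdot\Vert$ is not itself multiplicative, but your argument only uses that multiplication in $F_{\mathbb{R}}\cong\mathbb{R}^{r}$ is componentwise so that the arguments of the norm agree, which is all that is needed.
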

\begin{lem}[Dependence on $I$]
1. If $\alpha \in F^{\times}_{+}$ and $N(\alpha) = 1$, then $(\alpha I)^{\ast} = \alpha ^{-1}I^{\ast}$. Hence
\[ g_{\alpha I} (\alpha x, \alpha u) = g_{I}(x , u).\]
2.  For any $\alpha \in F^{\times}_{+}$, $g_{\alpha I}(\alpha x, u) = N(\alpha) g_{I}(x,u)$. Therefore up to rescaling, $g_{I}(\cdot, \cdot)$ depends only on the 
class of $I$ in the class group $ Cl^{+}_{F}$.
This is also true for $g^{\nu}_{I}(\cdot, \cdot)$ and $g_{I,\nu}(\cdot,\cdot)$.
\end{lem}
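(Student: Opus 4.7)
The proof rests on two observations: a computation of the dual ideal of $\alpha I$, and a change of variable in the defining series. The first observation is immediate from the characterization $I^{\ast} = \mathscr{D}^{-1}I^{-1}$: for any $\alpha \in F^{\times}$,
\[ (\alpha I)^{\ast} \;=\; \mathscr{D}^{-1}(\alpha I)^{-1} \;=\; \alpha^{-1}\mathscr{D}^{-1} I^{-1} \;=\; \alpha^{-1} I^{\ast}. \]
This already gives the first sentence of part 1 (and no hypothesis on $N(\alpha)$ is really needed for it).

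For part 1, I would substitute $n = \alpha^{-1} m$ with $m\in I^{\ast}\setminus\{0\}$ in the defining series for $g_{\alpha I}(\alpha x,\alpha u)$. Using commutativity of multiplication in $F^{\times}_{\mathbb{R}}$, one sees that $\mathrm{Tr}(n\alpha x)=\mathrm{Tr}(mx)$ and $\|\alpha u n\| = \|u m\|$, so each term is mapped to the corresponding term in $g_{I}(x,u)$. The hypothesis $N(\alpha)=1$ intervenes only to ensure that $\alpha u \in U_{\mathbb{R}}$ whenever $u\in U_{\mathbb{R}}$, so that both sides are defined as stated.

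For part 2 I would perform the same substitution while keeping $u$ fixed, which yields
\[ g_{\alpha I}(\alpha x,u) \;=\; \lim_{\eta\to 0^{+}} \sum_{m\in I^{\ast}\setminus\{0\}} \frac{e^{2\pi i\,\mathrm{Tr}(mx)}}{\|\alpha^{-1} u\,m\|^{\,r+\eta}}. \]
The key point is then to pull out the factor $N(\alpha)$: since $\alpha \in F^{\times}_{+}$ and $u\in U_{\mathbb{R}}$, one can write $\alpha^{-1} u = N(\alpha)^{-1/r}\,\beta$ where $\beta = N(\alpha)^{1/r}\alpha^{-1} u$ lies in $U_{\mathbb{R}}$ (its norm equals $N(\alpha)\cdot N(\alpha^{-1})\cdot N(u) = 1$). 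Because $N(\alpha)^{-1/r}$ is a genuine real scalar, it factors out of the Euclidean norm, producing $\|\alpha^{-1} u\,m\|^{r+\eta} = N(\alpha)^{-(r+\eta)/r}\,\|\beta\,m\|^{r+\eta}$, and hence $g_{\alpha I}(\alpha x,u)=N(\alpha)\,g_{I}(x,\beta)$ after taking $\eta\to 0^{+}$. Thus up to the (harmless) rescaling $u\mapsto\beta$ on the second variable, $g_{\alpha I}(\alpha x,u) = N(\alpha)\,g_{I}(x,u)$ as asserted, and the class of $I$ in $Cl^{+}_{F}$ is the only relevant information.

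For the modified variants $g^{\nu}_{I}$ and $g_{I,\nu}$, the same substitution applies verbatim, using that $\alpha \in F^{\times}_{+}$ is totally positive, so $\mathrm{sgn}(\varsigma(\alpha^{-1}m)) = \mathrm{sgn}(\varsigma(m))$ at every real embedding $\varsigma$. The sign weight $\mathrm{sgn}(n)^{\nu}$ is therefore preserved by $n\mapsto \alpha^{-1}m$, and the argument goes through unchanged. The only delicate point in the whole proof, and the one where the Euclidean nature of $\|\cdot\|$ forces us to be careful, is extracting $N(\alpha)$ from $\|\alpha^{-1} u\,m\|$, since $\|\cdot\|$ is not multiplicative in $F^{\times}_{\mathbb{R}}$; the device of absorbing $\alpha^{-1}$ into $U_{\mathbb{R}}$ modulo a positive scalar is what makes this work.
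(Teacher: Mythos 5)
Your proof is correct, and since the paper states this lemma without any proof, there is nothing to compare against line by line; the change of variables $n=\alpha^{-1}m$ justified by $(\alpha I)^{\ast}=\mathscr{D}^{-1}(\alpha I)^{-1}=\alpha^{-1}I^{\ast}$ is clearly the intended argument, and your treatment of the sign variants via total positivity of $\alpha$ is exactly right. The one substantive point your write-up surfaces, and handles more carefully than the statement itself, is part 2: because $\Vert\cdot\Vert$ is the Euclidean norm rather than a multiplicative one, the substitution gives
\[
g_{\alpha I}(\alpha x,u)\;=\;N(\alpha)\,g_{I}\bigl(x,\,N(\alpha)^{1/r}\alpha^{-1}u\bigr),
\]
so the asserted identity $g_{\alpha I}(\alpha x,u)=N(\alpha)\,g_{I}(x,u)$ is not a pointwise identity in $u$ but holds only after the compensating translation $u\mapsto\beta=N(\alpha)^{1/r}\alpha^{-1}u$ of the $U_{\mathbb{R}}$-variable. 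You are right that this is harmless for the paper's purposes (the translation is by a fixed element of $U_{\mathbb{R}}$, so it disappears after integration over $U_{\mathbb{R}}/U$ against the invariant measure), and this is evidently what the phrase ``up to rescaling'' is meant to absorb; it would be worth saying explicitly, as you do, that the $U_{\mathbb{R}}$-invariance of $\mathrm{d}^{\times}u$ is what makes the conclusion about dependence only on the class in $Cl^{+}_{F}$ legitimate. Your observation that the computation of $(\alpha I)^{\ast}$ needs no hypothesis on $N(\alpha)$, and that $N(\alpha)=1$ in part 1 serves only to keep $\alpha u$ inside $U_{\mathbb{R}}$, is also accurate.
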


\subsection{Higher plectic Green functions}
   As explained in the introduction, we will use Goncharov's idea for Hodge correlators to construct new objects with multiple variables. More precisely, we will use the plectic Green functions as fundamental block to construct an object $G_{I, \Gamma, S}(\cdot , \cdot)$ on 
\[O^{\times}_{F,+}\backslash (F_{\mathbb{R}}/I)^{S}\times U_{\mathbb{R}} = (F_{\mathbb{R}}/I)^{S}\times_{O^{\times}_{F,+}}EO^{\times}_{F,+},\]
which depends on a graph $\Gamma $ and a subset $S$ of the set of its vertices.  

\begin{defi}(\textbf{Higher plectic Green function})\label{defi:1} \\
Let $\Gamma$ be a finite connected non-oriented graph, $V(\Gamma)$ the set of vertices and $E(\Gamma)$ the non-empty set of edges.
Let $S\subset V(\Gamma) $ be a subset of the set of vertices. Loops are forbidden here (i.e. the endpoints of each edge are distinct), but multiple edges are allowed. For each vertex $v\in V(\Gamma)$, let $x_{v}\in F_{\mathbb{R}}/I$ be a variable which decorates the vertex $v$; for each edge $e\in E(\Gamma)$, 
we fix an orientation $\overrightarrow{e} = (v_{0}(e)\longrightarrow v_{1}(e))$ here we denote $v_{1}(e)$ (resp. $v_{0}(e)$) the head (resp. the tail) of the arrow.
We associate an element $n_{e} \in I^{\ast}\setminus\{0\}$ to the edge $e$. Then for each edge $e$, we can associate a plectic Green function
\[g_{I}(x_{v_{1}(e)}-x_{v_{0}(e)},u) = \lim_{\eta \rightarrow 0^{+}} \sum_{n_e\in I^{\ast}\backslash \{0\}}\frac{e^{2\pi i\mathrm{Tr}(n_e(x_{v_{1}(e)}-x_{v_{0}(e)}))}}{||un_e||^{r+\eta}}.\]

  We define the \textbf{higher plectic Green function} attached to $(\Gamma, S)$ as
\[ G_{I, \Gamma, S}(\{x_{v}\}_{v\in S}, u) = \int_{(F_{\mathbb{R}}/I)^{|V(\Gamma)\smallsetminus S|}}\prod_{e\in E(\Gamma)}g_{I}(x_{v_{1}(e)}-x_{v_{0}(e)},u)\prod_{v\in V(\Gamma)\smallsetminus S}dx_{v}, \]
where $x_{v}\in F_{\mathbb{R}}/I, u\in U_{\mathbb{R}}$ and $dx$ is a fixed Haar measure en $F_{\mathbb{R}}$. 
\end{defi}
  
  Roughly speaking, the higher plectic Green function is defined by integration of the product of the basic plectic Green function associated to each edge respect to all the variables decorating the vertex $v \in V(\Gamma) \setminus S$. We should also mention that the higher plectic Green function does not depend on the orientation that we fix for each edge $e$.

\begin{re}\label{re:tor}
We are going to consider only the values of $\{x_v\}_{v\in S}$ lying in the torsion group of $F_{\mathbb{R}}/I$. This is equivalent to the existence of a subgroup of finite index $U \subset O^{\times}_{F,+}$ fixing each $x_v$.
\end{re}

  There are variants of these functions depending
on an additional choice of multisigns $\nu_{e}: J_{F} \longrightarrow \{0, 1\}$ (and an orientation)
for each edge $e$, which means that we can replace $g_{I}(\cdot, \cdot)$ by $g_{I,\nu}(\cdot,\cdot)$ (or $g^{\nu}_{I}(\cdot,\cdot)$) in the definition of the higher
plectic Green function. 

\begin{re} 
By the very definition, $G_{I,\Gamma, S}(\cdot, \cdot)$ inherits a $O^{\times}_{F,+}$-invariant property. $\forall \epsilon \in O^{\times}_{F,+}$, we have 
\[G_{I,\Gamma, S}(\{\epsilon x_{v}\}_{v\in S},\epsilon u) = G_{I,\Gamma,S}(\{x_{v}\}_{v\in S}, u).\]
Therefore, our higher plectic Green function $G_{I,\Gamma,S}(\cdot,\cdot)$ is indeed defined on 
\[O^{\times}_{F,+}\backslash (F_{\mathbb{R}}/I)^{S}\times U_{\mathbb{R}} = (F_{\mathbb{R}}/I)^{S}\times_{O^{\times}_{F,+}}EO^{\times}_{F,+},\]
which depends on the given graph $\Gamma $ and the subset $S$ of the set of its vertices.  
Here $EO^{\times}_{F,+}$ is the total space of the group $O^{\times}_{F,+}$. We can therefore apply the plectic principle and later we will compute the trace 
\[\int_{U_{\mathbb{R}}/U}G_{I,\Gamma, S}(\{x_v\}_{v\in S}, u)\ \mathrm{d}^{\times}u,\]
when $\{x_v\}_{v\in S}$ and $U$ are as in Remark \ref{re:tor} (see the discussion of Hecke's formula in Section 3).
\end{re}

\subsubsection{Fourier expansion of higher plectic Green functions}
Higher plectic Green functions are defined by integration, however their Fourier expansions are just series. For achieving this, we need the following lemma

 \begin{lem}(\textbf{Convolution on $F_{\mathbb{R}}/I$}) \label{lem:formalconvolution}
   
    Let $\chi_{n}(x) = e ^{2\pi i\mathrm{Tr}(xn)}$.  
 If $A(x) = \sum_{m\in I^{\ast}}a(m)\chi_{m}(x)$ and $B(y) = \sum_{n\in I^{\ast}}b(n)\chi_{n}(y)$, then
\[ \int_{F_{\mathbb{R}}/I}A(x-y)B(y)dy = \int_{F_{\mathbb{R}}/I}\sum_{m,n\in I^{\ast}}a(m)b(n)\chi_{m}(x-y)\chi_{n}(y)dy \]
\[= vol(F_{\mathbb{R}}/I)\sum_{n\in I^{\ast}}a(n)b(n)\chi(n).\]
\end{lem}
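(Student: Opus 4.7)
The lemma is essentially the Parseval/orthogonality identity for characters of the compact torus $F_{\mathbb{R}}/I$, so the plan is to reduce it to that statement. The plan is to substitute the two Fourier expansions into the integral, interchange summation and integration, and then compute each elementary integral using orthogonality of additive characters. Since the label of the lemma (\emph{formal convolution}) suggests that convergence is not the focus, I would treat the manipulation formally, noting at the end the hypotheses under which it is rigorously justified (e.g.\ one of the two series being absolutely convergent, or working in the sense of distributions as is done with the plectic Green functions themselves).

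First I would write
\[
 \int_{F_{\mathbb{R}}/I} A(x-y)B(y)\,dy \;=\; \sum_{m,n\in I^{\ast}} a(m)b(n)\int_{F_{\mathbb{R}}/I} \chi_{m}(x-y)\chi_{n}(y)\,dy,
\]
using $\chi_{m}(x-y)=\chi_{m}(x)\chi_{-m}(y)$ and $\chi_{-m}(y)\chi_{n}(y)=\chi_{n-m}(y)$, so that the inner integral becomes $\chi_{m}(x)\int_{F_{\mathbb{R}}/I}\chi_{n-m}(y)\,dy$. The second step is the core computation: for any $\ell\in I^{\ast}$, the character $\chi_{\ell}(y)=e^{2\pi i\operatorname{Tr}(\ell y)}$ is well defined on $F_{\mathbb{R}}/I$ precisely because $\operatorname{Tr}(\ell I)\subset\mathbb{Z}$, i.e.\ because $\ell\in I^{\ast}=\mathscr{D}^{-1}I^{-1}$. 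The standard orthogonality of characters on a compact abelian group yields
\[
 \int_{F_{\mathbb{R}}/I}\chi_{\ell}(y)\,dy \;=\; \mathrm{vol}(F_{\mathbb{R}}/I)\,\delta_{\ell,0},
\]
so only the diagonal terms $m=n$ survive. Since $m,n\in I^{\ast}$ and $n-m\in I^{\ast}$, no divisibility subtleties arise.

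Collecting the surviving terms gives $\sum_{n\in I^{\ast}}a(n)b(n)\chi_{n}(x)\,\mathrm{vol}(F_{\mathbb{R}}/I)$, which is the claimed formula (reading the final $\chi(n)$ in the statement as $\chi_{n}(x)$). The one genuinely delicate point is the interchange of summation and integration; I would handle it by first assuming, say, $A\in L^{2}$ so that the Fourier series converges in $L^{2}$ and the linearity of the inner product with $B(y)$ justifies the manipulation, and then observe that the same formula extends to the distributional setting used for the plectic Green functions by pairing against test functions. Apart from this analytic point, the argument is a straightforward application of character orthogonality.
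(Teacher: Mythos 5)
Your proposal is correct and follows essentially the same route as the paper: substitute the Fourier expansions, use orthogonality of the characters $\chi_{\ell}$ on $F_{\mathbb{R}}/I$ to kill the off-diagonal terms $m\neq n$, and collect the diagonal contribution with the factor $\mathrm{vol}(F_{\mathbb{R}}/I)$. Your additional remarks on justifying the interchange of sum and integral (and on reading $\chi(n)$ as $\chi_{n}(x)$) go slightly beyond what the paper records, but the core argument is identical.
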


 \begin{proof}
The Proof of Lemma \ref{lem:formalconvolution} is straightforward. 
\[ \int_{F_{\mathbb{R}}/I}A(x-y)B(y)dy \]
\[=  \int_{F_{\mathbb{R}}/I}\sum_{m,n\in I^{\ast}}a(m)b(n)e ^{2\pi i\mathrm{Tr}((x-y)m)}e ^{2\pi i\mathrm{Tr}(yn)}dy \]
\[= \int_{F_{\mathbb{R}}/I}\sum_{m = n}a(m)b(n)e ^{2\pi i\mathrm{Tr}(xm)}e ^{2\pi i\mathrm{Tr}(y(n-m))}dy + \int_{F_{\mathbb{R}}/I}\sum_{m \neq n}a(m)b(n)e ^{2\pi i\mathrm{Tr}(xm)}e ^{2\pi i\mathrm{Tr}(y(n-m))}dy.\]
If $ m \neq n $, then
\[\int_{F_{\mathbb{R}}/I}b(n)e ^{2\pi i\mathrm{Tr}(y(n-m))}dy = 0.\]
Therefore we obtain
\[\int_{F_{\mathbb{R}}/I}A(x-y)B(y)dy  =  \int_{F_{\mathbb{R}}/I}\sum_{m = n}a(m)b(n)e ^{2\pi i\mathrm{Tr}(xm)}e ^{2\pi i\mathrm{Tr}(y(n-m))}dy  \] 
\[ = \int_{F_{\mathbb{R}}/I}\sum_{n \in I^{\ast}}a(n)b(n)e ^{2\pi i\mathrm{Tr}(xn)}\cdot \int_{F_{\mathbb{R}}/I}1\ dy = vol(F_{\mathbb{R}}/I)\sum_{n\in I^{\ast}}a(n)b(n)e ^{2\pi i\mathrm{Tr}(xn)}. \]
 \end{proof}

 We will now apply Lemma (\ref{lem:formalconvolution}) to higher plectic Green function. We put
\begin{align*}
 n : E \longrightarrow I^{\ast}\setminus \{0\} ; \quad e \longmapsto n_{e} .
\end{align*}

  By the definition of the higher plectic Green function we have
\begin{align*}\label{int:1}
& G_{I,\Gamma,S}(\{x_{v}\}_{v\in S}, u)  = \\
\lim _{\eta \rightarrow 0^{+}}\sum_{n:E(\Gamma)\rightarrow I^{\ast}\setminus\{0\}}
\prod_{e\in E(\Gamma)}||un_{e}||^{-r-\eta} & \int_{(F_{\mathbb{R}}/I)^{V(\Gamma)\smallsetminus S}}e ^{2\pi i\mathrm{Tr}(\sum_{e\in E(\Gamma)}n_{e}(x_{v_{1}(e)}-x_{v_{0}(e)}))} \prod_{v\in V(\Gamma)\smallsetminus S}dx_{v},
\end{align*}

   If $\mathbb{Z}[X]$ denotes the free abelian group on a set X. We define the chain complex for the graph $\Gamma$ as
\[ \delta : C_1(\Gamma) = \mathbb{Z}[E(\Gamma)] \longrightarrow C_0(\Gamma) = \mathbb{Z}[V(\Gamma)],\]
where $\delta : (v_0 \rightarrow v_1) \longmapsto [v_1]-[v_0] $ is the boundary map of the chain complex. 

  We can also define the relative chain complex for $(\Gamma,S)$, namely, $$C_1(\Gamma) \longrightarrow C_0(\Gamma)/C_0(S).$$
We can associate to $n$ the following element $c(n)$ in $C_{1}(\Gamma)\otimes_{\mathbb{Z}}I^{\ast}$ of the graph $\Gamma$.
\[ c(n) = \sum_{e\in E(\Gamma)} n_{e}\cdot \overrightarrow{e} \in C_{1}(\Gamma)\otimes_{\mathbb{Z}}I^{\ast}.\]
If let 
\begin{equation}\label{const:1}
\pi_{v} = \sum_{e\in E(\Gamma),v_{1}(e)=v}n_{e} - \sum_{e\in E(\Gamma),v_{0}(e)=v }n_{e},
\end{equation}
 then 
 \[e ^{2\pi i \mathrm{Tr}(\sum_{e\in E(\Gamma)}n_{e}(x_{v_{1}(e)}-x_{v_{0}(e)}))} = e ^{2\pi i\mathrm{Tr}(\sum_{v\in V(\Gamma)}\pi_{v}x_{v})}. \]
We define the boundary map
$$\partial n : V(\Gamma) \longrightarrow I^{\ast},$$ 
$$ \partial n (v) = \delta c(n) |_{v} = \pi_{v},$$
where $|_{v}$ means taking the coefficient of the vertex $v$.

  By using the previous convolution formula, we conclude that only the terms with $\partial n |_{V(\Gamma)\smallsetminus S} = 0 $ contribute to the integral in Definition (\ref{defi:1}), which means  that
$$ \forall v\in V(\Gamma)\setminus S, \quad \pi_{v} =0 .$$ 

  Note that
\[\{ c(n) \mid \partial n |_{V(\Gamma)\smallsetminus S} = 0 \} = H_{1}(\Gamma, S)\otimes_{\mathbb{Z}}I^{\ast}.\]
Then we get a formal Fourier convolution description of $G_{I,\Gamma,S}(\cdot, \cdot)$. 
  \begin{prop}(\textbf{Fourier Expansion})
$$G_{I,\Gamma,S}(\{x_{v}\}_{v\in S}, u) = vol(F_{\mathbb{R}}/I)^{|V(\Gamma)\smallsetminus S|}\lim _{\eta \rightarrow 0^{+}}\sum\nolimits '_{\{n, c(n) \in H_{1}(\Gamma,S)\otimes I^{\ast}\}}
\frac{e ^{2\pi i\mathrm{Tr}(\sum_{v\in S} (\partial n)_{v}x_{v})}}{\prod_{e\in E(\Gamma)}||un_{e}||^{r+\eta}},$$
where $\sum'$ means that we consider only $n$ such that $c(n)\in H_{1}(\Gamma,S)\otimes I^{\ast}$ and 
$$\forall e \in E(\Gamma), n_{e} \in I^{\ast}\setminus \{0\}.$$
  \end{prop}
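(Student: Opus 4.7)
The plan is to start from the definition of the higher plectic Green function as an integral of $|E(\Gamma)|$ plectic Green functions, substitute the defining Fourier series of each factor, and reduce the multivariable integration to iterated orthogonality of characters via Lemma \ref{lem:formalconvolution}.

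First I would fix the regularization parameter $\eta>0$, for which each plectic Green factor $g_{I}(\cdot,u)$ is absolutely convergent as a Fourier series. The product over the $|E(\Gamma)|$ edges is then dominated by $\prod_{e\in E(\Gamma)}\sum_{n_e\in I^{\ast}\setminus\{0\}}\Vert un_e\Vert^{-r-\eta}$, which is finite. This justifies, by Fubini, interchanging the sum over functions $n: E(\Gamma)\to I^{\ast}\setminus\{0\}$ with the integration over $(F_{\mathbb{R}}/I)^{V(\Gamma)\setminus S}$, yielding the formula already displayed in the excerpt preceding the proposition.

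Next I would regroup the exponential using formula (\ref{const:1}): the identity
\[
\sum_{e\in E(\Gamma)} n_e(x_{v_1(e)}-x_{v_0(e)}) \;=\; \sum_{v\in V(\Gamma)} \pi_v\, x_v \;=\; \sum_{v\in V(\Gamma)} (\partial n)(v)\cdot x_v
\]
makes the integrand factor as $\bigl(\prod_{v\notin S} e^{2\pi i \mathrm{Tr}(\pi_v x_v)}\bigr)\cdot e^{2\pi i \mathrm{Tr}(\sum_{v\in S}\pi_v x_v)}$. The one-variable case of Lemma \ref{lem:formalconvolution} (equivalently, orthogonality of the characters $x\mapsto e^{2\pi i\mathrm{Tr}(\pi x)}$ of $F_{\mathbb{R}}/I$ indexed by $\pi\in I^{\ast}$) gives
\[
\int_{F_{\mathbb{R}}/I} e^{2\pi i\mathrm{Tr}(\pi_v x_v)}\, dx_v \;=\; \begin{cases} \mathrm{vol}(F_{\mathbb{R}}/I) & \text{if } \pi_v = 0,\\ 0 & \text{otherwise.} \end{cases}
\]
Integrating over the $|V(\Gamma)\setminus S|$ vertices outside $S$ thus produces the prefactor $\mathrm{vol}(F_{\mathbb{R}}/I)^{|V(\Gamma)\setminus S|}$ and kills every term except those with $(\partial n)(v)=0$ for all $v\in V(\Gamma)\setminus S$. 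By definition of the relative chain complex $C_1(\Gamma)\to C_0(\Gamma)/C_0(S)$, this vanishing is exactly the condition $c(n)\in H_1(\Gamma, S)\otimes_{\mathbb{Z}}I^{\ast}$, while the surviving exponential is $e^{2\pi i\mathrm{Tr}(\sum_{v\in S}(\partial n)(v)\, x_v)}$. Reinstating the limit in $\eta$ yields the claimed formula.

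The only genuinely delicate point in this plan is the combinatorial bookkeeping: one must confirm that the orientations chosen on the edges (entering the definition of $\delta$) do not affect the final expression, since reversing the orientation of an edge $e$ replaces $n_e$ by $-n_e$ and leaves both $\Vert u n_e\Vert$ and the total exponent $\sum_v \pi_v x_v$ unchanged, and that the surviving index set is exactly $H_1(\Gamma, S)\otimes I^{\ast}$. The analytic interchange step is painless because the proposition itself is stated as a limit in $\eta$, so every manipulation is carried out at fixed $\eta>0$, where absolute convergence and Fubini apply without any further argument.
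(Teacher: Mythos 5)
Your proposal is correct and follows essentially the same route as the paper: substitute the Fourier series of each edge factor, rewrite the exponent via the boundary map $\partial n$, and apply the orthogonality/convolution Lemma \ref{lem:formalconvolution} vertex by vertex over $V(\Gamma)\smallsetminus S$ to produce the volume prefactor and the constraint $c(n)\in H_1(\Gamma,S)\otimes I^{\ast}$. The only addition on your side is the explicit Fubini justification at fixed $\eta>0$ and the remark on orientation independence, which the paper leaves implicit.
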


 \begin{proof}
 The proof is straightforward by applying Lemma (\ref{lem:formalconvolution}). 
 \end{proof}

\subsubsection{Subdivision operation}

 We can replace each $e \in E(\Gamma)$ by a chain of $k_{e} \geq 1$ edges, which is equivalent to that 
we add $k_{e}-1 \geq 0$ new vertices to each edge $e\in E(\Gamma )$ to get a new graph $\Gamma(\underline{k})$ with $|V(\Gamma(\underline{k}))| = |V(\Gamma)| + \sum_{e}(k_{e}-1)$ and the subset $S$ is unchanged. 

For example, the case of $k_e = 3$ is as follows. 
\begin{figure}[h]
 \centering
    \includegraphics[width=0.5\textwidth]{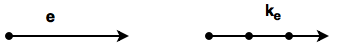}
    \caption{The subdivision of the edge $e$.}
    \label{fig:e}
\end{figure}

Hence we define a subdivision map: 
\begin{defi}(\textbf{Subdivision map})\label{defi:subdiv}
\[\underline{k}: E(\Gamma) \longrightarrow \mathbb{N}\setminus \{0\}\]
\[\underline{k} : e \longmapsto k_{e}\]
\end{defi}
Combining a subdivision of edges and the Fourier expansion, we get 
\begin{prop} (\textbf{Subdivision of edges})\label{prop:2}
$$ G_{I, \Gamma(\underline{k}), S}(\{x_{v}\}_{v\in S}, u) =  $$
$$ \mathrm{vol}(F_{\mathbb{R}}/I)^{|V(\Gamma)\backslash S|+|\underline{k}|-|E(\Gamma)|}\lim_{\eta \rightarrow 0^{+}} \sum_{\{n, c(n) \in H_{1}(\Gamma,S)\otimes I^{\ast}\}}\nolimits '\frac{e^{2\pi i\mathrm{Tr}(\sum_{v\in S}(\partial n)_{v}x_{v})}}{\prod_{e\in E(\Gamma)}||un_{e}||^{k_{e}(r+\eta)}},  $$
where $ x\in F_{\mathbb{R}}/\mathbb{R}, u\in U_{\mathbb{R}}$, $|\underline{k}| = \sum_{e\in E(\Gamma)} |k_{e} |$, $n = (n_e)_{e\in E(\Gamma)}$,
$$\partial n : V(\Gamma) \longrightarrow I^{\ast},$$ 
$$ \partial n (v) = \sum_{e\in E(\Gamma),v_{1}(e)=v}n_{e} - \sum_{e\in E(\Gamma),v_{0}(e)=v }n_{e}, $$
and $\sum '$ means that $\partial n$ is supported at $S$ and each $n_e$ is nonzero. 
\end{prop}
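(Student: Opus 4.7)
The plan is to apply the Fourier Expansion Proposition directly to the subdivided graph $\Gamma(\underline{k})$, which is itself a finite connected non-oriented graph with $|V(\Gamma(\underline{k}))| = |V(\Gamma)| + |\underline{k}| - |E(\Gamma)|$ vertices and $|\underline{k}|$ edges, while the distinguished subset $S$ is unchanged. Labelling the edges of $\Gamma(\underline{k})$ by $\tilde n : E(\Gamma(\underline{k})) \to I^{\ast}\setminus\{0\}$, the previous proposition immediately yields a Fourier series for $G_{I,\Gamma(\underline{k}),S}$ in which the prefactor is $\mathrm{vol}(F_{\mathbb{R}}/I)^{|V(\Gamma)\setminus S|+|\underline{k}|-|E(\Gamma)|}$ and the summation is constrained by $c(\tilde n)\in H_{1}(\Gamma(\underline{k}),S)\otimes I^{\ast}$.

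Next, I would exploit the trivial local topology of the inserted chains. For each edge $e\in E(\Gamma)$, orienting the replacement chain of length $k_{e}$ compatibly with $\overrightarrow{e}$, each of the $k_{e}-1$ new interior vertices $v$ lies in $V(\Gamma(\underline{k}))\setminus S$ and has precisely one incoming and one outgoing edge. The flow condition $(\partial\tilde n)_{v}=0$ at such a vertex forces the two adjacent edge labels to coincide, and iterating along the chain shows that all $k_{e}$ labels equal a single value $n_{e}\in I^{\ast}\setminus\{0\}$. This is exactly the natural isomorphism $H_{1}(\Gamma(\underline{k}),S)\cong H_{1}(\Gamma,S)$ induced by contracting each chain back to a single edge, so the admissible labellings $\tilde n$ are in bijection with maps $n : E(\Gamma)\to I^{\ast}\setminus\{0\}$ satisfying $c(n)\in H_{1}(\Gamma,S)\otimes I^{\ast}$.

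Finally, substituting this bijection into the Fourier expansion for $\Gamma(\underline{k})$ collapses the product over $\tilde e\in E(\Gamma(\underline{k}))$: the chain replacing $e$ contributes $k_{e}$ identical factors, giving $\|un_{e}\|^{-k_{e}(r+\eta)}$. The residual boundary $(\partial\tilde n)_{v}$ at any $v\in S$ equals $(\partial n)_{v}$, so the exponential character $e^{2\pi i\mathrm{Tr}(\sum_{v\in S}(\partial n)_{v}x_{v})}$ is preserved verbatim. Collecting these simplifications produces the stated formula. I do not anticipate a serious analytic obstacle here since the argument is a direct transcription of the Fourier Expansion Proposition; the only delicate point is the bookkeeping of volume and norm exponents, which is handled by the degree-two argument on interior chain vertices described above.
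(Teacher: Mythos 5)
Your argument is correct and is precisely the route the paper intends: it states the proposition as following from ``combining a subdivision of edges and the Fourier expansion,'' i.e.\ applying the Fourier Expansion Proposition to $\Gamma(\underline{k})$ and using the valency-two flow condition at the inserted vertices to collapse each chain, exactly as you do. Your bookkeeping of the volume exponent $|V(\Gamma)\setminus S|+|\underline{k}|-|E(\Gamma)|$ and of the norm exponent $k_{e}(r+\eta)$ is also correct.
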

\begin{re} \label{sign}
We can also add multisigns here as we did for plectic Green functions before. The modified higher plectic Green function can be defined as
\[G^{\nu}_{I, \Gamma(\underline{k}), S}(\{x_{v}\}_{v \in S}, u) = \]
\[ vol(F_{\mathbb{R}}/I)^{|V(\Gamma)\backslash S|+|\underline{k}|-|E(\Gamma)|}\lim_{\eta \rightarrow 0^{+}}\sum_{\{n, c(n) \in H_{1}(\Gamma,S)\otimes I^{\ast}\}}\nolimits '\mathrm{sgn}(n_e)^{\nu(e)}\frac{e^{2\pi i\mathrm{Tr}(\sum_{v\in S}(\partial n)_{v}x_{v})}}{\prod_{e\in E(\Gamma)}||un_{e}||^{k_{e}(r+\eta)}}, \]
or as
$$
G_{I, \nu,\Gamma(\underline{k}), S}(\{x_{v}\}_{v \in S}, u) = $$ 
$$ vol(F_{\mathbb{R}}/I)^{|V(\Gamma)\backslash S|+|\underline{k}|-|E(\Gamma)|}\lim_{\eta \rightarrow 0^{+}} \sum_{\substack{\{n, c(n) \in H_{1}(\Gamma,S)\otimes I^{\ast}\} \\ \mathrm{sgn}(n_e) = (-1)^{\nu(e)}}}\nolimits '\frac{e^{2\pi i\mathrm{Tr}(\sum_{v\in S}(\partial n)_{v}x_{v})}}{\prod_{e\in E(\Gamma)}||un_{e}||^{k_{e}(r+\eta)}}.
$$
\end{re}

\begin{re}
We will only consider the graphs whose internal vertices' valency is no smaller than $3$, due to the subdivision map.
\end{re}

\section{The generalized multiple zeta values}

\subsection{The generalized multiple zeta values} 
  
\begin{defi}\label{defi:2}
    We define a new multivariable function associated to the ideal $I$ and the combinatorial data $(\Gamma, S)$ as follows
\[ \mathscr{F}_{I,\Gamma, S}(\{x_{v}\}_{v\in S}) = (O^{\times}_{F,+}:U)^{-1}\int_{U_{\mathbb{R}}/U}G_{I,\Gamma,S}(\{x_{v}\}_{v\in S}, u )\mathrm{d}^{\times}u,\]
where $U\subset O^{\times}_{F,+}$ is a subgroup of finite index and $x_v \in \left(F_{\mathbb{R}}/I\right)^{U}$ for all $v\in S$, 
\[U_{\mathbb{R}} = \{u = (u_{1},\ldots ,u_{r}) \in (\mathbb{R}^{\times}_{+})^{r} \vert \quad \prod^{r}_{j=1} u_{j} = 1 \},\]
$$\mathrm{d}^{\times}u = \frac{du_{1}\cdots du_{r-1}}{u_{1}\cdots u_{r-1}}, $$ 
and $U_{\mathbb{R}}/U \cong (\mathbb{S})^{r-1}$ is the classifying space of $U \cong \mathbb{Z}^{r-1}$. 
  
  In the same way, we can define $\mathscr{F}^{\nu}_{I,\Gamma, S}(\{x_{v}\}_{v\in S}) $ and $\mathscr{F}_{I,\nu,\Gamma, S}(\{x_{v}\}_{v\in S}) $.
\end{defi}

\begin{re} 

\end{re}

\begin{defi}(\textbf{Generalized Multiple Zeta Values}) \label{defi:3}
   
   The generalized multiple zeta value is defined as 
\[Z_{I}(\Gamma, S) = \mathscr{F}_{I,\Gamma,S}(\{0\}_{v\in S}).\]

Similarily, we have 
\[Z^{\nu}_{I}(\Gamma, S) = \mathscr{F}^{\nu}_{I,\Gamma,S}(\{0\}_{v\in S}),\]
and 
\[Z_{I,\nu}(\Gamma, S) = \mathscr{F}_{I,\nu,\Gamma,S}(\{0\}_{v\in S}).\]
\end{defi}

\subsection{The Hecke transform}

In the integral $\int_{U_{\mathbb{R}}/U}$, the $U$-invariance of $G_{I,\nu,\Gamma,S}(\{x_{v}\}_{v\in S}, u )$ enables us to firstly consider the integral 
$\int_{U_{\mathbb{R}}}$. This calculation necessitates the Hecke transform.

\begin{defi-prop}[\textbf{The Hecke transform \cite{NS}}] \label{def:hec}
Let $ U_{\mathbb{R}} \subset (\mathbb{R}^{\times}_{+})^{r}$ be the subgroup 
\[ U_{\mathbb{R}} = \{u = (u_{1},\ldots ,u_{r}) \in (\mathbb{R}^{\times}_{+})^{r} \vert \prod^{r}_{j=1} u_{j} = 1 \}. \]
Let $\Vert \cdot \Vert $ be the 
Euclidean norm on $\mathbb{C}^{r}$, on which $U_{\mathbb{R}}$ acts by multiplication. Let $(p)_{j}\in \mathbb{Z}^{r}$, $ p = \sum p_{j}$. Then
for any $x = (x_1, \ldots, x_r)\in (\mathbb{C}^{\times})^{r}$ and $s \in \mathbb{C}$, $Re(s) > 0 $, \textbf{the} \textbf{Hecke} \textbf{transform} is 
\[ \int_{U_{\mathbb{R}}} \vert | ux \vert | ^{-2s}\prod_{j} u_{j}^{-2p_{j}}\ \mathrm{d}^{\times}u = \frac{2^{1-r}}{r\Gamma(s)}\prod_{j} \Gamma(\frac{p+s}{r} - p_{j})\vert x_{j}\vert ^{2(p_{j}-(p+s)/r)}.\]
\end{defi-prop}
\begin{proof}
See \cite{NS}.
\end{proof}

Let us see one special example, namely the integral of the plectic Green function 
$$g_{I}^{\nu}(x,u) = \lim_{\delta \rightarrow 0^{+}}\sum_{n\in I^{\ast}\smallsetminus\{0\}}\mathrm{sgn}(n)^{\nu}\frac{e^{2\pi i\mathrm{Tr}(nx)}}{\Vert un \Vert^{r+\delta}}, \quad x \in F_{\mathbb{R}}/I,  u \in U_{\mathbb{R}} .$$

The Hecke transform of the plectic Green function is behind the proof of Hecke's formula, as we are now going to explain.
\begin{theo}[\textbf{New interpretation of the Hecke formula}] \label{thm:hecke}

Let us suppose that $x \in F_{\mathbb{R}}/I$ is a torsion element, then there exists a subgroup $U$ of $\subset O^{\times}_{F,+}$ of finite index such that
$$ x \in (F_{\mathbb{R}}/I)^{U},$$
then 
\[\forall \epsilon \in U, \quad g_{I}^{\nu}(x,\epsilon u) = g_{I}^{\nu}(x,u).\]
Then we obtain
\[\int_{U_{\mathbb{R}}/U}g_{I}^{\nu}(x,u)\mathrm{d}^{\times} u = \frac{2^{1-r}\Gamma(1/2)^{r}}{r\Gamma(r/2)}\lim_{\delta\rightarrow 0^{+}}\sum_{n\in (I^{\ast}\smallsetminus\{0\})/U}\mathrm{sgn}(n)^{\nu}\frac{e^{2\pi i\mathrm{Tr}(nx)}}{\vert N(n)\vert^{(r+\delta)/r}}.\]
\end{theo}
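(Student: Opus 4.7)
The plan is to unfold the integral over $U_\mathbb{R}/U$ to an integral over all of $U_\mathbb{R}$ by organizing the series defining $g_I^\nu$ into $U$-orbits, and then to apply the Hecke transform (Definition-Proposition~\ref{def:hec}) exactly once.

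First, I would verify that for $x \in (F_\mathbb{R}/I)^U$ the twist $\mathrm{sgn}(n)^\nu e^{2\pi i \mathrm{Tr}(nx)}$ is constant on each $U$-orbit in $I^\ast \setminus \{0\}$, and that $u \mapsto g_I^\nu(x,u)$ is genuinely $U$-invariant so that the integral over $U_\mathbb{R}/U$ makes sense. For $\epsilon \in U$ the substitution $n \mapsto \epsilon^{-1} n$ preserves $I^\ast \setminus \{0\}$ (since $I^\ast$ is an $O_F$-module), and transforms the $\epsilon u$-series into the $u$-series because (a)~$\|\epsilon u \cdot \epsilon^{-1} n\| = \|u n\|$, (b)~$\mathrm{sgn}(\epsilon^{-1} n)^\nu = \mathrm{sgn}(n)^\nu$ as $\epsilon$ is totally positive, and (c)~$\mathrm{Tr}(\epsilon^{-1} n x) \equiv \mathrm{Tr}(n x) \pmod{\mathbb{Z}}$ because $(\epsilon^{-1}-1)x \in I$ and $n \in I^\ast$.

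Second, for $\delta > 0$ the series is absolutely convergent uniformly on compacta of $U_\mathbb{R}$, so I can regroup it by $U$-orbits
\[ g_I^\nu(x,u) = \sum_{[n] \in (I^\ast \setminus \{0\})/U} \mathrm{sgn}(n)^\nu\, e^{2\pi i \mathrm{Tr}(nx)} \sum_{\epsilon \in U} \frac{1}{\|\epsilon u n\|^{r+\delta}}. \]
Integrating over $U_\mathbb{R}/U$ and unfolding the inner sum against a fundamental domain for $U$ in $U_\mathbb{R}$ gives, by Fubini,
\[ \int_{U_\mathbb{R}/U} g_I^\nu(x,u)\, \mathrm{d}^\times u = \sum_{[n]} \mathrm{sgn}(n)^\nu\, e^{2\pi i \mathrm{Tr}(nx)} \int_{U_\mathbb{R}} \|u n\|^{-(r+\delta)}\, \mathrm{d}^\times u. \]

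Third, I apply the Hecke transform with $p_j = 0$ for all $j$ and $s = (r+\delta)/2$ to obtain
\[ \int_{U_\mathbb{R}} \|u n\|^{-(r+\delta)}\, \mathrm{d}^\times u = \frac{2^{1-r}\, \Gamma(\tfrac{r+\delta}{2r})^{r}}{r\, \Gamma(\tfrac{r+\delta}{2})}\, |N(n)|^{-(r+\delta)/r}, \]
and then let $\delta \to 0^+$ to recover the gamma-factor $\dfrac{2^{1-r}\Gamma(1/2)^r}{r\,\Gamma(r/2)}$ and the exponent $(r+\delta)/r$ in the statement.

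The main obstacle I anticipate is the careful handling of the $\delta \to 0^+$ limit, which must commute with both the orbit sum and the integral over $U_\mathbb{R}/U$. For $\delta > 0$ all manipulations are justified by absolute convergence; at $\delta = 0$ the limiting series is only conditionally convergent, so one should either establish a uniform dominating function after the orbit regrouping, or --- as in Hecke's original argument --- work in the half-plane $\mathrm{Re}(\delta) > 0$ and invoke uniqueness of meromorphic continuation.
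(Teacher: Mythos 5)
Your proposal is correct and follows essentially the same route as the paper: unfold the integral over $U_{\mathbb{R}}/U$ to an integral over $U_{\mathbb{R}}$ by grouping the series into $U$-orbits, apply the Hecke transform with all $p_j=0$ and $s=(r+\delta)/2$, and identify $\prod_j|n_j| = |N(n)|$ before letting $\delta\to 0^+$. The only difference is that you spell out the $U$-invariance check and the convergence issues in the unfolding, which the paper passes over silently.
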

\begin{proof}
By the definition
\[\int_{U_{\mathbb{R}}/U}g_{I}^{\nu}(x,u)\mathrm{d}^{\times} u  = \lim_{\delta \rightarrow 0^{+}}\sum_{n\in (I^{\ast}\smallsetminus\{0\})/U}e^{2\pi i\mathrm{Tr}(nx)}\int_{U_{\mathbb{R}}}\frac{1}{\Vert un\Vert^{r+\delta}}\mathrm{d}^{\times} u .  \]
By the Hecke transform, we have 
\[\int_{U_{\mathbb{R}}}\frac{1}{\Vert un\Vert^{r+\delta}}\mathrm{d}^{\times} u = \frac{2^{1-r}\left(\Gamma\left(\frac{r+\delta}{2r}\right) \right)^{r}}{r\Gamma\left(\frac{r+\delta}{2}\right)}\frac{1}{\prod^{r}_{j=1}\vert n_{j}\vert^{(r+\delta)/r}}\]
then we obtain
\[\int_{U_{\mathbb{R}}/U}g_{I}^{\nu}(x,u)\mathrm{d}^{\times} u  = \frac{2^{1-r}\Gamma(1/2)^{r}}{r\Gamma(r/2)}\lim_{\delta\rightarrow 0^{+}}\sum_{n\in (I^{\ast}\smallsetminus\{0\}/U)}\mathrm{sgn}(n)^{\nu}\frac{e^{2\pi i\mathrm{Tr}(nx)}}{\prod^{r}_{j=1}\vert n_{j}\vert^{(r+\delta)/r}} .\]
Note that
\[N(n) = N_{F/\mathbb{Q}}(n) = \prod^{r}_{j=1}n_{j},\]
hence 
\[\int_{U_{\mathbb{R}}/U}g_{I}^{\nu}(x,u)du = \frac{2^{1-r}\Gamma(1/2)^{r}}{r\Gamma(r/2)}\lim_{\delta\rightarrow 0^{+}}\sum_{n\in (I^{\ast}\smallsetminus\{0\})/U}\mathrm{sgn}(n)^{\nu}\frac{e^{2\pi i\mathrm{Tr}(nx)}}{\vert N(n)\vert^{(r+\delta)/r}}.\]
\end{proof}
It is easy to see that the Hecke transform of the basic plectic Green function with signature delivers a linear combination of special values $L(1,\chi_{F})$ for certain Dirichlet characters $\chi_{F}$ of $F$ of signature $\nu$.  The Hecke transform can give a natural reinterpretation of Hecke's formula. That is why such
a formula is called by Nekov\'a\v r and Scholl the Hecke transform \cite{NS}.


\vspace{1cm}
\section{Relation to classical MZVs}

  In this section, we focus on the case $F= \mathbb{Q}$ and show how we recover the classical objects.
\begin{theo}[\textbf{Relation to multiple zeta values}]\label{theo:1}
 Let $F $ be  the rational field $ \mathbb{Q}$ and the ideal $I$ in consideration is $\mathbb{Z}$.
Let $\Gamma$ be any tree with rank $d = \mathrm{rank}( H_1(\Gamma, S))\geq 2$, where $S= \partial \Gamma$. Assume that we are given a "sign" map  $\nu : E(\Gamma) \longrightarrow \{ 0, 1\}$ and a subdivision map $\underline{k}: E(\Gamma) \longrightarrow \mathbb{N} \setminus \{0\}$ as in Remark \ref{sign} and in Definition \ref{defi:subdiv}, respectively. Then the generalized multiple zeta value $Z_{I,\nu}(\Gamma(\underline{k}), \partial \Gamma(\underline{k})) $ can be expressed as a finite $\mathbb{Z}$-linear combination of classical multiple zeta values (MZVs) of depth $d$ and weight $ |\underline{k}| =\sum\limits_{e}k_e$.
\end{theo}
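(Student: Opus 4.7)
Since $F=\mathbb{Q}$, we have $r=1$ and $U_{\mathbb{R}}=\{1\}$, so any subgroup $U\subset O^{\times}_{F,+}=\{1\}$ is trivial and the integral over $U_{\mathbb{R}}/U$ in Definition~\ref{defi:2} degenerates. Combining Definitions~\ref{defi:2}--\ref{defi:3} with the signed Fourier expansion of Proposition~\ref{prop:2} (Remark~\ref{sign}), using $I^{*}=\mathbb{Z}$ and $\mathrm{vol}(\mathbb{R}/\mathbb{Z})=1$, yields
\[
Z_{I,\nu}\bigl(\Gamma(\underline{k}),\partial\Gamma\bigr)=\lim_{\eta\to 0^+}\sideset{}{'}\sum_{n}\prod_{e\in E(\Gamma)}\frac{1}{|n_e|^{k_e(1+\eta)}},
\]
where the primed sum runs over edge-labelings $n\colon E(\Gamma)\to\mathbb{Z}\setminus\{0\}$ with $c(n)\in H_1(\Gamma,\partial\Gamma)\otimes\mathbb{Z}$ and $\mathrm{sgn}(n_e)=(-1)^{\nu(e)}$ for every $e$. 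Thus the problem is reduced to evaluating a lattice sum on the relative $H_1$ of the tree.

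Because $\Gamma$ is a tree, $H_1(\Gamma,\partial\Gamma)\otimes\mathbb{Z}$ is free of rank $d=|\partial\Gamma|-1$; fixing a root $v_0\in\partial\Gamma$, the unique paths $\gamma_i$ from $v_0$ to the other leaves $v_1,\ldots,v_d$ form a basis, so $c(n)=\sum_i m_i\gamma_i$ with $(m_i)\in\mathbb{Z}^d$ and $n_e=\sum_i\epsilon_{e,i}m_i$ for $\epsilon_{e,i}\in\{0,\pm 1\}$. Concretely, removing $e$ splits $\partial\Gamma$ into two components and $n_e$ equals $\pm$ the sum of the $m_i$'s whose leaves lie on the side not containing $v_0$. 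I would then decompose $\mathbb{Z}^d$ into the $2^d$ sign orthants of $(m_i)$ and further into the finitely many chambers determined by strict inequalities between the absolute values of the linear forms $n_e$. Each chamber either violates the sign constraint $\mathrm{sgn}(n_e)=(-1)^{\nu(e)}$ (and hence contributes $0$) or admits a unimodular substitution converting the summation into one over strictly increasing positive integers $0<N_1<\cdots<N_d$ in which each $|n_e|$ becomes a $\mathbb{Z}_{\geq 0}$-linear combination of the $N_j$ of the form dictated by the rooted tree.

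On each compatible chamber the resulting sum is a Mordell--Tornheim--Witten type tree sum, and I would reduce it to classical MZVs by induction on the number of internal vertices of $\Gamma$: starting at an internal vertex of minimal depth (whose descendants are all leaves), I apply the partial-fraction identity
\[
\frac{1}{a^{p}b^{q}}=\sum_{j=0}^{q-1}\binom{p-1+j}{j}\frac{1}{(a+b)^{p+j}b^{q-j}}+\sum_{j=0}^{p-1}\binom{q-1+j}{j}\frac{1}{(a+b)^{q+j}a^{p-j}},
\]
which collapses a $Y$-configuration into a straight edge without changing either the total weight $|\underline{k}|$ or the number of summation indices $d$. After finitely many steps the tree is reduced to a single path, whose associated sum is precisely a classical MZV of depth $d$ and weight $|\underline{k}|$. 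Summing the contributions of the finitely many compatible chambers and passing to the limit $\eta\to 0^+$ term by term (legitimate since each resulting depth-$d$, weight-$|\underline{k}|$ MZV is absolutely convergent) gives the required $\mathbb{Z}$-linear combination. The main obstacle is this final combinatorial reduction: one must verify that the partial-fraction induction, combined with the chamber bookkeeping, always produces convergent MZVs of depth exactly $d$ with integer coefficients and does not spawn any divergent boundary terms; the tree hypothesis and the valency-$\geq 3$ condition on internal vertices are what make this leaf-to-root induction terminate cleanly, since for graphs with cycles the boundary relations would produce loop sums outside the scope of this argument.
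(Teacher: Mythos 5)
Your proposal follows essentially the same route as the paper's proof: reduce via the Fourier expansion to a Mordell--Tornheim--Witten type lattice sum attached to the tree, then induct on the number of internal vertices, collapsing a cherry at each step by the partial-fraction identity you quote, which is exactly the paper's ``Eisenstein trick'' (Formula \ref{for:1}). The ``main obstacle'' you flag at the end --- verifying that the induction together with the sign/ordering bookkeeping always produces convergent depth-$d$, weight-$|\underline{k}|$ MZVs with integer coefficients --- is precisely the content of the paper's detailed computation of the terms $a^k_1$, $b^k_1$ and the permutations $\tau_k$ acting on the nested denominators $n(1,j)$, so your sketch is a correct outline of the same argument rather than an alternative to it.
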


\begin{theo}(\textbf{Relation to multiple polylogarithms})\label{thm:poly}\\
If $F = \mathbb{Q}, I^{\ast}=\mathbb{Z}$, $x_{v} \in \frac{1}{N}\mathbb{Z}/\mathbb{Z}$, then
$G_{I,\nu,\Gamma, \partial \Gamma}(\{x_{v}\}_{v\in \partial \Gamma}, 1)$ is a finite $\mathbb{Z}$-linear combination of the 
values of multiple polylogarithms of depth $d = \mathrm{rank}( H_1(\Gamma, S))$ evaluated at some $N$-th roots of unity.
\end{theo}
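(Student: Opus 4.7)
The plan is to exploit the Fourier expansion provided by Proposition \ref{prop:2}. For $F = \mathbb{Q}$ one has $r = 1$ and $U_{\mathbb{R}}$ trivial, so specializing to $u = 1$, letting $\eta \to 0^+$, and using $I = I^{\ast} = \mathbb{Z}$ (so that $\mathrm{vol}(F_{\mathbb{R}}/I) = 1$), the multisign version of the expansion reads
\begin{equation*}
G_{I,\nu,\Gamma,\partial\Gamma}(\{x_v\}_{v\in S}, 1) \;=\; {\sum}'\, \prod_e \mathrm{sgn}(n_e)^{\nu(e)}\, \frac{\exp\bigl(2\pi i\sum_{v\in S}(\partial n)_v x_v\bigr)}{\prod_{e\in E(\Gamma)} |n_e|^{k_e}},
\end{equation*}
where the primed sum runs over maps $n \colon E(\Gamma) \to \mathbb{Z}\setminus\{0\}$ with $\partial n$ supported on $S = \partial\Gamma$, i.e.\ over the lattice points of $L := H_1(\Gamma,S;\mathbb{Z})\cong\mathbb{Z}^d$ avoiding the coordinate hyperplanes $\{n_e = 0\}_{e\in E(\Gamma)}$.

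I would fix a $\mathbb{Z}$-basis $\gamma_1,\dots,\gamma_d$ of $L$ and write $n = \sum_j m_j \gamma_j$, so that each $n_e$ becomes an integer linear form $\ell_e(m)$ and each $(\partial n)_v$ an integer linear form $\lambda_v(m)$ in $m \in \mathbb{Z}^d$. Since $x_v \in \tfrac{1}{N}\mathbb{Z}/\mathbb{Z}$, the exponential factor rewrites as $\prod_{j=1}^{d}\zeta_j^{m_j}$ for explicit $N$-th roots of unity $\zeta_j$, and the sum becomes
\begin{equation*}
{\sum}'_{m\in\mathbb{Z}^d}\,\prod_e \mathrm{sgn}(\ell_e(m))^{\nu(e)}\,\frac{\zeta_1^{m_1}\cdots\zeta_d^{m_d}}{\prod_{e\in E(\Gamma)} |\ell_e(m)|^{k_e}}.
\end{equation*}
Next I would decompose $\mathbb{R}^d$ along the real hyperplane arrangement $\{\ell_e = 0\}_{e\in E(\Gamma)}$, simplicially subdivide each open chamber into unimodular cones, and on each such cone apply a $GL_d(\mathbb{Z})$-change of variable to the positive orthant $p\in\mathbb{Z}_{\ge 1}^d$. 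Inside each cone the signs $\mathrm{sgn}(\ell_e(m))$ are constant, the absolute values $|\ell_e(m)|$ become positive integer linear forms $L_e(p)$, and the exponential becomes $\prod_j \eta_j^{p_j}$ with $\eta_j$ again $N$-th roots of unity.

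To pass from these linear-form sums to honest multiple polylogarithms of depth $d$, I would reduce the $|E(\Gamma)|$-fold denominator $\prod_e L_e(p)^{k_e}$ to a $d$-fold one by repeated integer partial-fraction expansion, using identities of the type
\begin{equation*}
\frac{1}{A^{a}B^{b}}\;=\;\sum_{i=0}^{b-1}\binom{a+i-1}{i}\frac{1}{A^{a+i}(A+B)^{b-i}}\;+\;\sum_{j=0}^{a-1}\binom{b+j-1}{j}\frac{1}{B^{b+j}(A+B)^{a-j}},
\end{equation*}
which, applied to two positive linear forms $A,B$ sharing a common variable, trades the pair $(A,B)$ for the pair $(A,A{+}B)$ or $(B,A{+}B)$ with integer coefficients. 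Iterating this procedure until only $d$ independent positive forms remain, then making the standard simplex substitution $q_\ell = p_1+\cdots+p_\ell$, one turns each resulting summand into a multiple polylogarithm
\begin{equation*}
\mathrm{Li}_{s_1,\ldots,s_d}(\eta_1',\ldots,\eta_d')\;=\;\sum_{0<q_1<\cdots<q_d}\frac{{\eta_1'}^{q_1}\cdots{\eta_d'}^{q_d}}{q_1^{s_1}\cdots q_d^{s_d}}
\end{equation*}
at $N$-th roots of unity, with $\mathbb{Z}$-coefficients inherited from the binomial coefficients appearing in the partial-fraction identity. The main obstacle will be the bookkeeping of this last step: for a general graph $\Gamma$ the forms $\{\ell_e\}$ are not in general position, so one must choose both the basis $\gamma_1,\dots,\gamma_d$ of $H_1(\Gamma,S;\mathbb{Z})$ and the order of elimination carefully (guided, say, by a spanning tree of $\Gamma/S$) so that every partial-fraction step combines two positive integer linear forms into a genuine positive sum $A+B$. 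This is what keeps all intermediate coefficients in $\mathbb{Z}$ and ensures that the final iterated sum has exactly $d$ summation indices, as required by the depth claim.
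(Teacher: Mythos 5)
Your proposal runs on the same engine as the paper's proof --- the Fourier expansion of Proposition \ref{prop:2} specialized to $F=\mathbb{Q}$, $u=1$, followed by repeated Eisenstein-type partial fractions on the denominator while the exponential factor rides along untouched and is finally read off as a product of $N$-th roots of unity in the nested variables $q_\ell=p_1+\cdots+p_\ell$. That last observation (numerator invariant under the partial-fraction manipulations, then regroup via $\sum_j n_j x_{v_j}=\sum_l (\sum_{j\le l}n_j)(x_{v_l}-x_{v_{l+1}})$) is exactly the paper's proof of Theorem \ref{thm:poly}; the paper's version is short precisely because it imports the entire denominator reduction from the proof of Theorem \ref{theo:1}. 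Where you genuinely differ is in how you organize that reduction: the paper inducts on the internal vertices of a tree, cutting an internal edge to drop the rank by one and then applying a controlled sequence of Eisenstein steps indexed by explicit permutations $\tau^{(q)}_j$, whereas you propose an a priori decomposition --- choose a basis of $H_1(\Gamma,S;\mathbb{Z})$, cut $\mathbb{R}^d$ along the arrangement $\{\ell_e=0\}$, pass to unimodular cones, and then eliminate. Your route is more uniform and would in principle cover non-tree graphs, which the paper's induction does not address.

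The one genuine gap is the step you yourself flag as ``bookkeeping'': the claim that the partial-fraction elimination can always be ordered so that each step combines two positive forms sharing variables and terminates in a single nested chain $p_1,\,p_1+p_2,\,\dots,\,p_1+\cdots+p_d$ of exactly $d$ forms. This is not routine --- it is the entire combinatorial content of the paper's Theorem \ref{theo:1}, proved there by induction on internal vertices with an explicit coset decomposition $S_d=\coprod_i \tau_i S_{d-1}$ governing which chains appear. For an arbitrary graph the forms $\ell_e$ need not admit such an elimination order without further case analysis, and ``guided by a spanning tree of $\Gamma/S$'' is a heuristic, not an argument. Within the scope the paper actually intends (trees with $S=\partial\Gamma$), the honest fix is to invoke the conclusion of Theorem \ref{theo:1} --- that $\mathscr{O}_{I,\nu}(\Gamma,\partial\Gamma)$ is a $\mathbb{Z}$-combination of terms $\prod_{l}(n_{\gamma\cdot 1}+\cdots+n_{\gamma\cdot l})^{-t^{\gamma}_l}$ over $\gamma\in S_d$ --- rather than re-derive it; with that substitution your argument closes and coincides with the paper's.
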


\begin{re}
 Before proving this result in general we consider first the case when $\Gamma$ is a plane trivalent tree, 
i.e., a tree whose internal vertices are of valency $3$. A plane trivalent tree $\Gamma$ with $m+1$ external vertices has $2m-1$ edges and $m-1$ internal vertices. The rank of $\Gamma$ is $d = \mathrm{rank}( H_1(\Gamma, \partial \Gamma)) = m$. 

A tree is a connected graph with no loops, no multiple edges and $H_1(\Gamma, \mathbb{Z}) =0$.
\end{re}

\begin{figure}[h]
 \centering
    \includegraphics[width=1.0\textwidth]{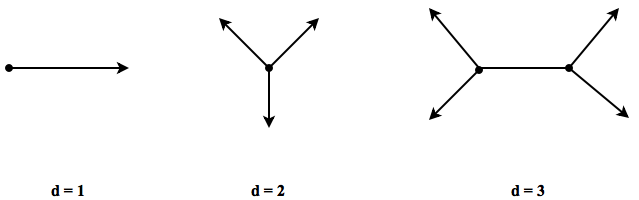}
    \caption{The simplest examples of plane trivalent trees of small ranks.}
    \label{fig:0}
\end{figure}

\begin{re}
In this theorem we only consider the situation that the subset $S = \partial \Gamma$. We would like to explain that such choice is quite general. In fact, $Z_{I,\nu}(\Gamma, S) = 0$ if  $\partial \Gamma \not\subset S$, therefore we must have $\partial \Gamma \subset S$. If one internal vertex is contained in $S$, namely $\partial \Gamma \subsetneqq S$, the situation can be reduced to a new tree with fewer external vertices due to the formal convolution Lemma (\ref{lem:formalconvolution}) 
\end{re}
The Hecke transform is trivial over the rational field. The generalized multiple zeta values are given 
by 
\[Z_{I,\nu}(\Gamma, S) = G_{I,\nu,\Gamma,S}(\{0\}_{v\in S}, 1).\] 

\subsection{Examples}
Before giving the proof in full generality, we will illustrate the statement of Theorem \ref{theo:1} by several examples. 

\begin{re}
In Theorem \ref{theo:1} and \ref{thm:poly}, we will take into account all arbitary trees, and we prove the theorems when the valency of any internal vertex $val(v_{internal}) \geq 3$. We will now explain the case: $val(v_{internal}) = 2$.

 Let $\widetilde{\Gamma}$ be one of the trees in Figure \ref{Fig}. 
\begin{figure}[h]
 \centering
    \includegraphics[width=1.0\textwidth]{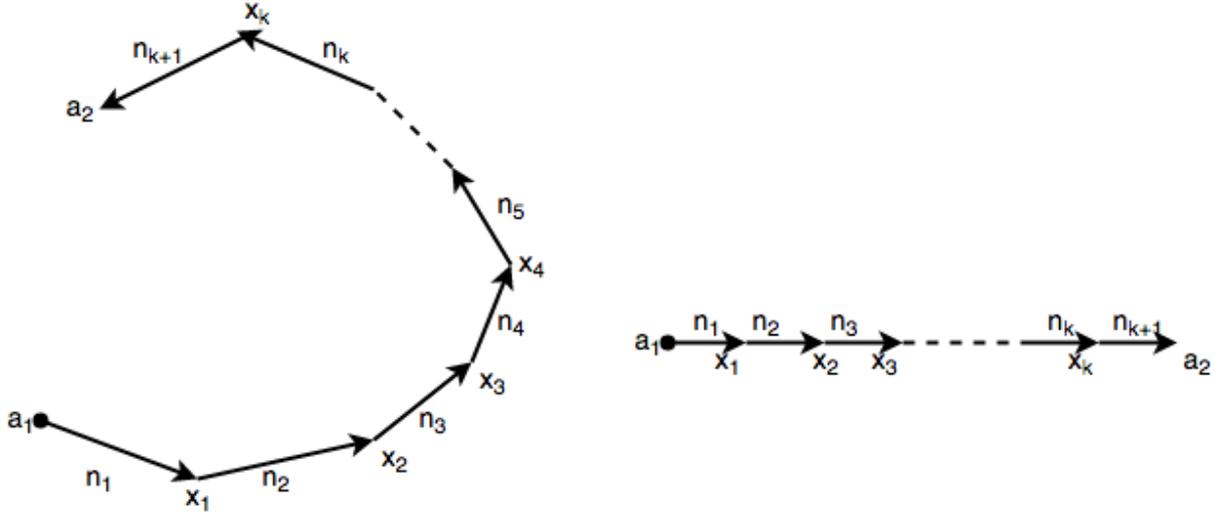}
    \caption{All internal vertices have valency 2}
    \label{Fig}
\end{figure}

We fix the signature $\nu_1 = 0$ for $n_1$, then implicitly all other signatures are also determined. Because of the formal convolution, we have 
$n_{i+1} -n_{i} =0$ for each internal vertex $x_{i}$ ($1 \leq i \leq k$).
\begin{align*}
&G_{\mathbb{Z},\nu,\widetilde{\Gamma}, \partial \widetilde{\Gamma}}(\{a_1,a_2\},1) \\
=& \int_{(\mathbb{R}/\mathbb{Z})^{k}}\sum_{\substack{n_1,\cdots,n_{k+1}\in \mathbb{Z}\smallsetminus\{0\}\\\mathrm{sgn}(n_j) = (-1)^{\nu_{j}}\\n_j= n_{j+1}, 1 \leq j \leq k}}\frac{e^{2\pi i \left(\sum\limits_{2\leq j \leq k}n_j(x_{j}-x_{j-1})) + n_1(x_1 - a_1) + n_{k+1}(a_2 - x_k)\right)}}{\prod^{k+1}_{j=1}\vert n _j \vert}\ dx_1\cdots dx_k\\
=& \sum_{n_1 \in \mathbb{N}\smallsetminus\{0\}} \frac{e^{2\pi in_1 (a_2-a_1)}}{n_1^{k+1}}.
\end{align*}

\[Z_{\mathbb{Z},\nu}(\widetilde{\Gamma}, \partial \widetilde{\Gamma}) = G_{\mathbb{Z},\nu,\widetilde{\Gamma}, \partial \widetilde{\Gamma}}(\{0,0\},1) = \sum_{n_1 \in \mathbb{N}\smallsetminus\{0\}} \frac{1}{n_1^{k+1}} = \zeta (k+1).\]
This result tells us that such a graph delivers the same result as the subdivision map of adding $k$ points for the tree with one edge and two external vertices $a_1, a_2$.
\end{re}

\paragraph{Non-tree case.}

We will show some examples for graphs which are not trees.
\begin{ex} \label{ex:nottree}
Let $\widehat{\Gamma}$ be the graph in Figure \ref{fig:a}. 

\begin{figure}[h]
 \centering
    \includegraphics[width=0.5\textwidth]{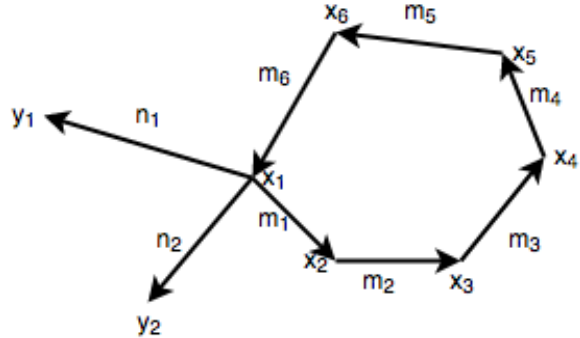}
    \caption{The graph $\widehat{\Gamma}$.  }
    \label{fig:a}
 \end{figure}   

We fix the signature $\nu_{1} = 0$ for $n_{1}$ and the signature $\mu_{1} =0$ for $m_{1}$, then other signatures are implicitly determined by the constraint at each internal vertex because of the formal Fourier convolution, namely $n_1+n_2 -m_6 + m_1 =0$ for $x_1$ and $m_{k+1} - m_{k} = 0$ for each $x_k$ ($2\leq k \leq 5$).
\begin{align*}
& G_{\mathbb{Z},\nu,\widehat{\Gamma}, \partial \widehat{\Gamma}}(\{y_1,y_2\},1)  \\
= & \int_{(\mathbb{R}/\mathbb{Z})^{6}}
\sum_{\substack{n_{k},m_l \in \mathbb{Z}\smallsetminus\{0\}\\\mathrm{sgn}(n_k) = (-1)^{\nu_k}\\ \mathrm{sgn}(m_l) = (-1)^{\mu_l}\\ n_1 + n_2 +m_1 - m_6 =  0\\ m_l = m_{l+1}, 1 \leq l \leq 5}}\frac{e^{2\pi i (\sum^{5}_{l=1}m_l(x_{l+1}-x_{l})+m_6(x_1 - x_6)+n_1(y_1-x_1)+n_2(y_2-x_1))}}{\prod^{6}_{l=1}\vert m_l\vert \cdot\vert n_1 \vert \cdot |n_2| }dx_1\cdots dx_{6}.
\end{align*}

By formal Fourier convolution, 
\[G_{\mathbb{Z},\nu,\widehat{\Gamma}, \partial \widehat{\Gamma}}(\{y_1,y_2\},1) = \sum_{n_1,m_1 \in \mathbb{N}\smallsetminus\{0\}}\frac{e^{2\pi i(n_1(y_1-y_2))}}{m_1^{6}n_1^{2}} = \zeta(6)\sum^{\infty}_{n=0}\frac{e^{2\pi i n(y_1-y_2)}}{n^2}, \]
and 
\[Z_{\mathbb{Z},\nu}(\widehat{\Gamma}, \partial \widehat{\Gamma}) = \sum_{n_1,m_1 \in \mathbb{N}\smallsetminus\{0\}}\frac{1}{m_1^{6}n_1^{2}} = \zeta(6)\zeta(2).\]
The result is equal to the value of 
$$Z_{\mathbb{Z},\nu}(\Gamma_b, \{s_1, s_2\}) = \sum_{\substack{n_1,m_1\in \mathbb{N}\smallsetminus\{0\}\\ n_2 = -n_1 }}\frac{1}{m^6_1 n^{2}_1} = \sum_{n_1,m_1 \in \mathbb{N}\smallsetminus\{0\}}\frac{1}{m_1^{6}n_1^{2}} ,$$ 
given by Figure \ref{fig:b}.

\begin{figure}[h]
 \centering
    \includegraphics[width=0.6\textwidth]{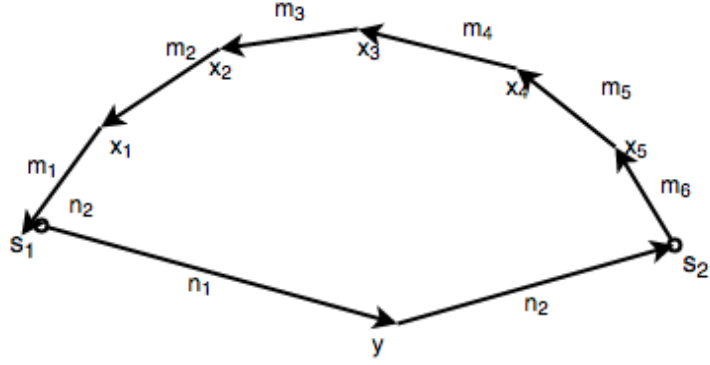}
    \caption{The graph $\Gamma_b$.  }
    \label{fig:b}
 \end{figure}   
 
From the discussion above, we can see that different graphs can deliver the same value.
\end{ex}

\begin{ex}
Let $\Gamma $ be the graph in Figure \ref{fig:00}. 
\begin{figure}[h]
 \centering
    \includegraphics[width=0.6\textwidth]{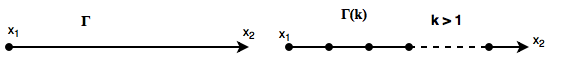}
    \caption{A plane tree of rank $1$.}
    \label{fig:00}
\end{figure}

The graph $\Gamma(\underline{k}) $ is just a chain obtained after adding $k-1$ points to $\Gamma $. The given sign is
$\nu_i = \nu$, $1 \leq i \leq k$.  
 Then
\[G_{I,\nu,\Gamma (k), \partial \Gamma(\underline{k})}(\{x_{v}\}_{v\in \partial \Gamma(\underline{k})}, 1) = \sum_{\substack{n\in \mathbb{Z}\smallsetminus\{0\} \\ \mathrm{sgn}(n) = (-1)^{\nu}}}\frac{e^{2\pi in(x_2-x_1)}}{|n|^{k}},\]
and 
\[Z_{I,\nu}(\Gamma (k), \partial \Gamma(\underline{k})) = \sum_{\substack{n\in \mathbb{Z}\smallsetminus\{0\} \\ \mathrm{sgn}(n) = (-1)^{\nu}}}\frac{1}{|n|^{k}} = \zeta(k).\]
Moreover, we even do not need sign $\nu$, then one obtain
\[Z_{I}(\Gamma (k), \partial \Gamma(\underline{k})) = \sum_{\substack{n\in \mathbb{Z}\smallsetminus\{0\}}}\frac{1}{|n|^{k}} = 2\zeta(k).\]
\end{ex}

\paragraph{Case of trees of internal valency $\geq 3$.}

\begin{ex}\label{ex:1}
Let $\Gamma_{1}$ be the graph in Figure \ref{fig:1}. 
\begin{figure}[h]
 \centering
    \includegraphics[width=0.25\textwidth]{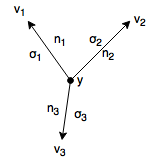}
    \caption{A plane trivalent tree with one internal vertex}
    \label{fig:1}
\end{figure}

The rank of $\Gamma_{1} $ is $ 2 = rank(H_{1}(\Gamma_{1}, \partial \Gamma_{1}))$. To each edge
$e_{i}\ (i = 1,2,3)$ we add $\sigma_{i} -1 \geq 0$ points. The only internal vertex is denoted by $y$, each external vertex $v_{i}$ is decorated by the variable $x_{v_{i}}$. For each $e_i \ (i = 1, 2)$, the given sign $\nu_i$ equals $0$. For the edge $e_3$, the sign $\nu_3 =1$. 
In fact, the constraint $n_1+n_2+n_3 = 0$ and $\nu_1 = \nu_2 = 0$ imply that $\nu_3 = 1$. 

$$G_{\mathbb{Z},\nu,\Gamma_{1}, \partial \Gamma_{1}}(\{x_{v}\}_{v\in \partial \Gamma_{1}}, 1) = \sum_{\substack{n_{1}+n_{2}+n_{3}=0,n_{i}\in \mathbb{Z}\smallsetminus \{0\}\\\mathrm{sgn}(n_{i})=(-1)^{\nu_i}}}\frac{e^{2\pi i( n_{1}x_{v_{1}}+n_{2}x_{v_{2}}+n_{3}x_{v_{3}})}}{|n_{1}|^{\sigma_{1}}|n_{2}|^{\sigma_{2}}|n_{3}|^{\sigma_{3}}},
$$
where $\nu_{1} = \nu_{2} = 0$, $\nu_3 = 1$. Then 
\begin{align*}
 & Z_{\mathbb{Z},\nu}(\Gamma_{1}, \partial \Gamma_{1}) \\
=  & \sum_{\substack{n_{1}+n_{2}+n_{3}=0,n_{i}\in \mathbb{Z}\smallsetminus \{0\}\\ \mathrm{sgn}(n_{i})=(-1)^{\mu_i}}}\frac{1}{|n_{1}|^{\sigma_{1}}|n_{2}|^{\sigma_{2}}|n_{3}|^{\sigma_{3}}} \\
= & \sum_{n_{1},n_{2}\in \mathbb{N}\smallsetminus \{0\}}\frac{1}{|n_{1}|^{\sigma_{1}}|n_{2}|^{\sigma_{2}}|n_{1}+n_{2}|^{\sigma_{3}}}
\end{align*}

\[Z_{\mathbb{Z},\nu}(\Gamma_{1}, \partial \Gamma_{1}) = \sum_{n_{1},n_{2}\in \mathbb{N}\smallsetminus\{0\}}\frac{1}{n_{1}^{\sigma_{1}}n_{2}^{\sigma_{2}}(n_{1}+n_{2})^{\sigma_{3}}} .\]
Recall the following Eisenstein's trick
\begin{formu}\label{for:1}
\[\frac{1}{m^{i}n^{j}} = \sum_{r+s = i+j}\bigg( \frac{C^{i-1}_{r-1}}{(m+n)^{r}n^{s}} + \frac{C^{j-1}_{r-1}}{(m+n)^{r}m^{s}} \bigg ). \]
then $\frac{1}{n_{1}^{\sigma_1}n_{2}^{\sigma_2}} = \sum_{r+s = \sigma_{1}+\sigma_{2}}\bigg( \frac{C^{\sigma_{1}-1}_{r-1}}{(n_{1}+n_{2})^{r}n_{2}^{s}} + \frac{C^{\sigma_{2}-1}_{r-1}}{(n_{1}+n_{2})^{r}n_{1}^{s}} \bigg ),\  r, s \geq 1.$
\end{formu}
Hence
\[ Z_{\mathbb{Z},\nu}(\Gamma_{1}, \partial \Gamma_{1}) = \sum_{n_{1},n_{2}\in \mathbb{N}\smallsetminus\{0\}}\sum_{r+s = \sigma_{1}+\sigma_{2}}\bigg( \frac{C^{\sigma_{1}-1}_{r-1}}{(n_{1}+n_{2})^{r + \sigma_{3}}n_{2}^{s}} + \frac{C^{\sigma_{2}-1}_{r-1}}{(n_{1}+n_{2})^{r+\sigma_{3}}n_{1}^{s}} \bigg ),\]
\[ = \sum_{r+s = \sigma_{1}+\sigma_{2}}\Bigg\{C^{\sigma_{1}-1}_{r-1}\bigg(\sum_{n_{1},n_{2}\in \mathbb{N}\smallsetminus\{0\}} \frac{1}{(n_{1}+n_{2})^{r + \sigma_{3}}n_{2}^{s}}\bigg) + C^{\sigma_{2}-1}_{r-1}\bigg(\sum_{n_{1},n_{2}\in \mathbb{N}\smallsetminus\{0\}}\frac{1}{(n_{1}+n_{2})^{r+\sigma_{3}}n_{1}^{s}} \bigg )\Bigg\}.\]
Since $$\sum_{n_{1},n_{2}\in \mathbb{N}\smallsetminus\{0\}} \frac{1}{(n_{1}+n_{2})^{r + \sigma_{3}}n_{2}^{s}} = \sum_{0<n_{2}< n'_{1}}\frac{1}{(n'_{1})^{r + \sigma_{3}}n_{2}^{s}} = \zeta(s,r+\sigma_{3}),$$
where $n'_{1} = n_{1}+n_{2}$. We can express the generalized multiple zeta value for $\Gamma_{1}$ as follows: 
$$Z_{\mathbb{Z},\nu}(\Gamma_{1}, \partial \Gamma_{1}) =   \sum_{r+s = \sigma_{1}+\sigma_{2}}\left(C^{\sigma_{1}-1}_{r-1}\zeta(s,r+\sigma_{3}) + C^{\sigma_{2}-1}_{r-1}\zeta(s,r+\sigma_{3}) \right). $$
$$=   \sum_{r+s = \sigma_{1}+\sigma_{2}}\bigg ( C^{\sigma_{1}-1}_{r-1} + C^{\sigma_{2}-1}_{r-1}\bigg)\zeta(s,r+\sigma_{3}) .  $$
\end{ex}
Here $\zeta(s, r+\sigma_{3})$ is a classical double zeta value of weight $r + s + \sigma_{3} = \sigma_{1} + \sigma_{2}+\sigma_{3}$, which means that $Z_{\mathbb{Z},\nu}(\Gamma_{1}, \partial \Gamma_{1})$ can be expressed as a $\mathbb{Z}$-linear combination of double zeta values of this weight.

\begin{ex} \label{ex:1'}
Let $\Gamma'_{1}$ be the diagram as in Figure \ref{fig:1'}. 
\begin{figure}[h]
 \centering
    \includegraphics[width=0.35\textwidth]{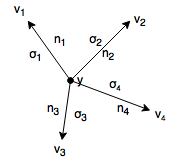}
    \caption{A plane trivalent tree with one internal vertex and 4 edges}
    \label{fig:1'}
\end{figure}    
The rank of $\Gamma'_1$ is $ rank(H_{1}(\Gamma'_{1}, \partial \Gamma'_{1})) = 3$. In fact, the $\Gamma'_1$ is no longer a plane trivalent tree. For each edge
$e_{i} ( 1\leq i \leq 4)$, we add $\sigma_{i} -1 \ (\sigma_{i}\geq 1)$ points. The only internal vertex is denoted by $y$, each external vertex $v_{i}$ is decorated by $x_{v_{i}}$. For each $e_i \ (i = 1, 2,3)$, the given sign $\nu_i$ equals $0$; for $e_4$ the sign $\nu_4 =1$.

$$G_{\mathbb{Z},\nu,\Gamma'_{1}, \partial \Gamma'_{1}}(\{x_{v}\}_{v\in \partial \Gamma'_1}, 1) = \sum_{\substack{n_{1}+n_{2}+n_{3}+ n_4=0,n_{i}\in \mathbb{Z}\smallsetminus \{0\};\\ \mathrm{sgn}(n_{i})=(-1)^{\nu_i}, 1 \leq i \leq 4}}\frac{e^{2\pi i( n_{1}x_{v_{1}}+n_{2}x_{v_{2}}+n_{3}x_{v_{3}}+n_4x_{v_{4}})}}{|n_{1}|^{\sigma_{1}}|n_{2}|^{\sigma_{2}}|n_{3}|^{\sigma_{3}}|n_{4}|^{\sigma_{4}}},
$$
where $\nu_{1} = \nu_{2} = \nu_{3} = 0$, $\nu_4=1$. Then 
$$ Z_{\mathbb{Z},\nu}(\Gamma'_{1}, \partial \Gamma'_{1}) = G_{I,\nu,\Gamma'_{1}, \partial \Gamma'_{1}}(\{0\}_{v\in \partial \Gamma'_1}, 1) $$

$$=  \sum_{\substack{n_{1}+n_{2}+n_{3}+n_4=0,n_{i}\in \mathbb{Z}\\ \mathrm{sgn}(n_j)=(-1)^{v_j}, 1 \leq j \leq 4}}\frac{1}{|n_{1}|^{\sigma_{1}}|n_{2}|^{\sigma_{2}}|n_{3}|^{\sigma_{3}}|n_{4}|^{\sigma_4}} $$
\[Z_{\mathbb{Z},\nu}(\Gamma'_{1}, \partial \Gamma'_{1}) = \sum_{n_{1},n_{2},n_3\in \mathbb{N}\smallsetminus\{0\}}\frac{1}{n_{1}^{\sigma_{1}}n_{2}^{\sigma_{2}}n_3^{\sigma_{3}}(n_{1}+n_{2}+n_{3})^{\sigma_4}} .\]

Firstly, we use Eisenstein's trick for 
\[ \frac{1}{n_2^{\sigma_2}n_3^{\sigma_3}} = \sum_{r_1 + s_1= \sigma_2 + \sigma_3} \left ( \frac{C^{\sigma_{2}-1}_{r_{1}-1}}{(n_2+n_3)^{r_{1}}n_{3}^{s_{1}}} + \frac{C^{\sigma_{3}-1}_{r_{1}-1}}{(n_2+n_3)^{r_1}n_{2}^{s_1}}\right) \]
Then 
$$Z_{\mathbb{Z},\nu}(\Gamma'_{1}, \partial \Gamma'_{1}) = $$
$$ \sum_{r_1 + s_1= \sigma_2 + \sigma_3}\sum_{n_{1},n_{2},n_3\in \mathbb{N}\smallsetminus\{0\}}  \left ( \frac{C^{\sigma_{2}-1}_{r_{1}-1}}{(n_{1}+n_{2}+n_{3})^{\sigma_4}(n_2+n_3)^{r_{1}}n_{3}^{s_{1}}n_{1}^{\sigma_1}} + \frac{C^{\sigma_{3}-1}_{r_{1}-1}}{(n_{1}+n_{2}+n_{3})^{\sigma_4}(n_2+n_3)^{r_1}n_{2}^{s_1}n_{1}^{\sigma_1}}\right) $$
Secondly, we use twice Eisenstein's trick for the terms involving $n_1$ and $(n_2 + n_3)$. Then we obtain
\[ Z_{\mathbb{Z},\nu}(\Gamma'_{1}, \partial \Gamma'_{1}) = \sum_{\substack{r_1 + s_1= \sigma_2 + \sigma_3\\r_2 + s_2 = r_1 + \sigma_1}} \sum_{n_{1},n_{2},n_3\in \mathbb{N}\smallsetminus\{0\}}\frac{C^{r_1-1}_{r_2-1}C^{\sigma_{2}-1}_{r_{1}-1}}{(n_{1}+n_{2}+n_{3})^{\sigma_4 + r_2}n_{3}^{s_{1}}n_{1}^{s_2}} \]

\[+ \sum_{\substack{r_1 + s_1= \sigma_2 + \sigma_3\\r_2 + s_2 = r_1 + \sigma_1}} \sum_{n_{1},n_{2},n_3\in \mathbb{N}\smallsetminus\{0\}} \frac{C^{\sigma_1-1}_{r_2-1}C^{\sigma_{2}-1}_{r_{1}-1}}{(n_{1}+n_{2}+n_{3})^{\sigma_4+ r_2}(n_2+n_3)^{s_2}n_{3}^{s_1}}\]

\[+ \sum_{\substack{r_1 + s_1= \sigma_2 + \sigma_3\\r'_2 + s'_2 = r_1 + \sigma_1}} \sum_{n_{1},n_{2},n_3\in \mathbb{N}\smallsetminus\{0\}}\frac{C^{r_1-1}_{r'_2-1}C^{\sigma_{3}-1}_{r_{1}-1}}{(n_{1}+n_{2}+n_{3})^{\sigma_4 + r'_{2}}n_{2}^{s_{1}}n_{1}^{s'_2}} \]

\[+ \sum_{\substack{r_1 + s_1= \sigma_2 + \sigma_3\\r'_2 + s'_2 = r_1 + \sigma_1}} \sum_{n_{1},n_{2},n_3\in \mathbb{N}\smallsetminus\{0\}} \frac{C^{\sigma_1-1}_{r'_2-1}C^{\sigma_{3}-1}_{r_{1}-1}}{(n_{1}+n_{2}+n_{3})^{\sigma_4 + r'_2}(n_2+n_3)^{s'_2}n_{2}^{s_1}}\]

Finally, we use Eisenstein's trick for the terms involving $n_1$ and $n_2$(respectively, $n_1$ and $n_3$). 
\[ Z_{\mathbb{Z}, \nu}(\Gamma'_{1}, \partial \Gamma'_{1}) = \sum_{\substack{r_1 + s_1= \sigma_2 + \sigma_3\\r_2 + s_2 = r_1 + \sigma_1\\r_3+s_3 = s_1 + s_2}} \sum_{n_{1},n_{2},n_3\in \mathbb{N}\smallsetminus\{0\}}\frac{C^{s_1-1}_{r_3-1}C^{r_1-1}_{r_2-1}C^{\sigma_{2}-1}_{r_{1}-1}}{(n_{1}+n_{2}+n_{3})^{\sigma_4+r_{2}}(n_1 + n_3)^{r_3}n_{1}^{s_3}} \]

\[+ \sum_{\substack{r_1 + s_1= \sigma_2 + \sigma_3\\r_2 + s_2 = r_1 + \sigma_1\\r_3+s_3 = s_1 + s_2}} \sum_{n_{1},n_{2},n_3\in \mathbb{N}\smallsetminus\{0\}}\frac{C^{s_2-1}_{r_3-1}C^{r_1-1}_{r_2-1}C^{\sigma_{2}-1}_{r_{1}-1}}{(n_{1}+n_{2}+n_{3})^{\sigma_4+r_{2}}(n_1 + n_3)^{r_3}n_{3}^{s_{3}}}\]

\[+ \sum_{\substack{r_1 + s_1= \sigma_2 + \sigma_3\\r_2 + s_2 = r_1 + \sigma_1}} \sum_{n_{1},n_{2},n_3\in \mathbb{N}\smallsetminus\{0\}} \frac{C^{\sigma_1-1}_{r_2-1}C^{\sigma_{2}-1}_{r_{1}-1}}{(n_{1}+n_{2}+n_{3})^{\sigma_4+r_2}(n_2+n_3)^{s_2}n_{3}^{s_1}}\]

\[+ \sum_{\substack{r_1 + s_1= \sigma_2 + \sigma_3\\r'_2 + s'_2 = r_1 + \sigma_1\\r'_3+s'_3 = s_1 +s'_2}} \sum_{n_{1},n_{2},n_3\in \mathbb{N}\smallsetminus\{0\}}\frac{C^{s_1-1}_{r'_3-1}C^{r_1-1}_{r'_2-1}C^{\sigma_{3}-1}_{r_{1}-1}}{(n_{1}+n_{2}+n_{3})^{\sigma_4 + r'_{2}}(n_1+n_{2})^{r'_{3}}n_{1}^{s'_3}} \]

\[+ \sum_{\substack{r_1 + s_1= \sigma_2 + \sigma_3\\r'_2 + s'_2 = r_1 + \sigma_1\\r'_3+s'_3 = s_1 +s'_2}} \sum_{n_{1},n_{2},n_3\in \mathbb{N}\smallsetminus\{0\}}\frac{C^{s'_2-1}_{r'_3-1}C^{r_1-1}_{r'_2-1}C^{\sigma_{3}-1}_{r_{1}-1}}{(n_{1}+n_{2}+n_{3})^{\sigma_4 + r'_{2}}(n_{1}+n_2)^{r'_3}n_{2}^{s'_{3}}} \]

\[+ \sum_{\substack{r_1 + s_1= \sigma_2 + \sigma_3\\r'_2 + s'_2 = r_1 + \sigma_1}} \sum_{n_{1},n_{2},n_3\in \mathbb{N}\smallsetminus\{0\}} \frac{C^{\sigma_1-1}_{r'_2-1}C^{\sigma_{3}-1}_{r_{1}-1}}{(n_{1}+n_{2}+n_{3})^{\sigma_4 + r'_2}(n_2+n_3)^{s'_2}n_{2}^{s_1}}\]
Then
\[Z_{\mathbb{Z},\nu}(\Gamma'_{1}, \partial \Gamma'_{1}) = \sum_{\substack{r_1 + s_1= \sigma_2 + \sigma_3\\r_2 + s_2 = r_1 + \sigma_1\\r_3+s_3 = s_1 + s_2}}\left(C^{s_1-1}_{r_3-1}C^{r_1-1}_{r_2-1}C^{\sigma_{2}-1}_{r_{1}-1} +C^{s_2-1}_{r_3-1}C^{r_1-1}_{r_2-1}C^{\sigma_{2}-1}_{r_{1}-1}\right)\zeta (s_3, r_3, \sigma_4 + r_2)  \]

\[+\sum_{\substack{r_1 + s_1= \sigma_2 + \sigma_3\\r'_2 + s'_2 = r_1 + \sigma_1\\r'_3+s'_3 = s_1 +s'_2}} \left(C^{s_1-1}_{r'_3-1}C^{r_1-1}_{r'_2-1}C^{\sigma_{3}-1}_{r_{1}-1} + C^{s'_2-1}_{r'_3-1}C^{r_1-1}_{r'_2-1}C^{\sigma_{3}-1}_{r_{1}-1}\right)\zeta(s'_3, r'_3, \sigma_4 + r'_2)\]

\[+ \sum_{\substack{r_1 + s_1= \sigma_2 + \sigma_3\\r_2 + s_2 = r_1 + \sigma_1}}C^{\sigma_1-1}_{r_2-1}C^{\sigma_{2}-1}_{r_{1}-1}\zeta(s_1,s_2, \sigma_4+r_2) + \sum_{\substack{r_1 + s_1= \sigma_2 + \sigma_3\\r'_2 + s'_2 = r_1 + \sigma_1}}C^{\sigma_1-1}_{r'_2-1}C^{\sigma_{3}-1}_{r_{1}-1}\zeta(s_1, s'_2, \sigma_4+r'_2) ,\]
where $\sigma_4 + r_2 + r_3 + s_3 = \sigma_4+r_2 + s_2 + s_1 =\sigma_1 + \sigma_2 + \sigma_3 +\sigma_4 $.\\
We can see that $Z_{\mathbb{Z},\nu}(\Gamma'_{1}, \partial \Gamma'_{1})$ is a $\mathbb{Z}$-linear combination of triple-zeta values of weight $\sigma_1 + \sigma_2 + \sigma_3 +\sigma_4$.
\end{ex}

\begin{ex} \label{ex:2}
Let $\Gamma_{2}$ be the diagram as in Figure \ref{fig:2}. 
\begin{figure}[h]
 \centering
    \includegraphics[width=0.55\textwidth]{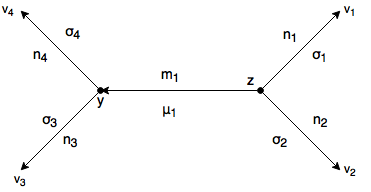}
    \caption{A plane trivalent tree with two internal vertices}
    \label{fig:2}
\end{figure}

 The rank is equal to $ rank(H_{1}(\Gamma_{2}, \partial \Gamma_{2})) = 3$. For each external edge $e_i\ (1 \leq i \leq 3)$, the given sign $\nu_i$ equals $0$, therefore $\nu_4 = 1$.

$$G_{I,\nu,\Gamma_{2}, \partial \Gamma_{2}}(\{x_{v}\}_{v\in S}, 1) = $$
$$\int_{(\mathbb{R}/\mathbb{Z})^{2}}\sum_{\substack{n_{i},m_i\in \mathbb{Z}\smallsetminus\{0\}\\\mathrm{sgn}(n_{i} )= (-1)^{\nu_i}}}\frac{e^{2\pi i((x_{v_{3}}-y)n_{3}+(x_{v_{4}}-y)n_{4})}}{|n_{3}|^{\sigma_{3}}|n_{4}|^{\sigma_{4}}}\frac{e^{2\pi i((x_{v_{1}}-z)n_{1}+(x_{v_{2}}-z)n_{2}+(y-z)m_{1})}}{|n_{1}|^{\sigma_{1}}|n_{2}|^{\sigma_{2}}|m_{1}|^{\mu_{1}}}dxdy $$
By the formal Fourier convolution, we get
$$G_{I,\nu,\Gamma_{2}, \partial \Gamma_{2}}(\{x_{v}\}_{v\in S}, 1) = \sum_{\substack{n_{1}+n_{2}+m_{1}=0, n_{3}+n_{4}-m_{1}=0; n_{i},m_{i}\in \mathbb{Z}\smallsetminus\{0\}\\\mathrm{sgn}(n_{i} )= (-1)^{\nu_i}}}\frac{e^{2\pi i( n_{1}x_{v_{1}}+n_{2}x_{v_{2}}+n_{3}x_{v_{3}} +n_{4}x_{v_{4}})}}{|n_{1}|^{\sigma_{1}}|n_{2}|^{\sigma_{2}}|n_{3}|^{\sigma_{3}}|n_{4}|^{\sigma_{4}}|m_{1}|^{\mu_{1}}}.$$
 We can see that
for the first internal vertex $z$, we have a constraint condition $n_{1}+n_{2}+m_{1} = 0$ and for the internal vertex $y$, we have $n_{3}+n_{4}-m_{1} = 0$, $n_1 + n_2 +n_3 +n_4 = 0$. 
\[Z_{I,\nu}(\Gamma_2,\partial \Gamma_2) = G_{I,\nu,\Gamma_{2}, \partial \Gamma_{2}}(\{0\}_{v\in S}, 1) =\sum_{n_{1},n_{2},n_{3}\in \mathbb{N}\smallsetminus\{0\}}\frac{1}{n_{3}^{\sigma_{3}}(n_{1}+n_{2}+n_{3})^{\sigma_{4}}}\frac{1}{n_{1}^{\sigma_{1}}n_{2}^{\sigma_{2}}(n_{1}+n_{2})^{\mu_{1}}} \]
Applying Eisenstein's trick \ref{for:1} again, we get
\[Z_{I,\nu}(\Gamma_2,\partial \Gamma_2) = \sum_{n_{1},n_{2},n_{3}\in \mathbb{N}\smallsetminus\{0\}}\frac{1}{n_{3}^{\sigma_{3}}(n_{1}+n_{2}+n_{3})^{\sigma_{4}}} \sum_{s_{1} +t_{1} = \sigma_{1}+\sigma_{2}}\bigg( \frac{C^{\sigma_{1}-1}_{s_{1}-1}}{(n_{1}+n_{2})^{s_{1} + \mu _{1}}n_{2}^{t_{1}}} + \frac{C^{\sigma_{2}-1}_{s_{1}-1}}{(n_{1}+n_{2})^{s_{1}+\mu _{1}}n_{1}^{t_{1}}} \bigg ) \]

$$=  \sum_{n_{1},n_{2},n_{3}\in \mathbb{N}\smallsetminus\{0\}}\sum_{s_{1} + t_{1} = \sigma_{1}+\sigma_{2}}\bigg( \textcircled{1} +\textcircled{2} \bigg ),$$
where 
\[\textcircled{1} = \frac{C^{\sigma_{1}-1}_{s_{1}-1}}{n_{3}^{\sigma_{3}}(n_{1}+n_{2}+n_{3})^{\sigma_{4}}(n_{1}+n_{2})^{s_{1} + \mu _{1}}n_{2}^{t_{1}}},\]
\[\textcircled{2} = \frac{C^{\sigma_{2}-1}_{s_{1}-1}}{n_{3}^{\sigma_{3}}(n_{1}+n_{2}+n_{3})^{\sigma_{4}}(n_{1}+n_{2})^{s_{1}+\mu _{1}}n_{1}^{t_{1}}}.\]

Since $$ \frac{1}{n_{3}^{\sigma_3}(n_{1} + n_{2})^{s_{1}+\mu_{1}}} = \sum_{s_{2}+t_{2} = \sigma_{3}+s_{1}+\mu_{1}}\bigg( \frac{C^{\sigma_{3}-1}_{s_{2}-1}}{(n_{1}+n_{2}+n_{3})^{s_{2}}(n_{1}+n_{2})^{t_{2}}} + \frac{C^{s_{1}+\mu_{1}-1}_{s_{2}-1}}{(n_{1}+n_{2}+n_{3}
)^{s_{2}}n_{3}^{t_{2}}} \bigg ),$$
then
 \[\textcircled{1} = \sum_{s_{2}+t_{2} = \sigma_{3}+s_{1}+\mu_{1}}\bigg( \frac{C^{\sigma_{1}-1}_{s_{1}-1}C^{\sigma_{3}-1}_{s_{2}-1}}{(n_{1}+n_{2}+n_{3})^{s_{2}+\sigma_{4}}(n_{1}+n_{2})^{t_{2}}n_{2}^{t_{1}}} + \frac{C^{\sigma_{1}-1}_{s_{1}-1}C^{s_{1}+\mu_{1}-1}_{s_{2}-1}}{(n_{1}+n_{2}+n_{3})^{s_{2}+\sigma_{4}}n_{3}^{t_{2}}n_{2}^{t_{1}}} \bigg ),\]
 
 \[ \textcircled{2} = \sum_{s_{2}+t_{2} = \sigma_{3}+s_{1}+\mu_{1}}\bigg( \frac{C^{\sigma_{2}-1}_{s_{1}-1}C^{\sigma_{3}-1}_{s_{2}-1}}{(n_{1}+n_{2}+n_{3})^{s_{2}+\sigma_{4}}(n_{1}+n_{2})^{t_{2}}n_{1}^{t_{1}}} + \frac{C^{\sigma_{2}-1}_{s_{1}-1}C^{s_{1}+\mu_{1}-1}_{s_{2}-1}}{(n_{1}+n_{2}+n_{3})^{s_{2}+\sigma_{4}}n_{3}^{t_{2}}n_{1}^{t_{1}}} \bigg ).\]
 
Since  $$\frac{1}{n_{3}^{t_2}n_{2}^{t_1}} = \sum_{s_3+t_3 = t_{1}+t_{2}}\bigg( \frac{C^{t_{2}-1}_{s_3-1}}{(n_{3}+n_{2})^{s_3}n_{2}^{t_3}} + \frac{C^{t_{1}-1}_{s_3-1}}{(n_{3}+n_{2})^{s_3}n_{3}^{t_3}} \bigg ),$$

$$\frac{1}{n_{3}^{t_2}n_{1}^{t_1}} = \sum_{s'_3+t'_3 = t_{1}+t_{2}}\bigg( \frac{C^{t_{2}-1}_{s'_3-1}}{(n_{3}+n_{1})^{s'_3}n_{1}^{t'_3}} + \frac{C^{t_{1}-1}_{s'_3-1}}{(n_{3}+n_{1})^{s'_3}n_{3}^{t'_3}} \bigg ),$$
we can rewrite $\textcircled{1}$ and $\textcircled{2}$ as follows. 
$$
 \textcircled{1} =  \sum_{s_{2}+t_{2} = \sigma_{3}+s_{1}+\mu_{1}}\frac{C^{\sigma_{1}-1}_{s_{1}-1}C^{\sigma_{3}-1}_{s_{2}-1}}{(n_{1}+n_{2}+n_{3})^{s_{2}+\sigma_{4}}(n_{1}+n_{2})^{t_{2}}n_{2}^{t_{1}}}  + \sum_{\substack{s_{2}+t_{2} = \sigma_{3}+s_{1}+\mu_{1} \\ s_3+t_3 = t_{1}+t_{2}}}\frac{C^{\sigma_{1}-1}_{s_{1}-1}C^{s_{1}+\mu_{1}-1}_{s_{2}-1}C^{t_{2}-1}_{s_3-1}}{(n_{1}+n_{2}+n_{3})^{s_{2}+\sigma_{4}}(n_{3}+n_{2})^{s_3}n_{2}^{t_3}} $$
 $$
 + \sum_{\substack{s_{2}+t_{2} = \sigma_{3}+s_{1}+\mu_{1} \\ s_3+t_3 = t_{1}+t_{2}}}\frac{C^{\sigma_{1}-1}_{s_{1}-1}C^{s_{1}+\mu_{1}-1}_{s_{2}-1}C^{t_{1}-1}_{s_3-1}}{(n_{1}+n_{2}+n_{3})^{s_{2}+\sigma_{4}}(n_{3}+n_{2})^{s_3}n_{3}^{t_3}}
 $$
 
 $$
 \textcircled{2} =  \sum_{s_{2}+t_{2} = \sigma_{3}+s_{1}+\mu_{1}}\frac{C^{\sigma_{2}-1}_{s_{1}-1}C^{\sigma_{3}-1}_{s_{2}-1}}{(n_{1}+n_{2}+n_{3})^{s_{2}+\sigma_{4}}(n_{1}+n_{2})^{t_{2}}n_{1}^{t_{1}}}  + \sum_{\substack{s_{2}+t_{2} = \sigma_{3}+s_{1}+\mu_{1} \\ s'_3+t'_3 = t_{1}+t_{2}}}\frac{C^{\sigma_{2}-1}_{s_{1}-1}C^{s_{1}+\mu_{1}-1}_{s_{2}-1}C^{t_{2}-1}_{s'_3-1}}{(n_{1}+n_{2}+n_{3})^{s_{2}+\sigma_{4}}(n_{3}+n_{1})^{s'_3}n_{1}^{t'_3}} $$
 $$
 + \sum_{\substack{s_{2}+t_{2} = \sigma_{3}+s_{1}+\mu_{1} \\ s'_3+t'_3 = t_{1}+t_{2}}}\frac{C^{\sigma_{2}-1}_{s_{1}-1}C^{s_{1}+\mu_{1}-1}_{s_{2}-1}C^{t_{1}-1}_{s'_3-1}}{(n_{1}+n_{2}+n_{3})^{s_{2}+\sigma_{4}}(n_{3}+n_{1})^{s'_3}n_{3}^{t'_3}}
 $$
Finally we obtain
\begin{formu}\label{for:2}
\[Z_{I,\nu}(\Gamma_2,\partial \Gamma_2) = \sum_{\substack{s_1 + t_1 = \sigma_{1}+\sigma_{2} \\ s_{2}+t_{2} = \sigma_{3}+s_{1}+\mu_{1}}}\sum_{n_1,n_2,n_3\in \mathbb{N}\smallsetminus\{0\}}\frac{C^{\sigma_{1}-1}_{s_{1}-1}C^{\sigma_{3}-1}_{s_{2}-1}}{(n_{1}+n_{2}+n_{3})^{s_{2}+\sigma_{4}}(n_{1}+n_{2})^{t_{2}}n_{2}^{t_{1}}} \]

\[+ \sum_{\substack{s_1+t_1= \sigma_{1} +\sigma_2\\ s_{2}+t_{2} = \sigma_{3}+s_{1}+\mu_{1} \\ s_3+t_3 = t_{1}+t_{2}}}\sum_{n_1,n_2,n_3\in \mathbb{N}\smallsetminus\{0\}}\left(\frac{C^{\sigma_{1}-1}_{s_{1}-1}C^{s_{1}+\mu_{1}-1}_{s_{2}-1}C^{t_{2}-1}_{s_3-1}}{(n_{1}+n_{2}+n_{3})^{s_{2}+\sigma_{4}}(n_{3}+n_{2})^{s_3}n_{2}^{t_3}} + \frac{C^{\sigma_{1}-1}_{s_{1}-1}C^{s_{1}+\mu_{1}-1}_{s_{2}-1}C^{t_{1}-1}_{s_3-1}}{(n_{1}+n_{2}+n_{3})^{s_{2}+\sigma_{4}}(n_{3}+n_{2})^{s_3}n_{3}^{t_3}}\right)
\]

\[+ \sum_{\substack{s_1 + t_1= \sigma_1 + \sigma_2 \\s_{2}+t_{2} = \sigma_{3}+s_{1}+\mu_{1}}}\sum_{n_1,n_2,n_3\in \mathbb{N}\smallsetminus\{0\}}\frac{C^{\sigma_{2}-1}_{s_{1}-1}C^{\sigma_{3}-1}_{s_{2}-1}}{(n_{1}+n_{2}+n_{3})^{s_{2}+\sigma_{4}}(n_{1}+n_{2})^{t_{2}}n_{1}^{t_{1}}} \]

\[+ \sum_{\substack{s_1+t_1 = \sigma_1 + \sigma_2 \\s_{2}+t_{2} = \sigma_{3}+s_{1}+\mu_{1} \\ s'_3+t'_3 = t_{1}+t_{2}}}\sum_{n_1,n_2,n_3\in \mathbb{N}\smallsetminus\{0\}}\left(\frac{C^{\sigma_{2}-1}_{s_{1}-1}C^{s_{1}+\mu_{1}-1}_{s_{2}-1}C^{t_{2}-1}_{s'_3-1}}{(n_{1}+n_{2}+n_{3})^{s_{2}+\sigma_{4}}(n_{3}+n_{1})^{s'_3}n_{1}^{t'_3}} + \frac{C^{\sigma_{2}-1}_{s_{1}-1}C^{s_{1}+\mu_{1}-1}_{s_{2}-1}C^{t_{1}-1}_{s'_3-1}}{(n_{1}+n_{2}+n_{3})^{s_{2}+\sigma_{4}}(n_{3}+n_{1})^{s'_3}n_{3}^{t'_3}}\right).\]
\end{formu}
However,
 $$ \sum_{n_1,n_2,n_3\in \mathbb{N}\smallsetminus\{0\}}\frac{1}{(n_{1}+n_{2}+n_{3})^{s_{2}+\sigma_{4}}(n_{1}+n_{2})^{t_{2}}n_{2}^{t_{1}}} 
= \sum_{0 < k_1 < k_2 <k_3} \frac{1}{k_3^{s_{2}+\sigma_{4}}k_{2}^{t_2}k_1^{t_1}} = \zeta(t_1,t_2, s_2 + \sigma_{4}),$$
where $k_{1} = n_1, k_2 = n_1 + n_2, k_3 = n_1 + n_2 +n_3$. 
Therefore
$$Z_{I,\nu}(\Gamma_2,\partial \Gamma_2) =  \sum_{\substack{s_1 + t_1 = \sigma_{1}+\sigma_{2} \\ s_{2}+t_{2} = \sigma_{3}+s_{1}+\mu_{1}}}\left(C^{\sigma_{1}-1}_{s_{1}-1}C^{\sigma_{3}-1}_{s_{2}-1} + C^{\sigma_{2}-1}_{s_{1}-1}C^{\sigma_{3}-1}_{s_{2}-1} \right)\zeta(t_1,t_2,s_2 + \sigma_4)  $$

$$ + \sum_{\substack{s_1+t_1= \sigma_{1} +\sigma_2\\ s_{2}+t_{2} = \sigma_{3}+s_{1}+\mu_{1} \\ s_3+t_3 = t_{1}+t_{2}}}\left(C^{\sigma_{1}-1}_{s_{1}-1}C^{s_{1}+\mu_{1}-1}_{s_{2}-1}C^{t_{2}-1}_{s_3-1} + C^{\sigma_{1}-1}_{s_{1}-1}C^{s_{1}+\mu_{1}-1}_{s_{2}-1}C^{t_{1}-1}_{s_3-1}\right)\zeta(t_3, s_3, s_{2}+\sigma_{4})$$

$$ + \sum_{\substack{s_1+t_1= \sigma_{1} +\sigma_2\\ s_{2}+t_{2} = \sigma_{3}+s_{1}+\mu_{1} \\ s'_3+t'_3 = t_{1}+t_{2}}}\left(C^{\sigma_{2}-1}_{s_{1}-1}C^{s_{1}+\mu_{1}-1}_{s_{2}-1}C^{t_{2}-1}_{s'_3-1} + C^{\sigma_{2}-1}_{s_{1}-1}C^{s_{1}+\mu_{1}-1}_{s_{2}-1}C^{t_{1}-1}_{s'_3-1}\right)\zeta(t'_3, s'_3, s_{2}+\sigma_{4}).$$

We have expressed our $Z_{I,\nu}(\Gamma_2,\partial \Gamma_2)$ as a $\mathbb{Z}$-linear combination of triple zeta  values $\zeta(t_1,t_2,s_2 + \sigma_4) $, whose weight is $t_1 + t_2 + s_2 +\sigma_4  = \sigma_1 + \sigma_2 + \sigma_3 + \mu_1$, and $\zeta(t_3, s_3, s_{2}+\sigma_{4}) $ and $\zeta(t'_3, s'_3, s_{2}+\sigma_{4})$, whose weights are also $\sigma_1 + \sigma_2 + \sigma_3 + \mu_1$.
\end{ex}

\subsection{Proof of theorems}

\paragraph{\textbf{Proof of Theorem} \ref{theo:1}}
Inspired by the previous examples, we will prove the theorem \ref{theo:1} by induction on the number of internal vertices of a given tree. For simplicity, we will first consider only plane trivalent trees. Later we will prove this theorem for any tree.  

\begin{proof}
(\textbf{I}). \textbf{The graph} $\Gamma$ \textbf{is a plane trivalent tree}.\\
Let $\Gamma$ be a given plane trivalent tree  with $N$ internal vertices $w_{j}\ (1 \leq j \leq N )$, then it has $N+2$ external vertices $v_{i}\ (1\leq i \leq N+2)$, $N-1$ internal edges and $N+ 2$ external edges. 

  The subdivision map $k$ is given by $k_{e_{i}} = \sigma_{e_{i}}\ (\sigma_{e_{i}} \geq 1)$ if $e_{i}\ (1\leq i \leq N+2) $ is an external edge with endpoint $v_i$, and $k_{f_j} = \mu_{f_j}\ (\mu_{f_j} \geq 1)$ if $f_j\ (1\leq j \leq N-1)$ is an internal edge. Moreover, for each edge, a sign $\nu_{e} \in \{0,1\}$ is given.

  If $\Gamma$ is a plane trivalent tree with $N$ internal vertices, then the rank $d= rank(H_1(\Gamma,\partial \Gamma))$ is equal to $N + 1$.

  In fact, we can see that the orientation of each edge has no importance by changing the sign for each edge. Moreover, we can also assume that for each external edge $e_i$ with the sign $\nu_{e_i} = 0, \ 1 \leq i \leq N+1$ (we shall see that this forces $\nu_{e_{N+2}} = 1$). It is easy to see that we will lose no generality.  
  
  By the definition of the generalized multiple zeta value, 
  \[ Z_{I,\nu}(\Gamma, \partial  \Gamma) =  \sum_{n_1, \ldots, n_{N+1}\in \mathbb{N}\smallsetminus\{0\}} \prod_{1\leq i \leq N+2}\frac{1}{|n_i|^{\sigma_i}}\prod_{1\leq j \leq N-1}\frac{1}{|m_j|^{\mu_{j}}}.\]
  For convenience, we will define a new quantity

\[ \mathscr{O}_{I,\nu}(\Gamma , \partial \Gamma , (n_i)_i, (m_j)_j) = \prod_{1\leq i \leq N+2}\frac{1}{|n_i|^{\sigma_i}}\prod_{1\leq j \leq N-1}\frac{1}{|m_j|^{\mu_{j}}} .\]
In fact for each internal vertex we have \[ \sum_{e\in E(\Gamma),v_{1}(e)=v}n_{e} - \sum_{e\in E(\Gamma),v_{0}(e)=v }n_{e} = 0,\]
  hence each  $m_j$ can be written as a linear combination of the $n_i$, thus
\[\mathscr{O}_{I,\nu}(\Gamma , \partial \Gamma , (n_i)_i, (m_j)_j) = \mathscr{O}_{I,\nu}(\Gamma , \partial \Gamma , (n_i)_i). \]
If there is no ambiguity, for simplicity we write:
\[ \mathscr{O}_{I,\nu}(\Gamma , \partial \Gamma ) = \mathscr{O}_{I,\nu}(\Gamma , \partial \Gamma , (n_i)_i).\]
Then
$$
Z_{I,\nu}(\Gamma, \partial  \Gamma) = \sum_{\substack{n_{i}\in \mathbb{N}\smallsetminus\{0\},\\1 \leq i \leq d}} \mathscr{O}_{I,\nu}(\Gamma, \partial \Gamma).
$$

\textbf{Step 1} : By the results of the above examples, we know that Therem \ref{theo:1} holds when $N = 1, 2$.

\paragraph{\textbf{Inductive hypothesis}}  If $N = n \ (n\geq 1$), the theorem is true. Moreover, we assume that: 

 The generalized multiple zeta value $Z_{I,\nu}(\Gamma, \partial  \Gamma)$ can be written as follows
\[Z_{I,\nu}(\Gamma, \partial  \Gamma) =  \sum_{n_1, \ldots, n_{N+1}\in \mathbb{N}\smallsetminus\{0\}}\sum_{\gamma \in S_d}
\sum_{\substack{t^{\gamma}_{i}\in \mathbb{N}\smallsetminus\{0\}\\1\leq i \leq d}}\frac{C_{\gamma,t^{\gamma}_i}}{n^{t^{\gamma}_1}_{\gamma \cdot 1}(n_{\gamma\cdot 1}+n_{\gamma\cdot 2})^{t^{\gamma}_2}\cdots (n_{\gamma \cdot 1}+\ldots + n_{\gamma \cdot d})^{t^{\gamma}_{d}}},\] 
which implies that
\[Z_{I,\nu}(\Gamma, \partial \Gamma ) = \sum_{\gamma \in S_d}\sum_{\substack{t^{\gamma}_{i}\\1\leq i \leq d}} C_{\gamma,t^{\gamma}_i} \zeta(t^{\gamma}_{1}, \cdots, t^{\gamma}_{d}), \]
where $S_d$ is the symmetric group, $C_{\gamma, t^{\gamma}_i} \in \mathbb{Z}$ is a constant depending on $\gamma$ and $t^{\gamma}_{i}$. The upper mute symbol $\gamma$ of $t^{\gamma}_{j}$ implies the dependence of $\gamma$, and 
\[ t^{\gamma}_1 + \ldots + t^{\gamma}_{d} = \sum_{e}\sigma_{e} + \sum_{f}\mu_{f}, \quad \forall \gamma \in S_d,\]
and 
\[ t^{\gamma}_{i} \geq 1, \quad \forall \gamma \in S_d, \quad 1 \leq i \leq d.\]

Therefore the first sum 
$$\sum_{\substack{t^{\gamma}_{i} \in \mathbb{N}\smallsetminus \{0\} \\ 1 \leq i \leq d}} $$
is a finite sum.

The fact that Theorem \ref{theo:1} holds means that we have a new expression for $\mathscr{O}_{I,\nu}(\Gamma ,\partial \Gamma)$, namely 
 
\[ \mathscr{O}_{I,\nu}(\Gamma, \partial \Gamma) = \sum_{\gamma \in S_d}\sum_{\substack{t^{\gamma}_{i} \\1 \leq i \leq d}} \frac{C_{\gamma,t^{\gamma}_{i}}}{n^{t^{\gamma}_1}_{\gamma \cdot 1}(n_{\gamma\cdot 1}+n_{\gamma\cdot 2})^{t^{\gamma}_2}\cdots (n_{\gamma \cdot 1}+\ldots + n_{\gamma \cdot d})^{t^{\gamma}_{d}}}, \]
\[ t^{\gamma}_1 + \ldots + t^{\gamma}_{d} = \sum_{e}\sigma_{e} + \sum_{f}\mu_{f}, \quad \forall \gamma \in S_d.\]

 \textbf{Step 2}: Now we will prove the case of $N = n+1$.
 
For a plane trivalent tree, if the number of internal vertices is increased by $1$, then the rank of the tree is increased by $1$, too. 
 
Now we give a clockwise order for all external vertices $v_i$. We will also give an order for all internal vertices, such that  the internal vertex $w_{N}$, decorated by the variable $x_{N}$, is connected with the two external vertices $v_{N+1}$ and $v_{N+2}$ by external edges $\overrightarrow{e_{N+1}} = (w_N\longrightarrow v_{N+1})$ and $\overrightarrow{e_{N+2}} = (w_N \longrightarrow v_{N+2})$. 

\begin{figure}[h]
 \centering
    \includegraphics[width=0.45\textwidth]{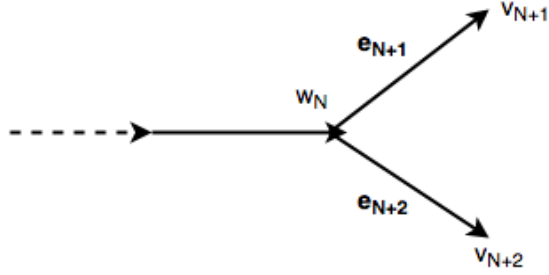}
    \caption{the internal vertex $w_{N}$ is connected with the two external vertices $v_{N+1}$ and $v_{N+2}$. }
    \label{fig:X}
 \end{figure}   
 
In order to deduce the case $N = n+1$ from the case $N -1 = n$, we do an operation: we cut down the internal edge $f_{N-1}$, one of whose ends is the internal vertex $w_{N}$ and associate a new external vertex denoted as $v'_{N+1}$ and denote the new external edge as $e'_{N+1}$ to which we associate $n_{e'_{N+1}} = m_{N-1}$ and the subdivision $\mu_{N-1}$, then we build a new plane trivalent tree $\Gamma '$ with $N-1 = n$ internal vertices and whose rank is $d-1$, where $d$ is the rank of $\Gamma $. 

\begin{figure}[h]
 \centering
    \includegraphics[width=1.0\textwidth]{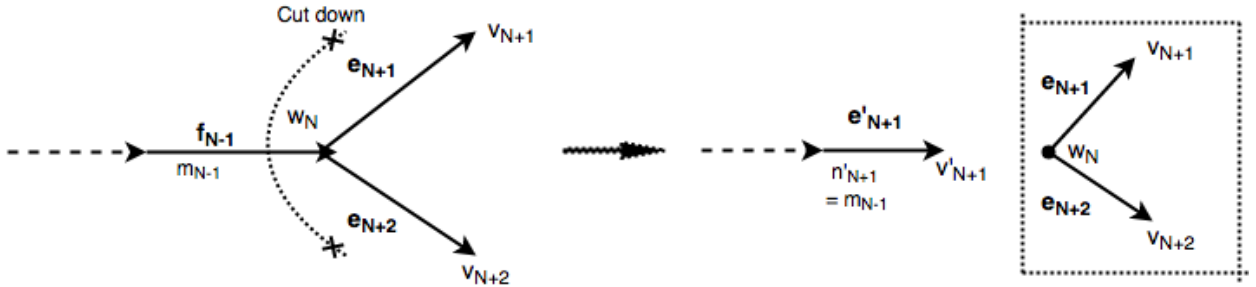}
    \caption{cutting down the internal edge $f_{N-1}$ to get a new tree $\Gamma'$. }
    \label{fig:X1}
 \end{figure}   

In the definition of 
\[ G_{I, \Gamma, S}(\{x_{v}\}_{v\in S}, 1) = \int_{(F_{\mathbb{R}}/I)^{V(\Gamma)\smallsetminus S}}\prod_{e\in E(\Gamma)}g_{I}(x_{v_{0}(e)}-x_{v_{1}(e)},1)\prod_{v\in V(\Gamma)\smallsetminus S}dx_{v}, \]
where
\[g_{I}(x_{v_{0}(e)}-x_{v_{1}(e)},1) = \lim_{\delta \rightarrow 0^{+}} \sum_{h_e\in I^{\ast}\backslash \{0\}}\frac{e^{2\pi i(h_e(x_{v_{0}(e)}-x_{v_{1}(e)}))}}{||uh_e||^{r+\delta}}.\]
We denote $h_e$ by $n_e$ if $e$ is external and denote $h_e$ by $m_e$ if $e$ is internal. And \[Z_{I,\nu}(\Gamma ,\partial \Gamma ) = G_{I,\nu,\Gamma,\partial \Gamma}(\{0\}_{v\in \partial \Gamma},1).\]

Then by formal Fourier convolution, we have
$$
G_{I,\nu,\Gamma,\partial \Gamma}(\{0\}_{v\in \partial \Gamma},1)= $$
$$\sum_{\substack{n_i \in \mathbb{Z}\smallsetminus\{0\}\\ \mathrm{sgn}(n_i) = (-1)^{\nu_{i}}, 1\leq i \leq d}}\frac{1}{|n_{N+1}|^{\sigma_{N+1}}|n_{N+2}|^{\sigma_{N+2}}|m_{N-1}|^{\mu_{N-1}}}\prod_{1\leq i \leq N}\frac{1}{|n_i|^{\sigma_i}}\prod_{1\leq j\leq N-2}\frac{1}{|m_j|^{\mu_{j}}},
$$
where $d = N+1$  and on each internal vertex, we have a constraint $\pi_v = 0$ as in (\ref{const:1}).

In fact, it is not difficult to see that 
\[ \frac{1}{|m_{N-1}|^{\mu_{N-1}}}\prod_{1\leq i \leq N}\frac{1}{|n_i|^{\sigma_i}}\prod_{1\leq j\leq N-2}\frac{1}{|m_j|^{\mu_{j}}} = \mathscr{O}_{I,\nu}(\Gamma ', \partial \Gamma '),\]
and
\[Z_{I,\nu}(\Gamma ', \partial \Gamma ') = \sum_{\substack{n_i \in \mathbb{N}\smallsetminus\{0\}, 1\leq i \leq N}}\mathscr{O}_{I,\nu}(\Gamma ', \partial \Gamma '). \]
Then 
$$
G_{I,\nu,\Gamma,\partial \Gamma}(\{0\}_{v\in \partial \Gamma},1)= \sum_{\substack{n_i \in \mathbb{N}\smallsetminus\{0\}, 1\leq i \leq N+1}}\frac{1}{|n_{N+1}|^{\sigma_{N+1}}|n_{N+2}|^{\sigma_{N+2}}}\cdot \mathscr{O}_{I,\nu}(\Gamma ', \partial \Gamma ').
$$

Since the number of the internal vertices of $\Gamma '$ is $n$, then the theorem for $\Gamma '$ holds by the inductive hypothesis.

Hence we have the following equality: 
\[ \mathscr{O}_{I,\nu}(\Gamma ', \partial \Gamma ') = \sum_{\gamma \in S_{d-1}}\sum_{t^{\gamma}_1 , \ldots ,t^{\gamma}_{d-1} } \frac{\tilde{C}_{\gamma, (t^{\gamma}_{i})_{i }}}{n^{t^{\gamma}_1}_{\gamma \cdot 1}(n_{\gamma\cdot 1}+n_{\gamma\cdot 2})^{t^{\gamma}_2}\cdots (n_{\gamma \cdot 1}+\ldots + n_{\gamma \cdot (d-1)})^{t^{\gamma}_{d-1}}}, \]
where $$t^{\gamma}_1 + \ldots + t^{\gamma}_{d-1} = \sum_{1\leq i \leq N}\sigma_{i} + \sum_{1\leq j\leq N - 1}\mu_{j}, \qquad \forall \gamma \in S_{d-1}, (d = N+1). $$
Now we need to calculate  
\[P_{\gamma} = \frac{1}{|n_{N+1}|^{\sigma_{N+1}}|n_{N+2}|^{\sigma_{N+2}}} \times \frac{\tilde{C}_{\gamma,(t^{\gamma}_{i})_i}}{n^{t^{\gamma}_1}_{\gamma \cdot 1}(n_{\gamma\cdot 1}+n_{\gamma\cdot 2})^{t^{\gamma}_2}\cdots (n_{\gamma \cdot 1}+\ldots + n_{\gamma \cdot (d-1)})^{t^{\gamma}_{d-1}}}, \]
from which we will deduce 
$$Z_{I,\nu}(\Gamma , \partial \Gamma ) =  \sum_{\substack{n_i \in \mathbb{N}\smallsetminus\{0\}, \\ 1\leq i \leq N+1}}\sum_{\gamma \in S_{d-1} }P_{\gamma}.$$
Since the rank $d$ of a plane trivalent tree with $N$ internal vertices equals $N+1$. Then 
$$Z_{I,\nu}(\Gamma , \partial \Gamma ) =  \sum_{\substack{n_i \in \mathbb{N}\smallsetminus\{0\}, \\1\leq i \leq N+1}}\sum_{\gamma \in S_{N} }P_{\gamma}.$$

\textbf{Step 3}: \textbf{Calculation of $P_{\gamma}$}. 

For simplicity, we assume that $\gamma = 1$. In fact, this assumption will be no loss of generality. 
\[P_1 =  \frac{1}{n_{N+1}^{\sigma_{N+1}}|n_{N+2}|^{\sigma_{N+2}}} \times \frac{\tilde{C}_{1,(t^1_{i})_i}}{n^{t^{1}_1}_{1}(n_{1}+n_{2})^{t^{1}_2}\cdots (n_{1}+\ldots + n_{N})^{t^{1}_{N}}} .\]
For any plane tree $\Gamma $, we have 
$$ n_1 + n_2 + \ldots + n_{N+1} + n_{N+2} = 0.  $$
Then 
\[ P_1 =  \frac{1}{n_{N+1}^{\sigma_{N+1}}(n_1+\ldots + n_{N+1})^{\sigma_{N+2}}} \times \frac{\tilde{C}_{1,(t^1_{i})_i}}{n^{t^{1}_1}_{1}(n_{1}+n_{2})^{t^{1}_2}\cdots (n_{1}+n_2 +\ldots + n_{N})^{t^{1}_{N}}} .\]
Now we will apply $N$ times the Eisenstein trick (\ref{for:1}). We will introduce several pieces of notation in order to simplify the demonstration.
\begin{note}\label{notation}
(1) We will write $Eis(a^{s_1}, b^{s_2})$ for the operation
$$\frac{1}{a^{s_1}b^{s_2}} = \sum_{r_1 + r_2 = s_1 + s_2}\frac{C^{s_1-1}_{r_1-1}}{(a+b)^{r_1}b^{r_2}} + \frac{C^{s_2-1}_{r_1-1}}{(a+b)^{r_1}a^{r_2}}. $$
(2) We define 
\[ n(k_1, k_2)= \sum^{k_2}_{j=k_1} n_j, \  k_1 < k_2 ;\]
\[ n_{k} = n(k, k).\]
\end{note}
Then we have 
\[  P_1 =  \frac{\tilde{C}_{1,(t^1_{i})_i}}{n_{N+1}^{\sigma_{N+1}}n(1, N+1)^{\sigma_{N+2}}\prod^{N}_{j=1}n(1,j)^{t^{1}_j}}.  \]

After applying $Eis\left (n^{\sigma_{N+1}}_{N+1}, n(1,N)^{t^1_N}\right )$, we obtain

\[P_1 = a^1_1 + b^1_1,\]
where 
\begin{align*}
a_1 & = \sum_{r_1+s_1 = \sigma_{N+1}+t^1_N} \frac{C^{\sigma_{N+1}-1}_{r_1-1}\tilde{C}_{1,(t^{1}_i)_i}}{n(1,N)^{s_1}n(1,N+1)^{r_1+ \sigma_{N+2}}\prod^{N-1}_{j=1}n(1,j)^{t^1_j}}; \\
b^1_1 & = \sum_{r_1+s_1 = \sigma_{N+1}+t^1_N} \frac{C^{t^1_{N}-1}_{r_1-1}\tilde{C}_{1,(t^1_i)_i}}{n_{N+1}^{s_1}n(1,N+1)^{r_1+ \sigma_{N+2}}\prod^{N-1}_{j=1}n(1,j)^{t^1_j}}.
\end{align*}
Here the lower index $1$ for $a^1_1$ refers to $P_1$ and the upper index $1$ indicates the 1st time use of Eisenstein's trick.

Denote $C_{1, (t^1_i)_i} = C^{\sigma_{N+1}-1}_{r_1-1}\tilde{C}_{1,(t^1_i)_i} $. 
Now we see that 
\begin{align*}
\sum_{\substack{n_i\in \mathbb{N}\smallsetminus\{0\}\\ 1 \leq i \leq N+1}} a^1_1 & = \sum_{r_1+s_1 = \sigma_{N+1}+t^1_N}\sum_{\substack{n_i\in \mathbb{N}\smallsetminus\{0\}\\1 \leq i \leq N+1}}\frac{C_{1,(t^1_i)_i}}{n(1,N)^{s_1}n(1,N+1)^{r_1+ \sigma_{N+2}}\prod^{N-1}_{j=1}n(1,j)^{t^1_j}} \\
&  = \sum_{r_1+s_1 = \sigma_{N+1}+t^1_N}C_{1,(t^1_i)_i}\zeta(t^1_1, \cdots, t^1_{N-1}, s_1, r_1+\sigma_{N+2}), 
\end{align*}
where $$t^1_1 + \cdots + t^1_{N-1} + s_1+ r_1+\sigma_{N+2} = \sum_{1\leq j \leq N}t^1_j + \sigma_{N+1} + \sigma_{N+2}.$$
By the inductive hypothesis
$$t^{\gamma}_1 + \ldots + t^{\gamma}_{d-1} = \sum_{1\leq i \leq N}\sigma_{i} + \sum_{1\leq j\leq N - 1}\mu_{j}, \quad \quad \forall \gamma \in S_{d-1}, \quad (d = N+1). $$
Then 
\[t^1_1 + \cdots + t^1_{N-1} + s_1+ r_1+\sigma_{N+2} = \sum_{1\leq i \leq N+2}\sigma_{i} + \sum_{1\leq j\leq N - 1}\mu_{j}.\]

$\zeta(t^1_1, \cdots, t^1_N, s_1, r_1+\sigma_{N+2})$ is a $(N+1)$-tuple zeta value of weight $\sum_{1\leq i \leq N+2}\sigma_i + \sum_{1\leq j \leq N-1}\mu_{j}$.
\\

Let us continue this procedure.

In fact, for any $2\leq k \leq N -1 $, after the $k$-th time use of Eisenstein's trick for $b^{k-1}_1$, 
\[Eis\left( n_{N+1}^{s_{(k-1)}}, n(1, N-(k-1))^{t^{1}_{N-(k-1)}}\right) \]
we obtain two sums $a^{k}_1$ and $b^k_1$, 

\[ a^{k}_1 = \sum_{\substack{r_1+s_1=\sigma_{N+1}+t^{1}_{N}\\ r_2 + s_2 = s_1 + t^{1}_{N-1} \\ \cdot \\ \cdot \\ \cdot \\ r_k+s_k = s_{k-1}+t^{1}_{N-(k-1)}}}C^{s_{k-1}-1}_{r_k-1}C^{t^{1}_{N-(k-2)}}_{r_{k-1}-1}\cdots C^{t^{1}_{N-1}}_{r_1-1}\tilde{C}_{1,(t^1_i)_i} \times M_{(k)}, \]
where
$$ M_{(k)} = 
\frac{1}{\left[\prod^{N-k}_{j=1}n(1, j)^{t^1_j}\right]n(1,N-k+1)^{s_k}\left[\prod^{N-1}_{j=N-k+1}(n(1,j) +n_{N+1})^{r_{N-j+1}}\right]n(1, N+1)^{r_1+\sigma_{N+2}}} 
,$$
and
\[b^k_1 = \sum_{\substack{r_1+s_1 \\= \sigma_{N+1}+t^1_N;\\r_2+s_2 \\ =s_1+t^1_{N-1};\\ \cdots \\ r_k+s_k \\ = s_{k-1} + t^1_{N-k+1}}}
\frac{C^{t^1_{N-k+1}}_{r_k-1}\cdots C^{t^1_{N}-1}_{r_1-1}\tilde{C}_{1,(t^1_i)_i}}{n^{s_{k}}_{N+1}\prod^{N-k}_{j=1}n(1, j)^{t^1_j}
\prod^{N-1}_{j=N-k+1}(n(1, j) + n_{N+1})^{r_{N-j+1}}n(1, N+1)^{r_1+\sigma_{N+2}}}.\]

 
The final step is the $N$-th use of Eisenstein's trick $Eis\left( n_{N+1}^{s_{N}}, n_1^{t^{1}_{1}}\right) $ for 
$b^{N-1}_{1}$,

then we obtain 
$$b^{N-1}_1 = a^{N}_{1} + b^{N}_1,$$

\[a^{N}_1 = \sum_{\substack{r_1+s_1=\sigma_{N+1}+t^{1}_{N}\\ r_2 + s_2 = s_1 + t^{1}_{N-1} \\ \cdot \\ \cdot \\ \cdot \\ r_N+s_N = s_{N-1}+t^{1}_{1}}}C^{s_{N-1}-1}_{r_{N}-1}C^{t^{1}_{2}}_{r_{N-1}-1}\cdots C^{t^{1}_{N-1}}_{r_1-1}\tilde{C}_{1,(t^1_i)_i}\times M_{(N)}\]
where 
\[ M_{(N)} = \frac{1}{n^{s_N}_{1}\left[\prod^{N-1}_{j=1}(n(1, j) +n_{N+1})^{r_{N-j+1}}\right]n(1, N+1)^{r_1+\sigma_{N+2}}}\]
and

\[b^{N}_{1} = \sum_{\substack{r_1+s_1=\sigma_{N+1}+t^{1}_{N}\\ r_2 + s_2 = s_1 + t^{1}_{N-1} \\ \cdot \\ \cdot \\ \cdot \\ r_N+s_N = s_{N-1}+t^{1}_{1}}}C^{t^1_{1}-1}_{r_N-1}\cdots C^{t^1_{N}-1}_{r_1-1}\tilde{C}_{1,(t^1_i)_i}\times M_{(N+1)},\] 
where
\[M_{(N+1)} = \frac{1}{n^{s_{N}}_{N+1}
\prod^{N-1}_{j= 1}(n(1, j) + n_{N+1})^{r_{N-j+1}}n(1, N+1)^{r_1+\sigma_{N+2}}}.\]

In conclusion, after $N$ applications of Einstein's trick we obtain
\[P_1 = a^1_1 + b^1_1 = a^1_1 + a^2_1 + b^2_1= \cdots = \left(\sum_{1\leq j\leq k} a^{j}_{1} \right) + b^{k}_{1}= \cdots = \left(\sum_{1\leq l \leq N}a^{l}_1\right) + b^{N}_1.\]
We need the following lemma to deduce that $P_1$ is a finite linear combination of MZVs.

\begin{lem}
The sums $\sum_{n_i\in \mathbb{N}\smallsetminus\{0\}} a^{k}_1$ (for each $2 \leq k \leq N$) and $\sum_{n_i\in \mathbb{N}\smallsetminus\{0\}} b^{N}_1$ are finite $\mathbb{Z}$-linear combinations of $(N+1)$-tuple zeta values.
\end{lem}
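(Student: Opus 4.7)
The plan is to prove the lemma by an explicit change of variables that transforms each sum into a standard multiple zeta value. The essential observation is that, after the successive applications of Eisenstein's trick, every term in $a^k_1$ (respectively $b^N_1$) has a denominator that is a product of exactly $N+1$ positive linear forms in $n_1, \ldots, n_{N+1}$, and these linear forms can be simultaneously arranged into a strictly increasing chain once all $n_i \geq 1$.

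For $\sum a^k_1$, I would introduce the partial sums $p_j := n(1,j)$ for $1 \leq j \leq N-k+1$ together with $q_j := n(1,j) + n_{N+1}$ for $N-k+1 \leq j \leq N$ (so that in particular $q_N = n(1, N+1)$). Since every $n_i \geq 1$, one has $0 < p_1 < \cdots < p_{N-k+1} < q_{N-k+1} < q_{N-k+2} < \cdots < q_N$, and the map
\[
(n_1, \ldots, n_{N+1}) \longmapsto (p_1, \ldots, p_{N-k+1}, q_{N-k+1}, \ldots, q_N)
\]
is a bijection onto the set of strictly increasing $(N+1)$-tuples of positive integers (the inverse sets $p_0 := 0$, $n_i = p_i - p_{i-1}$ for $1 \leq i \leq N-k+1$, $n_{N+1} = q_{N-k+1} - p_{N-k+1}$, and $n_i = q_{i-1} - q_{i-2}$ for $N-k+2 \leq i \leq N$). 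Reading the exponents off $M_{(k)}$ in this new order, the inner sum becomes exactly
\[
\zeta\bigl(t^1_1, \ldots, t^1_{N-k}, s_k, r_k, r_{k-1}, \ldots, r_2, r_1 + \sigma_{N+2}\bigr),
\]
which is an $(N+1)$-tuple zeta value. Summing over the finite index set of $(r_i, s_i)$ produced by the Eisenstein decompositions then exhibits $\sum a^k_1$ as a finite $\mathbb{Z}$-linear combination of $(N+1)$-tuple zeta values.

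For $\sum b^N_1$ the analogous substitution $q_0 := n_{N+1}$, $q_j := n(1,j) + n_{N+1}$ for $1 \leq j \leq N$ is again a bijection onto strictly increasing $(N+1)$-tuples of positive integers, under which the sum collapses to $\zeta(s_N, r_N, r_{N-1}, \ldots, r_2, r_1 + \sigma_{N+2})$. So the same argument applies.

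The main point requiring verification is convergence: each entry of the resulting multi-index must be $\geq 1$, and the final entry must be $\geq 2$. Positivity of the $t^1_j$, $r_j$, $s_j$ follows from the inductive hypothesis and the fact that in Eisenstein's trick (Formula \ref{for:1}) the summation is over $r, s \geq 1$; the bound $r_1 + \sigma_{N+2} \geq 2$ is immediate from $r_1 \geq 1$ and $\sigma_{N+2} \geq 1$. Hence every MZV occurring is indeed convergent, and the lemma follows.
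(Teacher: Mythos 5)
Your proof is correct and is essentially the paper's own argument: the paper realizes each $M_{(k)}$ as the image under a permutation $\tau_k$ of a standard nested form $\widetilde{M}_{(k)}$ and invokes the symmetry of the summation domain, which is exactly your change of variables onto strictly increasing $(N+1)$-tuples phrased differently, and both identify the same multiple zeta values $\zeta(t^1_1,\ldots,t^1_{N-k},s_k,r_k,\ldots,r_2,r_1+\sigma_{N+2})$ and $\zeta(s_N,r_N,\ldots,r_2,r_1+\sigma_{N+2})$. (Only a cosmetic indexing slip: in your inverse map the last family should read $n_i = q_i - q_{i-1}$ for $N-k+2 \le i \le N$, not $n_i = q_{i-1} - q_{i-2}$.)
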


\begin{proof}
Let us define $\tau _1 = Id$ and $\tau_{k}(2 \leq  k \leq N)$ is the permutation 
\[
  \tau_{k} = \bigl(\begin{smallmatrix}
     N-k+2 & N-k+3 & N-k+4 & \cdots & N & N+1\\    
    N+1 & N-k+2 & N-k+3 & \cdots &  N-1 & N
  \end{smallmatrix}\bigr)  \in S_{N+1},
\]
\[ \tau_{N+1} = \bigl(\begin{smallmatrix}
     1 & 2 & j & \cdots & N & N+1\\    
    2 & 3 & j+1 & \cdots &  N+1 & 1
  \end{smallmatrix}\bigr)  \in S_{N+1},\]
where $S_{N+1}$ is the symmetric group of $N+1$ elements and define 
\[\widetilde{M}_{(k)} = 
\frac{1}{\left[\prod^{N-k}_{j=1}n(1, j)^{t^1_j}\right]n(1,N-k+1)^{s_k}\left[\prod^{N}_{j=N-k+2}(n(1, j))^{r_{N-j+2}}\right]n(1, N+1)^{r_1+\sigma_{N+2}}},\]
\[ 2 \leq k \leq N-1;\]
\[\widetilde{M}_{(N)} = \widetilde{M}_{(N+1)} = 
\frac{1}{n_1^{s_N}\left[\prod^{N}_{j= 2}(n(1, j))^{r_{N-j+2}}\right]n(1, N+1)^{r_1+\sigma_{N+2}}}.\]

By the definition of multiple zeta value, we know that
\[\sum_{\substack{n_j\in \mathbb{N}\smallsetminus\{0\}\\ 1 \leq j \leq N+1}} \widetilde{M}_{(k)} = \zeta (t^{1}_{1},\cdots, t^{1}_{N-k}, s_k, r_{k},\cdots, r_2, r_1+\sigma_{N+2}), \]
\[2 \leq k \leq N-1,\]
and 
\[\sum_{\substack{n_j\in \mathbb{N}\smallsetminus\{0\}\\ 1 \leq j \leq N+1}} \widetilde{M}_{(N)} = \zeta(s_N, r_N, r_{N-1}, \cdots, r_2, r_1+\sigma_{N+2}).\]
and
\[t^{1}_{1}+\cdots + t^{1}_{N-k}+ s_k + r_{k} +\cdots+ r_2+ r_1+\sigma_{N+2}  = \sum_{1 \leq i \leq N+2}\sigma_{i} + \sum_{1\leq j \leq N-1}\mu_{j}, \]
\[s_{N} + r_N + \cdots + r_2 + r_1 + \sigma_{N+2} = \sum_{1 \leq i \leq N+2}\sigma_{i} + \sum_{1\leq j \leq N-1}\mu_{j}.\] 

$\zeta(t^{1}_{1},\cdots, t^{1}_{N-k}, s_k, r_{k},\cdots, r_2, r_1+\sigma_{N+2} ) (2\leq k \leq N-1)$ and $\zeta(s_N, r_N, r_{N-1}, \cdots, r_2, r_1+\sigma_{N+2})$ are $(N+1)$-tuple zeta values of weight $\sum_{1 \leq i \leq N+2}\sigma_{i} + \sum_{1\leq j \leq N-1}\mu_{j}$, 

Let the permutation $\tau_{k}$ act on $M_{(k)}$ by permutating the index of $\left ( n_{j}\right)_{1\leq j \leq N+1}$. It is not difficult to see that
\[ \tau_{k} \cdot \widetilde{M}_{(k)} = M_{(k)}, (2 \leq k \leq N+1), \]
therefore 
\[ \sum_{\substack{n_j \in \mathbb{N}\smallsetminus\{0\}\\ 1 \leq j \leq N+1}} M_{(k)} = \sum_{\substack{n_j \in \mathbb{N}\smallsetminus\{0\}\\ 1 \leq  j \leq N+1}} \tau_{k}\cdot \widetilde{M}_{(k)} = \sum_{\substack{n_j\in \mathbb{N}\smallsetminus\{0\}\\ 1 \leq j \leq N+1}} \widetilde{M}_{(k)}. \]
We deduce that $\sum_{n_j\in \mathbb{N}\smallsetminus\{0\}} a^{k}_1$ (for each $2 \leq k \leq N$) and $\sum_{n_i\in \mathbb{N}\smallsetminus\{0\}} b^{N}_1$ are $\mathbb{Z}$-linear combinations of $(N+1)$-tuple zeta values.
\end{proof}



Therefore $P_1$ is indeed a finite $\mathbb{Z}$-linear combination of $(N+1)$-tuple zeta values of weight $\sum_{1 \leq i \leq N+2}\sigma_{i} + \sum_{1\leq j \leq N-1}\mu_{j}$.

We can obtain similar results for other $P_{\gamma}$ and obtain $a^{i}_{\gamma} (1 \leq i \leq N+1) $. 
Therefore we get 
\[Z_{I,\nu}(\Gamma, \partial  \Gamma) = \sum_{n_1, \ldots, n_{N+1} \in \mathbb{N}\smallsetminus\{0\} } \sum_{1\leq i \leq N+1}\sum_{\gamma \in S_{d-1}}  a^{i}_{\gamma}\] 
\[ = \sum_{n_1, \ldots, n_{N+1} \in \mathbb{N}\smallsetminus\{0\} } \sum_{\alpha \in S_{d}}\sum_{\tilde{t}^{\alpha}_{i}} \frac{C_{\alpha,\tilde{t}^{\alpha}_{i}}}{n^{\tilde{t}^{\alpha}_1}_{\alpha \cdot 1}(n_{\gamma\cdot 1}+n_{\alpha\cdot 2})^{\tilde{t}^{\alpha}_2}\cdots (n_{\alpha \cdot 1}+\ldots + n_{\alpha \cdot d})^{\tilde{t}^{\alpha}_{d}}},\]
\[= \sum_{\alpha \in S_{d}}\sum_{\tilde{t}^{\alpha}_{i}}C_{\alpha,\tilde{t}^{\alpha}_{i}} \zeta (\tilde{t}^{\alpha}_1, \cdots, \tilde{t}^{\alpha}_d), \]
where $\alpha = \tau_i\cdot \gamma $ ($1 \leq i \leq N+1$ and $\gamma \in S_{d-1}$) is an element of $ S_d$.  
Note that $\tau_i$ ($1 \leq i \leq N+1$) and $\gamma \in S_{d-1}$ generate the symmetric group $S_d$ in the sense that
\[ S_{d} = S_{N+1} = \coprod^{N+1}_{i =1}\tau_{i}S_{N} = \coprod^{N+1}_{i =1}\tau_{i}S_{d-1}.\]

So we have finished the proof of the case $N = n+1$, $Z_{I,\nu}(\Gamma, \partial  \Gamma) $ is indeed a finite $\mathbb{Z}$-linear combination of $(N+1)$-tuple zeta values of weight $\sum_{1 \leq i \leq N+2}\sigma_{i} + \sum_{1\leq j \leq N-1}\mu_{j}$.
\\

   (\textbf{II}).\quad $\Gamma$ \textbf{is an arbitrary tree}.\\
The demonstration is quite similar to the previous proof for plane trivalent trees. Let $\Gamma$ be an arbitrary plane tree with $N$ internal vertices. For each internal vertex $w_j (1\leq j \leq N)$, the valency $val(w_j) = 3 + \beta_j (\beta_j \geq 0)$. $\Gamma$ has $N-1$ internal edges, $N+2 + \sum_{1\leq j \leq N}\beta_{j}$ external edges and $N+2 + \sum_{1\leq j \leq N}\beta_{j}$ external vertices $v_{i}$. Therefore the rank $d = \mathrm{rank}(H_{1}(\Gamma, \partial \Gamma))$ is not equal $N+1$ any more, but equal to
\[ d = N+1 + \sum_{1\leq j \leq N}\beta_{j}.\]

We give a clockwise order to the set of all external vertices $v_i$. We will also give an order for all internal vertices, such that  the internal vertex $w_{N}$, decorated by the variable $x_{N}$, is connected with the external vertices 
$$v_{N +1 +\sum_{1\leq j\leq N-1}\beta_{j} }, \ldots , v_{N+2 + \sum_{1\leq j\leq N}\beta_{j}}$$
 by external edges 
 $$\overrightarrow{e_{N+1+\sum_{1\leq j\leq N-1}\beta_{j} }} = (w_N\longrightarrow v_{N+1+\sum_{1\leq j\leq N-1}\beta_{j} }), \ldots ,\overrightarrow{e_{N+2 + \sum_{1\leq j\leq N}\beta_{j} }} = (w_N \longrightarrow v_{N+2+\sum_{1\leq j\leq N}\beta_{j} }).$$ 
We will again apply mathematical induction on the number of internal vertices.

\textbf{The initial step}: the case $N= 1$. Let $\beta + 3$ be the valency of the unique internal vertex of $\Gamma$.
If $\beta = 1$, the theorem is done due to Example \ref{ex:1'}. We need to prove the case if $\beta \ (\geq 1)$. Recall that in this case the rank $d$ of the tree is $ \beta + 2 $.

 The subdivision map is fixed, which means that for each (external) edge $e_{i} ( 1\leq i \leq 3+\beta)$, $\sigma_{i} -1 (\sigma_{i}\geq 1)$ points are added. The unique internal vertex is decorated by $x$, each external vertex $v_{i}$ is decorated by $x_{v_{i}}$. For each $e_i ( 1 \leq i \leq 2+\beta)$, the given sign $\nu_i$ equals $0$. Then the sign $\nu_{3+\beta}$ is forced to be $1$, since 
 \[\sum^{3+\beta}_{i=1} n_i = 0.\]

$$G_{I,\nu,\Gamma, \partial \Gamma}(\{x_{v}\}_{v\in \partial \Gamma}, 1) = \sum_{\substack{n_{1}+ \cdots + n_{3+\beta}=0,n_{i}\in \mathbb{Z}\smallsetminus \{0\};\\\mathrm{sgn}(n_{i})=(-1)^{\nu_i}, 1 \leq i \leq 2+\beta}}\frac{e^{2\pi i( n_{1}x_{v_{1}}+\cdots +n_{3+\beta}x_{v_{3+\beta}})}}{\prod^{3+\beta}_{i=1}|n_{i}|^{\sigma_{i}}}.
$$
Then 
$$ Z_{I,\nu}(\Gamma, \partial \Gamma) = G_{I,\nu,\Gamma, \partial \Gamma}(\{0\}_{v\in \partial \Gamma}, 1) =  \sum_{\substack{n_{1}+ \cdots + n_{3+\beta}=0,n_{i}\in \mathbb{Z}\smallsetminus \{0\};\\\mathrm{sgn}(n_{i})=(-1)^{\nu_i}, 1 \leq i \leq 2+\beta}}\frac{1}{\prod^{3+\beta}_{i=1}|n_{i}|^{\sigma_{i}}}$$
\[Z_{I,\nu}(\Gamma, \partial \Gamma) = \sum_{\substack{n_i \in \mathbb{N}\smallsetminus\{0\}\\1 \leq i \leq 2+\beta}}\frac{1}{\left(\prod^{2+\beta}_{i=1}n_{i}^{\sigma_{i}}\right)(n_{1}+\cdots+n_{2+\beta})^{\sigma_{3+\beta}}} .\]

\begin{note} If $0 \leq k \leq \beta$,
\[ Q^{(k)} \left( x_1, \cdots, x_{k+1}; x_{k+2}, \cdots, x_{\beta+2}; x_{\beta+3}\right)\]
\[= \frac{1}{\left(\prod^{k+1}_{j=1}n(1, j)^{x_j} \right) \left(\prod^{\beta+2}_{j=k+2}n_j^{x_j} \right) n(1, \beta+2)^{x_{\beta+3}}},\]
where each $x_j$ is a positive integer and the symbol $n(1,j)$ is defined in Notation (\ref{notation}).
\end{note}


In particular, let us consider
$$Q^{(0)}\left(\sigma_{1} ;  \sigma_{2}, \cdots, \sigma_{\beta+2}; \sigma_{\beta+3} \right)  = \frac{1}{\left(\prod^{2+\beta}_{i=1}n_{i}^{\sigma_{i}}\right)(n(1, \beta + 2)^{\sigma_{\beta+3}}},$$
then 
\[Z_{I,\nu}(\Gamma, \partial \Gamma) = \sum_{\substack{n_i \in \mathbb{N}\smallsetminus\{0\}\\1 \leq i \leq 2+\beta}} Q^{(0)}\left(\sigma_{1} ;  \sigma_{2}, \cdots, \sigma_{2+\beta};\sigma_{3+\beta} \right).\]

Now we show for $0 \leq q \leq \beta -1 $ how to express 
$$Q^{(q)}(x_1, \cdots, x_{q+1}; x_{q+2}, \cdots x_{\beta+2}; x_{\beta +3})$$
from $Q^{(q+1)}(\cdots; \cdots; \cdot)$ through several uses of Eisenstein's trick. We also show that $Q^{(\beta)}(\cdots)$ can be written
as a finite linear combination of classeical MZVs. Therefore we deduce the theorem by induction.

We define $q+2$ permutations as follows. 
\[ \tau_{1}^{(q)} = Id; \]
\[
  \tau_{j}^{(q)}= \bigl(\begin{smallmatrix}
    2+ q- (j-1) & 2+q-(j-2) &  \cdots & 1+q & 2+q\\    
    2+q & 2+q-(j-1) & \cdots &  q  & 1+q
  \end{smallmatrix}\bigr), \ 2 \leq j \leq q+2 .
\]

The upper right symbol $(q)$ of $ \tau_{j}^{(q)}$ signifies the dependence of the permutation on the number $q$.  

Let these permutations $\left(\tau^{(q)}_{j} \right)_{1\leq j \leq q+2}$ act on $Q^{(k)}(\cdots; \cdots; \cdot)$ by permutating the indices $i$ of $n_{i}$ without changing the exponents, 
\[\tau^{(q)}_j \cdot Q^{(k)} \left( x_1, \cdots, x_{k+1}; x_{k+2}, \cdots, x_{\beta+2}; x_{\beta+3}\right)\]
\[= \frac{1}{\left(\prod^{k+1}_{p =1}(\sum^{p}_{l=1} n_{\tau^{(q)}_{j}\cdot l})^{x_p} \right) \left(\prod^{\beta+2}_{p=k+2}n_{\tau^{(q)}_{j}\cdot p}^{x_p} \right) \left(\sum ^{\beta+2}_{p =1} n_{\tau^{(q)}_{j}\cdot p}\right)^{x_{\beta+3}}}.\]

For abbreviation, we denote for $1 \leq j \leq q$,
\[ Q^{(q+1)}_{\tau^{(q)}_j}\left(r_l ,s_l\right)_{1\leq l \leq j} = \tau^{(q)}_j \cdot Q^{(q+1)}\left(x_1, \cdots, x_{q+1-j}, s_j, r_j, \cdots, r_1; x_{q+3}, \cdots, x_{\beta +2}; x_{\beta+2} \right); \]

\[ Q^{(q+1)}_{\tau^{(q)}_{q+1}}\left(r_l ,s_l\right)_{1\leq l \leq q+1} = \tau^{(q)}_{q+1} \cdot Q^{(q+1)}\left( s_{q+1}, r_{q+1}, \cdots, r_1; x_{q+3}, \cdots, x_{\beta +2}; x_{\beta+2} \right); \]
\[ Q^{(q+1)}_{\tau^{(q)}_{q+2}}\left(r_l ,s_l\right)_{1\leq l \leq q+2} = \tau^{(q)}_{q+2} \cdot Q^{(q+1)}\left( s_{q+1}, r_{q+1}, \cdots, r_1; x_{q+3}, \cdots, x_{\beta +2}; x_{\beta+2} \right), \]
and for all $1 \leq j \leq q+2$, 
\[r_1 + s_1 = x_{q+2} + x_{q+1}; \quad r_l + s_l  =  s_{l-1} + x_{q-l+2}, \ 2 \leq l \leq j. \] 
Then for $1 \leq j \leq q+2$,
\[ \sum^{q+1-j}_{l=1}x_l + s_j + \sum^{j}_{l=1}r_l + \sum^{\beta+3}_{l= q+3}x_l = \sum ^{\beta+3}_{l=1}x_l.\]

We denote 
\[\sum Q^{(q+1)}_{\tau^{(q)}_1} = \sum_{r_1 + s_1 = x_{q+2} + x_{q+1}}C^{x_{q+2}-1}_{r_1 -1}Q^{(q+1)}_{\tau^{(q)}_1}\left(r_1 ,s_1\right), \]
and if $2 \leq j \leq q+2$, 
\[\sum Q^{(q+1)}_{\tau^{(q)}_j} = \sum_{\substack{r_1 + s_1 = x_{q+2} + x_{q+1}; \\ r_l + s_l  =  s_{l-1} + x_{q-l+2}, \\ 2 \leq l \leq j}} C^{(s_{j-1})-1}_{r_j-1}C^{(x_{q-j+3})-1}_{(r_{j-1})-1}\cdots C^{x_{q+1}-1}_{r_1-1} Q^{(q+1)}_{\tau^{(q)}_j}\left(r_l ,s_l\right)_{1\leq l \leq j} \]

We give a rooted tree illustrating the relation between the case $k=q$ and the case $k = q + 1$ by a repeated times of Eisenstein's trick. For the first time, $Eis\left(n^{x_{q+2}}_{q+2}, n(1, q+1)^{x_{q+1}}\right)$ is applied. Then for the level $2 \leq l \leq q+1$, $Eis\left(n^{s_{l-1}}_{q +2}, n(1, q-l+2)^{x_{q-l +2}} \right)$ is applied.
 
\begin{forest}
qtree edges
[$Q^{(q)}$
 [ $\sum Q^{(q+1)}_{Id}$ 
 ]
 [ $\cdot$ [ $\sum Q^{q+1}_{\tau_{2}^{(q)}}$  ]
                         [$\cdot$ [$\sum Q^{(q+1)}_{\tau_{3}^{(q)}}$] [$\cdot$
                            []
                            [$\cdots $
                              [$\sum Q^{q+1}_{\tau_{q+1}^{(q)}}$]
                              [$\sum Q^{q+1}_{\tau_{q+2}^{(q)}}$]
                            ]
                         ] ]
 ] 
]                          
\end{forest} 

In conclusion 
\[Q^{(q)} = \sum Q^{(q+1)}_{Id} + \sum Q^{(q+1)}_{\tau_{2}^{(q)}} + \cdots + \sum Q^{(q+1)}_{\tau_{q+1}^{(q)}} +\sum Q^{(q+1)}_{1,\tau_{q +2}^{(q)}}. \]

When $k = \beta $, 
\[Q^{(\beta)}\left( x_{1} , \cdots,   x_{\beta+1} ;  x_{2+\beta}; x_{3+\beta}\right)
= \frac{1}{n_{\beta+2}^{x_{\beta+2}}n(1, 2+\beta)^{x_{3+\beta}}\left( \prod^{1+\beta}_{j=1} n(1, j)^{x_{j}}\right)}.\]

We define $\tau_1^{(\beta)} =Id$ and if $2 \leq j \leq \beta+2$
\[
  \tau_{j}^{(\beta)}= \bigl(\begin{smallmatrix}
    2 + \beta- (j-1) & 2+\beta-(j-2) &  \cdots & 1+\beta & 2+\beta\\    
    2 +\beta & 2+\beta -(j-1) & \cdots &  \beta   & 1+\beta
  \end{smallmatrix}\bigr) \in S_{2+\beta}.
\]

From the proof for plane trivalent trees (the calculation of $P_1$), we know that there exist a family of integers $\left( t^{\tau_{j}^{(\beta)}}_{i} \right)_{1\leq i, j \leq \beta+2} \in \mathbb{N} \smallsetminus \{0\}$ and $\left( C_{\tau_{j}^{(\beta)}, t^{\tau_{j}^{(\beta)}}_{i}} \right)_{i,j} \in \mathbb{Z}$ such that
\[Q^{(\beta)}\left( x_{1} , \cdots,   x_{\beta+1} ;  x_{2+\beta}; x_{3+\beta}\right)\]

can be written as 
\[  = \sum_{\substack{\tau_{j}^{(\beta)}\\ 1 \leq j \leq \beta+2}}\sum_{\substack{t^{\tau_{j}(\beta)}_{i} \\ 1 \leq i \leq \beta +2}} \frac{C_{\tau_{j}^{(\beta)},(t^{\tau_{j}^{(\beta)}}_{i})_i}}{ n ^{t^{\tau_{j}^{(\beta)}}_1}_{\tau^{(\beta)}_{j} \cdot 1}(n_{\tau^{(\beta)}_{j}\cdot 1}+n_{\tau^{(\beta)}_{j}\cdot 2})^{t^{\tau_{j}^{(\beta)}}_2}\cdots (n_{\tau^{(\beta)}_{j} \cdot 1}+\ldots + n_{\tau^{(\beta)}_{j}\cdot (\beta+2)})^{t^{\tau_{j}^{(\beta)}}_{\beta+2}}},\]
where
\[ t^{\tau_j^{(\beta)}}_1 + \cdots t^{\tau_j^{(\beta)}}_{\beta+2} = \sum^{\beta+3}_{i=1}x_{i}, \quad \forall \  1 \leq j \leq \beta +2.\]
\\

Now we conclude by induction that
$$Q^{(0)}\left(
  \sigma_{1} ;  \sigma_{2}, \cdots, \sigma_{\beta+2}
\right) $$

can be written as 
\[  =  \sum_{\widehat{\tau}}\sum_{\tilde{t}^{\widehat{\tau}}_{i}} \frac{C_{\widehat{\tau},\tilde{t}^{\widehat{\tau}}_{i}}}{n^{\tilde{t}^{\widehat{\tau}}_1}_{\widehat{\tau} \cdot 1}(n_{\widehat{\tau}\cdot 1}+n_{\widehat{\tau}\cdot 2})^{\tilde{t}^{\widehat{\tau}}_2}\cdots (n_{\widehat{\tau} \cdot 1}+\ldots + n_{\widehat{\tau}\cdot (\beta+2})^{\tilde{t}^{\widehat{\tau}}_{\beta+2}}},\]
and
\[\tilde{t}^{\widehat{\tau}}_1 + \tilde{t}^{\widehat{\tau}}_2 + \cdots +\tilde{t}^{\widehat{\tau}}_{\beta+2} = \sum^{\beta+3}_{i=1} \sigma_{i},\]
where 
\[ \widehat{\tau} = \tau_{l_{\beta}}^{(\beta)}\tau_{l_{\beta-1}}^{(\beta-1)}\cdots \tau_{l_0}^{(0)}, \]
for each $0 \leq m \leq \beta$, we have $\tau_{1}^{(m)} = Id$ and, if $2 \leq l_m \leq m + 2 $, 
\[
  \tau_{l_m}^{(m)}= \bigl(\begin{smallmatrix}
    2+ m- (l_m -1) & 2+m-(l_m -2) &  \cdots & 1+m & 2+m\\    
    2+m & 2+m-(l_m -1) & \cdots &  m  & 1+m
  \end{smallmatrix}\bigr).
\]

\[ | \{\tau_{l_m}^{(m)} ; \quad 1 \leq l_{m} \leq m+2 , \quad 0 \leq m \leq \beta\} | = (\beta+2)(\beta+1)\cdots 2 = (\beta+2)! = |S_d|. \]

Therefore we have
\[ Z_{I, \nu}(\Gamma, \partial \Gamma) = \sum_{\widehat{\tau}}\sum_{\tilde{t}^{\widehat{\tau}}_{i}}C_{\widehat{\tau},\tilde{t}^{\widehat{\tau}}_{i}} 
\zeta\left( \tilde{t}^{\widehat{\tau}}_1, \cdots, \tilde{t}^{\widehat{\tau}}_{\beta +2} \right).\]
Hence theorem \ref{theo:1} holds for a tree with unique internal vertex of valency $\beta + 3$ ($\beta \geq 1$).

\begin{re} [See also Remark \ref{re:1} below]
We should mention that in the case of unique internal vertex, the generalized multiple zeta values are just Mordell-Tornheim zeta values. In \cite{BZ10} Bradley and Zhou proved that Mordell-Tornheim zeta values are $\mathbb{Q}$-linear combination of MZVs. Here we gave a new proof of Bradley-Zhou theorem. Moreover, our proof makes appearance the role of the symmetry group whose rank is equal to the rank of the graph. 
\end{re}

\textbf{The inductive step:}

What we will change for induction in the proof is the following :\\
Recall that $d = N+1+\sum_{1\leq j\leq N}\beta_{j}$, then let $d' = N +\sum_{1\leq j\leq N-1}\beta_{j}.$

In order to deduce the case $N = n+1$ to the case $N -1 = n$, we apply the following operation: we cut down the internal edge $f_{N-1}$, one of whose ends is the internal vertex $w_{N}$ and associate the new external vertex denoted as $v'_{N+1+\sum_{1\leq j\leq N-1}\beta_{j} } = v'_{d' + 1}$ and denote the new external edge as $e'_{N+1+\sum_{1\leq j\leq N-1}\beta_{j} } = e'_{d'+1}$ to which we associate $n_{e'_{N+1+\sum_{1\leq j\leq N-1}\beta_{j} }} = m_{N-1}$ and the subdivision $\mu_{N-1}$, then we build up a new plane tree $\Gamma '$ with $N-1 = n$ internal vertices and whose rank is $d' = d-1-\beta_{N}$, where $d$ is the rank of $\Gamma $. 

\[Z_{I,\nu}(\Gamma ', \partial \Gamma ') = \sum_{\substack{n_i \in \mathbb{N}\smallsetminus\{0\}, 1\leq i \leq d' }}\mathscr{O}_{I,\nu}(\Gamma ', \partial \Gamma '). \]
Then 
$$
Z_{I,\nu}(\Gamma , \partial \Gamma ) = G_{I,\nu,\Gamma,\partial \Gamma}(\{0\}_{v\in \partial \Gamma},1)= \sum_{\substack{n_i \in \mathbb{N}\smallsetminus\{0\},\\1\leq i \leq d}}\frac{1}{\prod^{d-N}_{k = d'-N }|n_{N+1+k}|^{\sigma_{N+1+k}}}\cdot \mathscr{O}_{I,\nu}(\Gamma ', \partial \Gamma ').
$$

Since the number of the internal vertices of $\Gamma '$ is n, then the theorem for $\Gamma '$ holds by the inductive hypothesis.

Hence we have the following equality: 
\[ \mathscr{O}_{I,\nu}(\Gamma ', \partial \Gamma ') = \sum_{\gamma \in S_{d'}} \sum_{t^{\gamma}_1 , \ldots ,t^{\gamma}_{d'} }\frac{\tilde{C}_{\gamma, t^{\gamma}_{i}}}{n^{t^{\gamma}_1}_{\gamma \cdot 1}(n_{\gamma\cdot 1}+n_{\gamma\cdot 2})^{t^{\gamma}_2}\cdots (n_{\gamma \cdot 1}+\ldots + n_{\gamma \cdot (d')})^{t^{\gamma}_{d'}}}, \]
where $$t^{\gamma}_1 + \ldots + t^{\gamma}_{d'} = \sum_{1\leq i \leq d'}\sigma_{i} + \sum_{1\leq j\leq N - 1}\mu_{j}, \quad \forall \gamma \in S_{d'}. $$
Now 
\[ P_{\gamma} = \frac{1}{\prod^{d-N}_{k = d'-N} |n_{N+1+k}|^{\sigma_{N+1+k}}}\frac{\tilde{C}_{\gamma,t^{\gamma}_i}}{n^{t^{\gamma}_1}_{\gamma \cdot 1}(n_{\gamma\cdot 1}+n_{\gamma\cdot 2})^{t^{\gamma}_2}\cdots (n_{\gamma \cdot 1}+\ldots + n_{\gamma \cdot (d')})^{t^{\gamma}_{d'}}}, \]
where $d' = N + \sum_{1\leq j \leq N-1}\beta_{j}$ and 
$$
\sum^{d+1}_{i =1}n_i = 0.
$$
Then 
\[ P_{\gamma} = \frac{1}{n_{d'+1}^{\sigma_{d'+1}}\cdots n_{d'+1+\beta_N}^{\sigma_{d'+1+\beta_N}}(n_1+\cdots + n_{d'+1+\beta_{N}})^{\sigma_{d'+2+\beta_N}}}\frac{\tilde{C}_{\gamma,t^{\gamma}_i}}{n^{t^{\gamma}_1}_{\gamma \cdot 1}(n_{\gamma\cdot 1}+n_{\gamma\cdot 2})^{t^{\gamma}_2}\cdots (n_{\gamma \cdot 1}+\ldots + n_{\gamma \cdot (d')})^{t^{\gamma}_{d'}}}, \]
and
\[Z_{I,\nu}(\Gamma , \partial \Gamma ) = \sum_{\substack{n_i \in \mathbb{N}\smallsetminus\{0\},\\1\leq i \leq d }}\sum_{\gamma \in S_{d'}} \sum_{t^{\gamma}_1 , \ldots ,t^{\gamma}_{d'}} P_{\gamma} . \]
Without losing generality, we can focus on $P_{1}$, where $1$ is the identity permutation.

\textbf{Calculation of} $P_1$. 
\[ P_{1} = \frac{1}{n_{d'+1}^{\sigma_{d'+1}}\cdots n_{d'+1+\beta_N}^{\sigma_{d'+1+\beta_N}}(n_1+\cdots + n_{d'+1+\beta_{N}})^{\sigma_{d'+2+\beta_N}}}\frac{\tilde{C}_{1,t^{1}_i}}{(n_{1}+\ldots + n_{ d'})^{t^{1}_{d'}}\cdots (n_{1}+n_{2})^{t^{1}_2}n^{t^{1}_1}_{1}}. \]

If $\beta_{N} =  0$, then we return to the calculation of $P_1$ for a plane trivalent tree. If $\beta_{N} > 0$, 
the number of monomials before the polynomial $(n_1+\cdots + n_{d'+1+\beta_{N}})^{\sigma_{d'+2+\beta_N}}$ is no more 1. 

We prove by the mathematical induction that $\mathscr{O}_{I,\nu}(\Gamma, \partial \Gamma)$ can be written in the form
\[ \sum_{\gamma \in S_d}\sum_{\substack{t^{\gamma}_{i} \\1 \leq i \leq d}} \frac{C_{\gamma}}{n^{t^{\gamma}_1}_{\gamma \cdot 1}(n_{\gamma\cdot 1}+n_{\gamma\cdot 2})^{t^{\gamma}_2}\cdots (n_{\gamma \cdot 1}+\ldots + n_{\gamma \cdot d})^{t^{\gamma}_{d}}} \]

with $d = d'+1+\beta_{N}$, and therefore deduce the theorem from  
$$
Z_{I,\nu}(\Gamma, \partial  \Gamma) = \sum_{\substack{n_{i}\in \mathbb{N}\smallsetminus\{0\},\\1 \leq i \leq d}} \mathscr{O}_{I,\nu}(\Gamma, \partial \Gamma).
$$

\begin{defi} If $0 \leq k \leq \beta_{N}$,
\[ P^{(k)}\left(x_{1}, \cdots, x_{d'+k} ; x_{d'+k+1}, \cdots, x_{d'+1+\beta_{N}}; x_{d'+2+\beta_{N}}\right)\]
\[= \frac{1}{ \left(\prod^{d'+1+\beta_N}_{j = d'+1+ k}n_{j}^{x_{j}}\right)\left( \prod^{d'+k}_{j=1} n(1, j)^{x_j}\right) n(1, d'+1+\beta_{N})^{x_{d'+2+\beta_N}}},\]
where $(x_j)_{1\leq j \leq d' + \beta_N}$ are all positive integers.
\end{defi}
In particular 
$$ P_1 = \tilde{C}_1 P^{(0)}\left( t^1_{1}, \cdots,  t^1_{d'}; \sigma_{d'+1}, \cdots, \sigma_{d'+1+\beta_{N}}; \sigma_{d'+2+\beta_{N}}\right).$$ 


If $k = \beta_{N}$,
\[P^{(\beta_{N})}\left( x_{1}, \cdots,  x_{d'+\beta_{N}} ; x_{d'+1+\beta_{N}}; x_{d'+2+\beta_{N}}\right)\]
\[= \frac{1}{ n_{\beta_N}^{x_{\beta_N}}\left( \prod^{d'+\beta_N}_{j=1} n(1, j)^{x_j}\right) n(1, d'+1+\beta_{N})^{x_{d'+2+\beta_N}}},\]

By the demonstration for any plane trivalent tree, $P^{(\beta_{N})}\left( x_{1}, \cdots,  x_{d'+\beta_{N}} ; x_{d'+1+\beta_{N}}; x_{d'+2+\beta_{N}}\right)$ can be written as 
\[  =  \sum_{\tau_{j}}\sum_{\tilde{t}^{\tau_{j}}_{i}} \frac{C_{\tau_{j},\tilde{t}^{\tau_{j}}_{i}}}{n^{\tilde{t}^{\tau_{j}}_1}_{\tau_{j} \cdot 1}(n_{\tau_{j}\cdot 1}+n_{\tau_{j}\cdot 2})^{\tilde{t}^{\tau_{j}}_2}\cdots (n_{\tau_{j} \cdot 1}+\ldots + n_{\tau_{j}\cdot (d'+1+\beta_{N}})^{\tilde{t}^{\tau_{j}}_{d'+1+\beta_{N}}}},\]
where the permutation $\tau_1 =Id$ and if $2 \leq j \leq d'+\beta_{N}+1$
\[
  \tau_{j}= \bigl(\begin{smallmatrix}
    d'+1 + \beta_{N}- (j-1) & d'+1+\beta_{N}-(j-2) &  \cdots & d'+\beta_{N} & d'+1+\beta_{N}\\    
    d'+1+\beta_{N} & d'+1+\beta_{N}-(j-1) & \cdots &  d'+\beta_{N}-1  & d'+\beta_{N}
  \end{smallmatrix}\bigr),
\]
and
\[\sum^{d'+\beta_N +1}_{l =1}\tilde{t}^{\tau_{j}}_l = \sum^{d'+\beta_N +2}_{l=1}x_l, \ \forall 1 \leq  j \leq d'+\beta_N +1,   \]
and the constant $C_{\tau_{j},\tilde{t}^{\tau_{j}}_{i}}$ is an integer.

Now we will explain for $0 \leq q \leq \beta_N -1 $ how to express 
$$P^{(q)}\left(x_{1}, \cdots, x_{d'+q} ; x_{d'+q+1}, \cdots, x_{d'+1+\beta_{N}}; x_{d'+2+\beta_{N}}\right)$$
from $P^{(q+1)}(\cdots; \cdots; \cdot)$ through several uses of Eisenstein's trick.

Let us define 
 $\tau_{1}^{(q)} = Id$ and, if $2 \leq j \leq d'+ 1+ q$, 
\[
  \tau_{j}^{(q)}= \bigl(\begin{smallmatrix}
    d'+1 + q- (j-1) & d'+1+q-(j-2) &  \cdots & d'+q & d'+1+q\\    
    d'+1+q & d'+1+q-(j-1) & \cdots &  d'+q-1  & d'+q
  \end{smallmatrix}\bigr).
\]

As in the previous step, we let these permutations act on $P^{(k)}(\cdots; \cdots; \cdot)$ by permuting the indices $i$ of $n_i$ without changing 
the exponents. 

For abbreviation, we denote for $1 \leq j \leq  d' + q -1 $,
\[ P^{(q+1)}_{\tau^{(q)}_j}\left(r_l ,s_l\right)_{1\leq l \leq j} = \tau^{(q)}_j \cdot P^{(q+1)}\left(x_1, \cdots, x_{q+d'-j}, s_j, r_j, \cdots, r_1; x_{d' + q +2}, \cdots, x_{d'+\beta_N +2}; x_{d'+\beta_N+2} \right); \]

\[ P^{(q+1)}_{\tau^{(q)}_{d' + q}}\left(r_l ,s_l\right)_{1\leq l \leq d'+ q} = \tau^{(q)}_{d' + q} \cdot P^{(q+1)}\left( s_{d'+q}, r_{d'+q}, \cdots, r_1; x_{d'+ q +2}, \cdots, x_{d'+\beta_N +1}; x_{d' + \beta_N+2} \right); \]
\[ P^{(q+1)}_{\tau^{(q)}_{d'+q+1}}\left(r_l ,s_l\right)_{1\leq l \leq d' + q} = \tau^{(q)}_{d'+q+1} \cdot P^{(q+1)}\left( s_{d'+q}, r_{d'+q}, \cdots, r_1; x_{d'+q+2}, \cdots, x_{d'+\beta_N +1}; x_{d'+\beta_N+2} \right). \]
And for all $1 \leq j \leq d'+ q$, 
\[r_1 + s_1 = x_{d'+q+1} + x_{d'+q}; \quad r_l + s_l  =  s_{l-1} + x_{d'+q-l+1}, \ 2 \leq l \leq j. \] 
Then for $1 \leq j \leq d'+ q$,
\[ \sum^{d'+q -j}_{l=1}x_l + s_j + \sum^{j}_{l=1}r_l + \sum^{d'+2+\beta_N}_{l= d'+q+2}x_l = \sum ^{d'+\beta_N +2}_{l=1}x_l.\]

We denote 
\[\sum P^{(q+1)}_{\tau^{(q)}_1} = \sum_{r_1 + s_1 = x_{d'+q+1} + x_{d'+q}}C^{x_{d'+q+1}-1}_{r_1 -1}P^{(q+1)}_{\tau^{(q)}_1}\left(r_1 ,s_1\right), \]
and if $2 \leq j \leq d'+ q+1$, 
\[\sum P^{(q+1)}_{\tau^{(q)}_j} = \sum_{\substack{r_1 + s_1 = x_{d'+q+1} + x_{d'+q}; \\ r_l + s_l  =  s_{l-1} + x_{d'+q-l+1}, \\ 2 \leq l \leq j}} C^{(s_{j-1})-1}_{r_j-1}C^{(x_{d'+q-j+2})-1}_{(r_{j-1})-1}\cdots C^{x_{d'+q}-1}_{r_1-1} Q^{(q+1)}_{\tau^{(q)}_j}\left(r_l ,s_l\right)_{1\leq l \leq j} \]

We give a rooted tree illustrating the relation between the case $k=q$ and the case $k = q + 1$ by a repeated use of Eisenstein's trick. For the first time, $Eis\left(n^{x_{d'+q+1}}_{d'+q+1}, n(1, d'+q)^{x_{d'+q}}\right)$ is applied. Then for the level $2 \leq l \leq q+1$, $Eis\left(n^{s_{l-1}}_{d'+q +1}, n(1, d'+q-l+1)^{x_{d'+q-l +1}} \right)$ is applied.
 
\begin{forest}
qtree edges
[$P^{(q)}$
 [ $\sum P^{(q+1)}_{Id}$ 
 ]
 [ $\cdot$ [ $\sum P^{(q+1)}_{\tau_{2}(q)}$  ]
                         [$\cdot$ [$\sum P^{(q+1)}_{\tau_{3}(q)}$] [$\cdot$
                            []
                            [$\cdots $
                              [$\sum P^{(q+1)}_{\tau_{d'+q}^{(q)}}$]
                              [$\sum P^{(q+1)}_{\tau_{d'+q +1}^{(q)}}$]
                            ]
                         ] ]
 ] 
]                          
\end{forest}

Then in conclusion, we can write
\[P^{(q)} = \sum P^{(q+1)}_{Id} + \sum P^{(q+1)}_{\tau_{2}^{(q)}} + \cdots + \sum P^{(q+1)}_{\tau_{d'+q}^{(q)}} +\sum P^{(q+1)}_{\tau_{d'+q +1}^{(q)}}. \]

By induction, we conclude that
$$P^{(0)}\left( t^1_{1}, \cdots,  t^1_{d'}; \sigma_{d'+1}, \cdots, \sigma_{d'+1+\beta_{N}}; \sigma_{d'+2+\beta_{N}}\right) $$
 can be written as 
\[  =  \sum_{\widehat{\tau}}\sum_{\tilde{t}^{\widehat{\tau}}_{i}} \frac{C_{\widehat{\tau},\tilde{t}^{\widehat{\tau}}_{i}}}{n^{\tilde{t}^{\widehat{\tau}}_1}_{\widehat{\tau}_{k} \cdot 1}(n_{\widehat{\tau}\cdot 1}+n_{\widehat{\tau}\cdot 2})^{\tilde{t}^{\widehat{\tau}}_2}\cdots (n_{\widehat{\tau} \cdot 1}+\ldots + n_{\widehat{\tau}\cdot (d'+1+\beta_{N}})^{\tilde{t}^{\widehat{\tau}}_{d'+1+\beta_{N}}}}\]
\[ = \sum_{\widehat{\tau}}\sum_{\tilde{t}^{\widehat{\tau}}_{i}} C_{\widehat{\tau},\tilde{t}^{\widehat{\tau}}_{i}}\zeta (\tilde{t}^{\widehat{\tau}}_1, \tilde{t}^{\widehat{\tau}}_2, \cdots , \tilde{t}^{\widehat{\tau}}_{d'+1+\beta_{N}})  ,\] 
where 
\[ \widehat{\tau} = \tau_{l_{\beta_N}}^{(\beta_N)}\tau_{l_{\beta_{N-1}}}^{(\beta_{N}-1)} \cdots \tau_{l_0}^{(0)}, \]
where $\tau_{1}^{(m)} = Id$ and, if $2 \leq l_m \leq d'+ 1+ m$, $0 \leq m \leq \beta_{N}$,
\[
  \tau_{l_m}^{(m)}= \bigl(\begin{smallmatrix}
    d'+1 + m- (l_m-1) & d'+1+m-(l_m-2) &  \cdots & d'+m & d'+1+m\\    
    d'+1+m & d'+1+m-(l_m-1) & \cdots &  d'+m-1  & d'+m
  \end{smallmatrix}\bigr).
\]
Let the set 
\[ D = \{ \widehat{\tau} = \prod_{0 \leq m \leq \beta_{N}}\tau_{l_m}(m) ; \quad 2 \leq l_m \leq d'+ 1+ m \}, \]
\[ |D| = (d'+1+\beta_{N})(d'+\beta_{N})\cdots (d'+1). \]
For other $\gamma \in S_{d'}$, we can obtain the same result of $P_{\gamma}$, and 
\[ S_{d} = \coprod_{\widehat{\tau}\in D}\widehat{\tau} \cdot S_{d'} .\]
\end{proof} 

\begin{re}
Eisenstein's trick has been formalized by Sczech in his theory of Eisenstein cocycles. It also
appears in one of the proofs of the shuffle relations for MZV's. This is probably no coincidence.
\end{re}

\paragraph{\textbf{Proof of Theorem \ref{thm:poly}}}
\begin{proof} 
For a general tree $\Gamma$, by the proof of Theorem \ref{theo:1}, $\mathscr{O}_{I,\nu}(\Gamma, \partial \Gamma)$ can be written as
\[ \sum_{\gamma \in S_d}\sum_{\substack{t^{\gamma}_{j} \\1 \leq j \leq d}} \frac{C_{\gamma,t^{\gamma}_j}}{n^{t^{\gamma}_1}_{\gamma \cdot 1}(n_{\gamma\cdot 1}+n_{\gamma\cdot 2})^{t^{\gamma}_2}\cdots (n_{\gamma \cdot 1}+\ldots + n_{\gamma \cdot d})^{t^{\gamma}_{d}}} \]

with $d$ the rank of $\Gamma$, and therefore conclude that 
$$
Z_{I,\nu}(\Gamma, \partial  \Gamma) = \sum_{\substack{n_{j}\in \mathbb{N}\smallsetminus\{0\},\\1 \leq j \leq d}} \mathscr{O}_{I,\nu}(\Gamma, \partial \Gamma).
$$
By careful observation, we get an expression
\[G_{I,\nu,\Gamma, \partial \Gamma}(\{x_{v}\}_{v\in \partial \Gamma}, 1) = \sum_{\substack{n_{j}\in \mathbb{N}\smallsetminus\{0\},\\1 \leq i \leq d}}e^{2\pi i(\sum^{d+1}_{j=1}n_jx_{v_{j}})}\left(\sum_{\gamma \in S_d}\sum_{\substack{t^{\gamma}_{j} \\1 \leq i \leq d}} \frac{C_{\gamma,t^{\gamma}_j}}{n^{t^{\gamma}_1}_{\gamma \cdot 1}(n_{\gamma\cdot 1}+n_{\gamma\cdot 2})^{t^{\gamma}_2}\cdots (n_{\gamma \cdot 1}+\ldots + n_{\gamma \cdot d})^{t^{\gamma}_{d}}}\right), \]
where $C_{\gamma,t^{\gamma}_{j}} \in \mathbb{Z}$.
In fact, the Eisenstein trick is applied for the denominators, during these repeated operations the numerator remains unchanged.
A simple calculation gives
\[ \sum^{d+1}_{j=1} n_jx_{v_{j}} = \sum^{d}_{l=1}(\sum^{l}_{j=1}n_{j})(x_{v_{l}}-x_{v_{l+1}}).\]
Let 
$$z_{j} = e^{2\pi i (x_{v_{j}}-x_{v_{j+1}})} , \quad 1 \leq j \leq d, $$
and 
$$ z_{\gamma \cdot j} = e^{2\pi i (x_{v_{\gamma \cdot j}}-x_{v_{\gamma \cdot (j+1)}})}, \quad \gamma \in S_{d},$$
then 
\[G_{I,\nu,\Gamma, \partial \Gamma}(\{x_{v}\}_{v\in \partial \Gamma}, 1) = \sum_{\substack{n_{j}\in \mathbb{N}\smallsetminus\{0\},\\1 \leq j \leq d}}\sum_{\gamma \in S_d}\sum_{\substack{t^{\gamma}_{i} \\1 \leq j \leq d}} \frac{C_{\gamma,t^{\gamma}_j}\prod^{d}_{i=1}z_{\gamma\cdot j}^{\sum^{j}_{l=1}n_{\gamma \cdot l}}}{n^{t^{\gamma}_1}_{\gamma \cdot 1}(n_{\gamma\cdot 1}+n_{\gamma\cdot 2})^{t^{\gamma}_2}\cdots (n_{\gamma \cdot 1}+\ldots + n_{\gamma \cdot d})^{t^{\gamma}_{d}}},\]

\[= \sum_{\gamma \in S_d}\left(\sum_{\substack{t^{\gamma}_{j} \\1 \leq j \leq d}}\sum_{\substack{n_{j}\in \mathbb{N}\smallsetminus\{0\},\\1 \leq j \leq d}}C_{\gamma,t^{\gamma}_j}\frac{\prod^{d}_{j=1}z_{\gamma\cdot j}^{\sum^{j}_{l=1}n_{\gamma \cdot l}}}{n^{t^{\gamma}_1}_{\gamma \cdot 1}(n_{\gamma\cdot 1}+n_{\gamma\cdot 2})^{t^{\gamma}_2}\cdots (n_{\gamma \cdot 1}+\ldots + n_{\gamma \cdot d})^{t^{\gamma}_{d}}}\right), \]
finally we obtain
\[G_{I,\nu,\Gamma, \partial \Gamma}(\{x_{v}\}_{v\in \partial \Gamma}, 1) = \sum_{\gamma \in S_d}\left(\sum_{\substack{t^{\gamma}_{j} \\1 \leq j \leq d}}C_{\gamma,t^{\gamma}_j} Li_{t^{\gamma}_{1},\cdots, t^{\gamma}_{d}}(z_{\gamma\cdot 1},\cdots , z_{\gamma \cdot d})\right).\]

In conclusion, $G_{I,\nu,\Gamma, \partial \Gamma}(\{x_{v}\}_{v\in \partial \Gamma}, 1) $ is indeed a finite $\mathbb{Z}$-linear combination of multiple polylogarithms evaluated at some $N$-th roots of unity.
\end{proof}

\begin{re} \label{re:1}
We should mention that Example (\ref{ex:1}) and Example (\ref{ex:1'}) are typical examples of Mordell-Tornheim zeta values (\cite{BZ10}), which are defined as 
\[T(s_1,\ldots, s_r; s) := \sum^{\infty}_{m_1 =1}\ldots \sum^{\infty}_{m_r =1}\frac{1}{m^{s_1}_1\ldots m^{s_r}_r(m_1 + \ldots +m_r)^{s}},\]
where $s_1, \ldots, s_r$ and $s$ are complex numbers with $s_1 + \ldots +s_r + s = w$ and $r$ is the depth and $w$ is the weight.

For given subdivision map and sign respectively for $\Gamma_1$ and $\Gamma'_1$, it is not difficult to see that
\[Z_{\mathbb{Z},\nu}(\Gamma_1,\partial \Gamma_1) = T(\sigma_1,\sigma_2;\sigma_3); \quad 
Z_{\mathbb{Z},\nu}(\Gamma'_1,\partial \Gamma'_1) = T(\sigma_1,\sigma_2,\sigma_3;\sigma_4).\] 

We should point out that Bradley and Zhou (\cite{BZ10}) have proven that any Mordell-Tornheim sum with positive integer arguments can be expressed as a rational linear combination of multiple zeta values of the same weight and depth. In fact given a plane tree $\mathrm{T}$ with only one internal vertex and $m$ edges, given a subdivision map $\underline{k} = (\sigma_i)_{1\leq i \leq m}$ and an appropriate sign $\nu$, we will always have 
\[Z_{\mathbb{Z},\nu}(\mathrm{T},\partial \mathrm{T} ) = T(\sigma_1,\sigma_2,\ldots, \sigma_{m-1};\sigma_m). \]
In this sense, Mordell-Tornheim zeta values are special cases of our generalized multiple zeta values associated to special graphs. 
Besides, we give details of proof for Example (\ref{ex:1}) and Example (\ref{ex:1'}) and keep a uniform way of demonstration as also shown in Example (\ref{ex:2}), in order to make readers to pay attention to the appearance of a permutation group acting on the indices.

However when the given graph has more internal vertices, our generalized multiple zeta value is no longer a Mordell-Tornheim zeta value, nor
a generalized Witten zeta value or their generalization- generalized zeta value associated to root systems, defined and studied by
Komori, Matsumoto, and Tsumura. For example, the generalized zeta value associated to $sl(l+1)$ (\cite{MT06}) is 
\[\zeta_{sl(l+1)}(\underline{s}) = \sum^{\infty}_{m_1,\ldots,m_r =1}\prod^{l}_{j=1}\prod^{l-j+1}_{k=1}\left(\sum^{j+k-1}_{t=k}m_{t} \right)^{s_{jk}}.\]

\[\zeta_{sl(4)}(s_1,s_2,s_3,s_4,s_5,s_6) =  \sum^{\infty}_{m_1,m_2,m_3=1}\frac{1}{m^{s_1}_1m^{s_2}_2m^{s_3}_3(m_1 +m_2)^{s_4}(m_2+m_3)^{s_5}(m_1+m_2+m_3)^{s_6}}.\]
They define (section 5 of (\cite{MT06}))
\[\mathcal{T}(s_1,s_2,s_3,s_4,s_5) = \zeta_{sl(4)}(s_1,s_2,s_3,s_4,0,s_5). \]
Then give some evaluation of $\mathcal{T}(s_1,s_2,s_3,s_4,s_5)$ for special $s_1,\ldots, s_5$, which are indeed a linear combination of $3$-depth MZVs. Example (\ref{ex:2}) shows that
\[Z_{I,\nu}(\Gamma_2,\partial \Gamma_2) = G_{I,\nu,\Gamma_{2}, \partial \Gamma_{2}}(\{0\}_{v\in S}, 1) =\sum_{n_{1},n_{2},n_{3}\in \mathbb{N}\smallsetminus\{0\}}\frac{1}{n_{3}^{\sigma_{3}}(n_{1}+n_{2}+n_{3})^{\sigma_{4}}}\frac{1}{n_{1}^{\sigma_{1}}n_{2}^{\sigma_{2}}(n_{1}+n_{2})^{\mu_{1}}}\]
is a finite $\mathbb{Z}$-linear combination of triple zeta values for general $\sigma_i$ and $\mu_1$. And 
\[Z_{I,\nu}(\Gamma_2,\partial \Gamma_2) = \mathcal{T}(\sigma_1,\sigma_2,\sigma_3,\mu_1,\sigma_4).\]
Moreover when the given graph is no longer a plane trivalent tree, our generalized multiple zeta value is not contained in work of Komori, Matsumoto, and Tsumura. Besides our higher plectic Green functions can be related to multiple polylogarithms.  Therefore our construction is quite new.
\end{re}

\vspace{1cm}

\newpage

\vspace{2cm}
\end{document}